\newcommand{\R}{\mathbb{R}}
\newcommand{\N}{\mathbb{N}}
\newcommand{\J}{\mathcal{J}}
\newcommand{\F}{\mathcal{F}}
\newcommand{\T}{\mathcal{T}}
\newtheorem{theorem}{Theorem}[section]
\newtheorem{lemma}[theorem]{Lemma}
\newtheorem{proposition}[theorem]{Proposition}
\newtheorem{corollary}[theorem]{Corollary}
\newtheorem{remark}{Remark}[section]
\theoremstyle{definition}
\numberwithin{equation}{section}
\def\qed{\hfill $\Box$ \smallskip}
\title{ Improved Beckner's inequality for axially symmetric functions on $\mathbb{S}^4$  }
\author{Changfeng Gui}
\address{Changfeng Gui, The University of Texas at San Antonio, TX, USA.
}
\email{changfeng.gui@utsa.edu}
\author{Yeyao Hu}
\address{Yeyao Hu, School of Mathematics and Statistics, HNP-LAMA, Central South University,  Changsha, Hunan 410083, PR China}
\email{huyeyao@gmail.com }
\author{Weihong Xie}
\address{Weihong Xie, School of Mathematics and Statistics, HNP-LAMA, Central South University,  Changsha, Hunan 410083, PR China}
\email{wh.xie@csu.edu.cn}
\begin{document}

\begin{abstract}
   We show that  axially symmetric solutions on $\mathbb{S}^4$ to a constant  $Q$-curvature type equation (it may also be called fourth order mean field equation)  must be constant, provided  that  the parameter $\alpha$  in front of the Paneitz operator belongs to $[\frac{473 + \sqrt{209329}}{1800}\approx0.517, 1)$. This is in contrast to the case $\alpha=1$,  where a family of solutions exist,  known as standard bubbles.  The phenomenon resembles the Gaussian curvature equation on $ \mathbb{S}^2$. As a consequence, we prove an improved Beckner's inequality on $\mathbb{S}^4$ for axially symmetric functions with their centers of mass at the origin. Furthermore, we show uniqueness of axially symmetric solutions when $\alpha=\frac15$ by exploiting Pohozaev-type identities, and prove existence of a non-constant axially symmetric solution for  $\alpha \in (\frac15, \frac12)$ via a bifurcation method.
\end{abstract}

\date{}

\maketitle

{\bf Key words}: Axial Symmetry,  Beckner's inequality, Q-curvature  equation, Paneitz operator, Conformal metrics, Szeg\"o limit theorem,  Bifurcation
\vskip6mm

{\bf 2010 AMS subject classification}: 53A05, 53C18,  53C21,  35J15, 35J35, 35J60,

\vskip6mm

\vskip6mm

{\section{Introduction}}
    We shall consider the constant  $Q$-curvature-type  equation
    \begin{equation}\label{11}
    \alpha P_4 u + 6\left(1-\frac{e^{4u}}{\int_{\mathbb{S}^4}e^{4u}dw}\right)=0, \quad \xi\in \mathbb{S}^4.
    \end{equation}
Here $\mathbb{S}^4$ is the 4-dimensional sphere,
 \begin{equation*}
   P_4=\Delta^2-2\Delta
 \end{equation*}
 is the Paneitz operator on $\mathbb{S}^4$ and $\alpha$ is a positive constant.  The volume form  $dw$ is  normalized so that
$\int_{\mathbb{S}^4} dw=1$.

 The corresponding energy functional is defined in $H^{2}(\mathbb{S}^4)$ as
\begin{equation*}
J_\alpha(u)=\frac{\alpha}{2}\left(\int_{\mathbb{S}^4}|\Delta u|^2dw+2\int_{\mathbb{S}^4}|\nabla u|^2dw\right)+6\int_{\mathbb{S}^4} udw-
\frac{3}{2}\ln \int_{\mathbb{S}^4}e^{4u}dw.
\end{equation*}

When $\alpha =1$, \eqref{11}  corresponds to the constant $Q$-curvature equation on $\mathbb{S}^4$ and it is well known that there holds  the following Beckner's inequality,  a higher order Moser-Trudinger type inequality,
\begin{equation}
\label{beckner}
J_{1} \ge 0, \quad u \in H^2(\mathbb{S}^4).
\end{equation}
Furthermore, $J_1$ is invariant under the
conformal transformation
$$ u(\xi) \to v(\tau \xi)+  \frac14 ln( |det(d\tau)(\xi)|), $$
where $\tau$ is an element of the conformal group of $\mathbb{S}^4$ and  $ |det(\cdot)|$ is the modulus of the corresponding Jacobian determinant. Equality in \eqref{beckner} is only attained at functions of the form
$$
u(\xi)= -\ln(1- \zeta \cdot \xi)+ C, \quad   C  \in \mathbb{R},
$$
where $\zeta \in B^5:=\{ \xi\in \mathbb{R}^5, |\xi| <1\}$.
(See \cite{Beckner}, \cite{Chang95}.)
This in particular implies that \eqref{11} has a family of axially symmetric solutions
$u(\xi)=-\ln(1-a \xi_1), \quad \xi\in \mathbb{S}^4$ for $a\in (-1, 1)$.

On the other hand, an improved Aubin-type inequality holds as shown in \cite[Lemma 4.6]{Chang95} or \cite[Lemma 4.3]{BCY} : for any $\alpha>1/2$, there exists a constant $C(\alpha)\ge 0$ such that  $J_{\alpha}(u) \ge -C(\alpha)$ if  $u$ belongs to  the  set  of  functions  with center of mass at the origin
 \begin{equation*}
   \mathfrak L=\{v\in H^2(\mathbb{S}^4):\int_{\mathbb{S}^4}e^{4v}\xi_jdw=0;\ j=1,2,\cdots,5\}.
 \end{equation*}

This leads to the existence of a minimizer $u_0$ of $J_{\alpha} $ in $ \mathfrak L$, and $u_0$ satisfies
the corresponding Euler-Lagrange equation
 \begin{equation}\label{lagrange}
    \alpha P_4 u+6\left(1-\frac{e^{4u}}{\int_{\mathbb{S}^4}e^{4u}dw}\right)= \sum_{i=1}^{5} a_i \xi_i e^{4u} , \quad \hbox{in} \quad  \mathbb{S}^4
    \end{equation}
  for some constants $a_i, i=1, 2, \cdots 5$.

 Equation \eqref{11} can be regarded as the following 4-dimensional counterpart of the constant Gaussian curvature type equation,  or the mean field equation on $ \mathbb{S}^2$,
  \begin{equation}\label{gausian}
    -\alpha \Delta u + \left(1-\frac{e^{2u}}{\int_{ \mathbb{S}^2} {e^{2u}}dw}\right)=0, \quad \xi\in  \mathbb{S}^2.
    \end{equation}
  For \eqref{gausian},  there is a vast literature. See, e.g., \cite{CY87}, \cite{GM} and references therein.

Similar to the prescribing Gaussian curvature equation on $ \mathbb{S}^2$, the Kazdan-Warner obstruction also works for the prescribing $Q$-curvature equation
 \begin{equation*}
    P_4 u+6-Q e^{4u}=0, \quad \xi \in   \mathbb{S}^4.
    \end{equation*}
Indeed, it is shown in \cite[Remarks (3) (ii)  for Cor. 5.4]{Chang95}  and  \cite[Cor. 2.1]{CY87},  by using the invariance of $J_1$ under the conformal transformation,   that  the following Kazdan-Warner condition holds
 \begin{equation}\label{kw}
    \int_{\mathbb{S}^4} \langle\nabla Q, \nabla \xi_i\rangle e^{4u}dw=0,  \quad i=1, 2, \cdots 5.
    \end{equation}

It is an immediate consequence that $a_i=0,  i=1, 2, \cdots 5$  in \eqref{lagrange}, just as in the $ \mathbb{S}^2$ case in \cite{CY87}. (See also \cite{Wei-Xu}, proof of  Theorem 2.6.) Interested reader is referred to  \cite{Chang97,DHL00,Dj08,Hang16,Hang163,Ly19,Lin98,M06,Wei99} for literature  on equations that have conformal structure.
\par
 In what follows, we shall consider axially symmetric functions that are only dependent on $\xi_1$. We shall show that \eqref{11} under axially symmetric setting admits only constant solutions when $\alpha \in [\frac{473 + \sqrt{209329}}{1800}, 1)$. As a consequence we obtain an improved Aubin-type inequality for axially symmetric
functions in $ \mathfrak L$.

  Considering solutions axially symmetric about $\xi_1$-axis and denoting $\xi_1$ by $x$, we can reduce \eqref{11}  to
 \begin{equation}\label{15}
  \alpha \left[  (1-x^2)^2u'\right]'''+6-\frac{8e^{4u}}{\int_{-1}^1(1-x^2)e^{4u}}=0,\quad x\in (-1,1)
 \end{equation}
or equivalently,
    \begin{equation*}
  \alpha  \left[(1-x^2)^3u''\right]''-6  \alpha \left[(1-x^2)^2u'\right]'
  +6(1-x^2)-\frac{8(1-x^2)e^{4u}}{\int_{-1}^1(1-x^2)e^{4u}}=0.
 \end{equation*}
One can refer to Section 2 for the detailed derivation of \eqref{15}. By direct computations, we see that the corresponding functional $ I_\alpha(u)$  can be expressed  as follows
 \begin{equation*}
 \begin{aligned}
     I_\alpha(u)&=\frac{\alpha}{2}\int_{-1}^1 \left((1-x^2)|(1-x^2)u''|^2+6|(1-x^2)u'|^2\right)\\
     &+6\int_{-1}^1 (1-x^2) u-2\ln\left(\frac{3}{4}\int_{-1}^1(1-x^2)e^{4u} \right).
 \end{aligned}
\end{equation*}
Here the function space is $H^2(-1,1)$,  which is the restriction of  $H^2(\mathbb{S}^4)$ in  the set of functions axially symmetric about $\xi_1$-axis and $\xi_1=x$.

The set $\mathfrak L$ is replaced  by
\begin{equation}\label{eq:szego}
\mathfrak L_r=\bigg\{u\in H^2(\mathbb{S}^4):u=u(x) \mbox{ and } \int_{-1}^1 x(1-x^2)e^{4u} =0\bigg\}.
\end{equation}


 Now we state the main results.
  \begin{theorem}\label{main1}
If $\frac{473 + \sqrt{209329}}{1800}\leq \alpha<1$, then  \eqref{15} admits only constant solutions.  As an immediate consequence,  we have
$$\inf_{u\in \mathfrak L_r}I_\alpha(u)=0.
$$
 \end{theorem}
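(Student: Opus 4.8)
The plan is to prove the rigidity statement (only constant solutions of \eqref{15} for $\alpha$ in the stated range) and then derive the infimum as an immediate corollary. For the corollary: by the improved Aubin-type inequality of \cite{Chang95,BCY} applied in the axially symmetric setting, $I_\alpha$ is bounded below on $\mathfrak L_r$ and, since the direct method applies once coercivity is in hand, a minimizer $u_0\in\mathfrak L_r$ exists. This $u_0$ satisfies the Euler--Lagrange equation with Lagrange multipliers, but the Kazdan--Warner obstruction \eqref{kw} forces those multipliers to vanish, so $u_0$ solves \eqref{15}. By the rigidity theorem $u_0$ is constant, hence $I_\alpha(u_0)=0$; since constants lie in $\mathfrak L_r$ and give value $0$, the infimum is exactly $0$.

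The core is therefore the rigidity claim. The approach I would take is a spectral/linearization-plus-energy argument adapted from the $\mathbb{S}^2$ mean field literature (cf. \cite{GM}) to the fourth-order operator on $\mathbb{S}^4$. First I would normalize: given an axially symmetric solution $u$, write $v=4u$ and $f=\tfrac{8e^{4u}}{\int_{-1}^1(1-x^2)e^{4u}}$, so that $\int_{-1}^1(1-x^2)f\,dx = 8$ and \eqref{15} becomes $\alpha[(1-x^2)^2 u']''' = f - 6$. Next I would test this equation against carefully chosen functions. The natural test functions are built from the Gegenbauer/ultraspherical polynomials that diagonalize the operator $L u := -[(1-x^2)^2 u']'$ (equivalently the restriction of $-\Delta$ on $\mathbb{S}^4$ to axially symmetric functions), whose eigenvalues are $\lambda_k = k(k+3)$, $k\ge 0$. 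Multiplying \eqref{15} by $u$ and integrating gives the basic identity $\alpha\int (1-x^2)|(1-x^2)u''|^2 + 6\alpha\int|(1-x^2)u'|^2 = \int (1-x^2) f\, u - 6\int (1-x^2) u$, i.e. a relation between the $H^2$-type Dirichlet energy of $u$ and the coupling term $\int(1-x^2)fu$. To control the coupling term one uses a sharp inequality: because $\int(1-x^2)f = 8$ and $f\ge 0$, one has a Jensen/convexity bound and, more importantly, a \emph{spectral gap} improvement — projecting $u$ onto the first nontrivial eigenmode (proportional to $x$) is exactly what the center-of-mass constraint in $\mathfrak L_r$ eliminates, so the effective lowest eigenvalue on the orthogonal complement is $\lambda_2 = 10$ rather than $\lambda_1 = 4$. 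Combining the quadratic lower bound $\alpha(\lambda_k^2 + 6\lambda_k)\ge \alpha(\lambda_2^2+6\lambda_2)=160\alpha$ on Dirichlet energy with the linear growth of the coupling term, and optimizing the resulting scalar inequality in the amplitude of $u$, produces a threshold $\alpha_*$ below which the only possibility is $u\equiv\text{const}$; the quadratic $1800\alpha^2 - 473\alpha - \text{(const)} \ge 0$ whose larger root is $\frac{473+\sqrt{209329}}{1800}$ should emerge precisely from this optimization (the discriminant $209329 = 473^2 + 4\cdot 1800\cdot(\text{lower-order constant})$ pins down the missing constant).

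The main obstacle, and where the real work lies, is making the estimate on the coupling term $\int_{-1}^1 (1-x^2) f\, u\, dx$ sharp enough. A crude Moser--Trudinger/Jensen bound will only recover $\alpha > 1/2$ (the classical Aubin threshold); to push down to $\approx 0.517$ one needs a second-order refinement that tracks the next Fourier mode as well, in the spirit of the sphere-covering or improved-Moser--Trudinger inequalities. Concretely I expect to expand $e^{4u}$ to second order around its mean, control the error via the $H^2$ norm, and feed the resulting inequality — which now involves both $\|u\|$ and a higher moment — back into the energy identity; the two-parameter optimization that follows is what generates the quadratic in $\alpha$. A secondary technical point is verifying that all integrations by parts are legitimate, i.e. that axially symmetric $H^2(\mathbb{S}^4)$ solutions are smooth on $(-1,1)$ with the right decay of $(1-x^2)$-weighted derivatives at $x=\pm1$; this follows from elliptic regularity for \eqref{11} on the sphere but should be stated carefully so the boundary terms in $[(1-x^2)^2u']'''$ genuinely vanish.
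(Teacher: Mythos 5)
Your derivation of the ``immediate consequence'' (existence of a constrained minimizer for $\alpha>\tfrac12$, vanishing of the Lagrange multiplier via Kazdan--Warner, then rigidity forcing the infimum to be $0$) matches what the paper intends. The gap is in the rigidity claim itself, which is the entire content of the theorem. Your mechanism --- test \eqref{15} against $u$, bound the coupling term $\int(1-x^2)\frac{e^{4u}}{\gamma}u$ by a Jensen-type estimate plus a spectral gap coming from the center-of-mass constraint, then a ``second-order expansion of $e^{4u}$'' and a two-parameter optimization --- does not produce the threshold and, more importantly, misidentifies the obstruction. The theorem concerns \emph{all} axially symmetric solutions, and for those the center of mass and even $\int_{\mathbb S^4}u\,\xi_i\,dw$ vanish automatically when $\alpha\neq1$ (Proposition \ref{pro31}); this does not help, because the dangerous mode is not the first spherical harmonic of $u$ but the coefficient $\beta$ of $F_1=x$ in the expansion \eqref{decomp} of the gradient quantity $G=(1-x^2)u'$, which is nonzero for the bubble family $-\ln(1-ax)$ at $\alpha=1$. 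Any sharp bound on $\int(1-x^2)e^{4u}u$ of the kind you describe is essentially an improved Beckner inequality, i.e.\ the statement you are trying to prove, so the argument as sketched is circular at the decisive step.

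What the paper actually does, and what is missing from your proposal, is: (i) pass to the equation \eqref{27} for $G$ and prove the pointwise a priori bound $G'(x)\le\frac1\alpha$ (Lemma \ref{132}) by evaluating the ODE at an interior maximum of $G'$ and by a careful expansion at the poles; (ii) test \eqref{27} against $G$ itself, where a cubic term $\int(1-x^2)^2(G')^3$ appears and is absorbed precisely thanks to the pointwise bound, yielding the quadratic-form inequality \eqref{keyestimate}; (iii) combine this with the exact identities \eqref{220}--\eqref{223} and the Gegenbauer coefficients' bounds \eqref{229}, assume $\beta\neq0$, and run an iterative spectral refinement (the inequalities \eqref{242} with the family $f_n$ in \eqref{fn}, checked numerically for $n=3,4$) to reach a contradiction. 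The constant $\frac{473+\sqrt{209329}}{1800}$ is not the outcome of an amplitude optimization: it is the larger root of $f_5(\alpha)=0$, i.e.\ of $1800\alpha^2-946\alpha+8=0$ (note $209329=473^2-4\cdot1800\cdot2$, so your guessed form of the discriminant has the wrong sign), and it enters through the eigenvalue $\lambda_6=54$ in this bootstrap. Without the pointwise gradient estimate and the treatment of the cubic term, your energy identity gives no classification for any $\alpha<1$, so the proposal as written cannot be completed along the lines indicated.
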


We conjecture that Theorem \ref{main1} holds for  $\frac{1}{2}<\alpha<1$.   Indeed, the lower bound $\frac{473 + \sqrt{209329}}{1800}$ can be improved slightly to $0.5145$ (see discussions in Section 6).  We believe
that $J_{1/2}(u) \ge 0$ for $u\in \mathfrak L $, given the similar inequality for $ \mathbb{S}^2$ as shown in \cite{GM}.

 Now we define the following first momentum functionals on $H^{2}(\mathbb{S}^4)$
\begin{equation*}
\J_\alpha(u)=\frac{\alpha}{2} \int_{\mathbb{S}^4} u (P_4 u) dw+6\int_{\mathbb{S}^4} udw-
\frac{3}{4}\ln \left( (\int_{\mathbb{S}^4}e^{4u}dw)^2 -\sum_{i=1}^5 (\int_{\mathbb{S}^4}e^{4u} \xi_i dw)^2 \right).
\end{equation*}
Note that $\J_{\alpha} (u)= J_{\alpha}(u)$ when $ u \in \mathfrak L$.

As a consequence of Theorem \ref{main1},  we have the following
form of first Szeg\"o limit theorem on $ \mathbb{S}^4$ for axially symmetric functions.

 \begin{theorem}\label{Szego}

$$ \J_{\frac45} (u) \ge 0, \quad \forall u \in \{ u\in H^{2}(\mathbb{S}^4):  u(\xi)=u(\xi_1)\}.
$$
 \end{theorem}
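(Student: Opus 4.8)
The plan is to deduce $\J_{4/5}(u)\ge 0$ for axially symmetric $u$ from Theorem \ref{main1} applied at $\alpha = 4/5$ (which lies in the admissible range since $\tfrac{473+\sqrt{209329}}{1800}\approx 0.517 < 0.8 < 1$). The key observation is that for $u=u(\xi_1)$ one has $\int_{\mathbb S^4}e^{4u}\xi_j\,dw = 0$ automatically for $j=2,\dots,5$ by oddness in those variables, so the logarithmic term in $\J_\alpha$ collapses to $-\tfrac34\ln\big((\int e^{4u})^2 - (\int e^{4u}\xi_1)^2\big)$, involving only the first moment $m:=\int_{\mathbb S^4}e^{4u}\xi_1\,dw$. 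Thus on the axially symmetric class, $\J_\alpha$ differs from $I_\alpha$ (after the standard reduction to the interval, using $dw$-normalization and the identification $\xi_1=x$) only through this modified moment term.

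The main step is a \emph{moving-plane / conformal-balancing} argument: given any axially symmetric $u$, I would produce a conformal transformation $\tau_a$ of $\mathbb S^4$ along the $\xi_1$-axis (the one-parameter family $\xi\mapsto$ the Möbius maps fixing $\pm e_1$) and set $\tilde u = u\circ\tau_a + \tfrac14\ln|\det d\tau_a|$, choosing the parameter $a\in(-1,1)$ so that $\int_{\mathbb S^4}e^{4\tilde u}\xi_1\,dw = 0$, i.e. $\tilde u\in\mathfrak L_r$. That such an $a$ exists follows from a continuity/intermediate-value argument: the first moment of $e^{4\tilde u}$ is a continuous function of $a$ that tends to $\pm 1$ (the moments of Dirac-type concentrations at the poles) as $a\to\mp 1$, hence vanishes somewhere. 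Crucially, $\tilde u$ remains axially symmetric about the same axis because $\tau_a$ preserves that axis. Then Theorem \ref{main1} at $\alpha = 4/5$ gives $I_{4/5}(\tilde u)\ge 0$, equivalently $J_{4/5}(\tilde u)\ge 0$ since $\tilde u\in\mathfrak L_r\subset\mathfrak L$.

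It then remains to check the conformal invariance bookkeeping: $J_1$ is conformally invariant by the statement in the introduction, and a direct computation (the Paneitz operator on $\mathbb S^4$ transforms covariantly, and the linear term $6\int u\,dw$ plus the $\ln\int e^{4u}$ term absorb the Jacobian exactly) shows that the \emph{full} functional $\J_\alpha$ — with the symmetrized moment term — is invariant under $u\mapsto u\circ\tau_a+\tfrac14\ln|\det d\tau_a|$ for every $\alpha$, not just $\alpha=1$; indeed this is precisely why $\J_\alpha$ is defined with that determinant-type expression rather than $\int e^{4u}$ alone. Hence $\J_{4/5}(u) = \J_{4/5}(\tilde u) = J_{4/5}(\tilde u) = I_{4/5}(\tilde u) \ge 0$, and the infimum $0$ is attained at constants.

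The part I expect to be the main obstacle is the invariance of $\J_\alpha$ under the axial conformal group for general $\alpha$: one must verify that the quadratic Paneitz term $\tfrac\alpha2\int u P_4 u\,dw$ is \emph{not} invariant individually, but that its variation along the conformal family is exactly cancelled by the variation of $6\int u\,dw - \tfrac34\ln(\cdots)$ — a cancellation that works only because the $\ln$ is of the symmetrized quantity $(\int e^{4u})^2 - \sum_i(\int e^{4u}\xi_i)^2$, which is the $\tau_a$-equivariant replacement for $(\int e^{4u})^2$. This is the content that makes the reduction to $\mathfrak L_r$ legitimate, and it is the computational heart of the argument; once it is in place, the theorem follows from Theorem \ref{main1} with no further work. \qed
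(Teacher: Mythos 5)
Your overall strategy is the paper's: conformally normalize an axially symmetric $u$ along the $\xi_1$-axis so that the first moment of $e^{4\cdot}$ vanishes, apply Theorem \ref{main1} at $\alpha=\frac45\in[\frac{473+\sqrt{209329}}{1800},1)$, and transfer the conclusion back through an invariance property of $\J_{\frac45}$. But the invariance statement you rely on -- that $\J_\alpha$ is invariant under the standard lift $u\mapsto u\circ\tau_a+\frac14\ln|\det d\tau_a|$ \emph{for every} $\alpha$ -- is false, and it is exactly the step you flag as the computational heart. First, it cannot hold for two distinct values of $\alpha$: subtracting would make $\int_{\mathbb{S}^4} u\,P_4u\,dw$ alone invariant, which fails (take $u=0$; the transformed function is a nonconstant multiple of $\ln|\det d\tau_a|$, so its Paneitz energy is strictly positive). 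Second, under the weight $\frac14$ even the symmetrized logarithmic term is not invariant: since $e^{4v}dw$ is then the push-forward of $e^{4u}dw$, the moments become $\int e^{4u}(\tau_a^{-1})_i\,dw$ with $(\tau_a^{-1})_1=\frac{a+\xi_1}{1+a\xi_1}$ fractional linear rather than linear; testing with mass $p$ concentrated near $\xi_1=1$ and mass $q$ near the equator gives $(p+q)^2-(p+qa)^2\neq(p+q)^2-p^2$ for $a\neq0$.

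What the paper actually proves (Proposition \ref{invariance}) is invariance of $\J_{\frac45}$ only, and under a \emph{modified} lift $v=u\circ\phi_t+\frac{5}{4^2}\ln|\det d\phi_t|$, with weight $\frac{5}{16}$ rather than $\frac14$. The extra factor $|\det d\phi_t|^{\frac14}$ in $e^{4v}$ precisely cancels the denominator $1+a\tilde\xi_1$, so the five-vector of moments $\bigl(\int e^{4v}dw,\int e^{4v}\xi_i dw\bigr)$ transforms \emph{linearly} (a Lorentz-type boost in the $(1,\xi_1)$ variables), which is why the quadratic form $(\int e^{4v}dw)^2-\sum_i(\int e^{4v}\xi_i dw)^2$ is preserved; and with this weight the invariant kinetic part is $\frac{2}{5}\int u\,P_4u\,dw+6\int u\,dw$ (a Chang--Yang Theorem 4.1(a)-type computation), which is what pins down $\alpha=\frac45=\frac{n}{n+1}$. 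So the restriction to $\alpha=\frac45$ is forced by the invariance, not incidental, and your argument as written would ``prove'' $\J_\alpha\ge0$ for all admissible $\alpha$ under the wrong transformation. A minor simplification once the correct lift is used: the centering parameter is explicit, $a=-\int e^{4u}\tilde\xi_1\,dw/\int e^{4u}dw\in(-1,1)$, so no intermediate-value argument is needed; and your observation that the centered function remains axially symmetric (so that Theorem \ref{main1} applies in $\mathfrak L_r$) is correct and needed.
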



Concerning the classification of axially symmetric solutions at another critical parameter $\alpha=\frac15$, we have the following theorem.
  \begin{theorem}\label{main2}
  If $\alpha=\frac{1}{5}$   and $u$ is an axially symmetric solution to \eqref{11}, then $u$ must be constant.
  \end{theorem}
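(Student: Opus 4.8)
The plan is to exploit Pohozaev-type identities for the reduced ODE \eqref{15} at the special value $\alpha=\frac15$, in the spirit of the classical argument for the mean-field equation on $\mathbb{S}^2$. First I would rewrite \eqref{15} in a convenient divergence form using the variable $x=\xi_1$ and set $f=e^{4u}/\int_{-1}^1(1-x^2)e^{4u}\,dx$ (up to the normalization constant), so that the equation reads $\alpha\,[(1-x^2)^2 u']''' = -6 + 8(1-x^2)f/(1-x^2)$, with the natural boundary behavior forcing $(1-x^2)^2u'$ and its derivatives to vanish suitably at $x=\pm1$ (these boundary conditions come from the requirement that $u\in H^2(\mathbb{S}^4)$ is smooth at the poles; I would record them carefully as a lemma, since they are what makes the boundary terms in the integrations by parts disappear). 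The point of choosing $\alpha=\frac15$ is that it is precisely the eigenvalue at which a certain quadratic Pohozaev functional degenerates, so that multiplying the equation by well-chosen test functions (powers of $x$, $(1-x^2)u'$, and combinations thereof) and integrating yields identities with a definite sign.

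Concretely, the key steps, in order: (i) establish the boundary/regularity conditions at $x=\pm1$ for any axially symmetric $H^2$ solution, including that $e^{4u}$ and its relevant derivatives are bounded near the poles; (ii) multiply \eqref{15} by $1$ and integrate to recover the normalization $\int_{-1}^1(1-x^2)e^{4u}\,dx$ in terms of the total, i.e.\ the consistency relation $\int_{-1}^1[6-8f(1-x^2)/(1-x^2)]\,dx$-type balance; (iii) multiply by the first-order test functions associated with the conformal vector fields on $\mathbb{S}^4$ — this is exactly the Kazdan–Warner/center-of-mass relation \eqref{kw}, which here says $\int_{-1}^1 x(1-x^2)e^{4u}\,dx$ is controlled; (iv) the main step: multiply \eqref{15} by a carefully chosen quadratic expression in $u'$ and $x$ (the Pohozaev multiplier, something like $(1-x^2)u'$ plus lower-order corrections), integrate by parts repeatedly using the boundary conditions from step (i), and combine with steps (ii)–(iii) to obtain an identity of the form
\begin{equation*}
c_1\int_{-1}^1 (1-x^2)\,G(u,u',u'')\,dx + c_2\Big(\int_{-1}^1 (1-x^2)e^{4u}\,x\,dx\Big)^2 = 0,
\end{equation*}
where at $\alpha=\frac15$ both $c_1,c_2$ have the same sign and $G\ge 0$ (a sum of squares); this forces $u'\equiv 0$.

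The main obstacle I anticipate is step (iv): finding the exact Pohozaev multiplier that makes the bulk term a perfect nonnegative combination. For a fourth-order ODE the integration by parts generates many cross terms, and only for the single value $\alpha=\frac15$ will the algebra collapse to a sign-definite expression — so the computation is delicate and one must track the boundary contributions at $x=\pm 1$ with care (they are the place where an $H^2$ but not more regular solution could in principle spoil the argument, which is why step (i) must be done first and cleanly). A secondary subtlety is that, unlike the $\mathbb{S}^2$ case, one may need to use \emph{two} independent Pohozaev identities (one from a "scaling"-type multiplier and one from a "translation"-type multiplier adapted to $x$) and take a suitable linear combination; identifying that combination, and verifying that the resulting quadratic form in the moments $\int (1-x^2)^k e^{4u}x^j\,dx$ is positive semidefinite exactly at $\alpha=\tfrac15$, is where the real work lies. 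Once $u'\equiv0$ is obtained, $u$ is constant and, plugging back into \eqref{15}, the constant is forced to be $0$, completing the proof.
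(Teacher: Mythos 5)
Your overall strategy (a Pohozaev-type identity that degenerates precisely at $\alpha=\tfrac15$) is the right one, but the decisive ingredient is exactly the step you leave open, and the structure you guess for the final identity is not the one that works. The paper's proof uses the multiplier $\langle\nabla u,\nabla F_2\rangle$ with $F_2=\tfrac14(5x^2-1)$ the \emph{second} Gegenbauer harmonic (so, up to a constant, $x(1-x^2)u'$), together with testing the equation against $F_2$ itself and using $P_4F_2=\lambda_2(\lambda_2+2)F_2=120F_2$, which gives $\int_{\mathbb{S}^4}uF_2\,dw=\tfrac{1}{20\alpha}\int_{\mathbb{S}^4}e^{4u}F_2\,dw$. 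Combining the two yields
\begin{equation*}
\int_{-1}^1\bigl|\bigl((1-x^2)^2u'\bigr)'\bigr|^2
=\frac{4}{5\alpha}\Bigl(5-\frac{1}{\alpha}\Bigr)\int_{-1}^1(1-x^2)e^{4u}F_2 ,
\end{equation*}
and the whole point of $\alpha=\tfrac15$ is that the scalar factor $5-\tfrac1\alpha$ vanishes, so the left-hand side (a single perfect square) is zero with \emph{no} sign information needed about the moment $\int(1-x^2)e^{4u}F_2$ — which is fortunate, because that moment has no definite sign. Your proposed identity $c_1\int(1-x^2)G+c_2\bigl(\int(1-x^2)e^{4u}x\bigr)^2=0$ with $c_1,c_2$ of the same sign has the wrong shape: the relevant exponential moment is the second one against $F_2$, it enters linearly with a vanishing coefficient rather than as a squared first moment, and an argument hinging on sign-definiteness of that term could not be completed. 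Likewise, your steps (ii)–(iii) (normalization balance and Kazdan–Warner) are not needed here — in the paper the Kazdan–Warner condition \eqref{kw} is used only for Proposition \ref{pro31} — and no linear combination of two independent Pohozaev identities or positive-semidefiniteness check on a quadratic form of moments is required.

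Once the identity is in hand, the endgame you sketch is fine and matches the paper: $((1-x^2)^2u')'\equiv0$ gives $(1-x^2)^2u'\equiv C$, boundedness of $(1-x^2)u'$ at $x=\pm1$ forces $C=0$, hence $u'\equiv0$. Note also that the identity above is equivalent to \eqref{223}, which the paper already derives in Lemma \ref{equality} by the elementary device of multiplying \eqref{27} by $x$ and integrating by parts — so the ``delicate search for the exact multiplier'' you anticipate reduces to a short computation with the second spherical harmonic, and your worry about needing careful boundary lemmas is largely dissolved by working with $G=(1-x^2)u'$ and $(1-x^2)^2u'$, which vanish at the poles automatically.
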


 Using a bifurcation approach and Theorem \ref{main1}-\ref{main2}, we can also show the existence of nonconstant axially symmetric solution for $\alpha \in (\frac15, \frac12).$

 \begin{theorem}\label{main3}
 There exists a non constant  solution $u_{\alpha}$ to \eqref{15} for $\alpha \in (\frac15, \frac12).$   Moreover, there exists a sequence $\alpha_m \in (\frac15,\frac{473 + \sqrt{209329}}{1800}) $ and a sequence of non constant solutions $u_{\alpha_m}, m=1, 2, \cdots$ to \eqref{15}  such that $\alpha_m \to \frac12$,  $\int_{-1}^1 (1-x^2)e^{4u_{\alpha_m}} =\frac43 $ and  $\|u_{\alpha_m}\|_{L^\infty([-1,1])} \to \infty$ as $m \to \infty$.
 \end{theorem}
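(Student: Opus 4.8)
The plan is to build the non-constant solutions by a global bifurcation argument from the branch of constant solutions of \eqref{15}, treating $\alpha$ as the bifurcation parameter. First I would rewrite \eqref{15} in the form $F(\alpha,u)=0$ where $F:\R\times X\to Y$ is a smooth map between suitable weighted Sobolev (or Hölder) spaces adapted to the degenerate operator $L u:=\big[(1-x^2)^2u'\big]'''$, with $u\equiv 0$ (equivalently, constant) always solving $F(\alpha,0)=0$ after normalizing $\int_{-1}^1(1-x^2)e^{4u}=\tfrac43$. Linearizing at a constant solution gives, after separating into the eigenfunctions $e_k$ of the reduced Paneitz operator on axially symmetric functions (these are Gegenbauer-type polynomials, with eigenvalues $\mu_k=k(k+3)(k-1)(k+2)/\,\text{(normalization)}$ for $k\ge 2$, together with the kernel directions $k=0,1$), the condition that the linearization be singular. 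The first nontrivial crossing occurs when $\alpha$ equals the value $\alpha_*$ for which $\alpha_*\mu_2$ matches the coefficient coming from the linearization of the exponential nonlinearity; a direct computation identifies this value as $\alpha_*=\tfrac12$. At $\alpha=\tfrac12$ the kernel of the linearized operator (restricted to the subspace orthogonal to the $k=0,1$ modes, which are handled by the center-of-mass normalization $u\in\mathfrak L_r$) is one-dimensional, spanned by $e_2$, and the transversality (Crandall–Rabinowitz) condition $\partial_\alpha D_uF(\tfrac12,0)[e_2]\notin \mathrm{Range}\,D_uF(\tfrac12,0)$ holds because $\mu_2\neq 0$. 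Hence a local branch of non-constant solutions bifurcates at $\alpha=\tfrac12$.

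Next I would upgrade this to a global branch using Rabinowitz's global bifurcation theorem: the connected component $\mathcal C$ of the solution set containing $(\tfrac12,0)$ is either unbounded in $\R\times X$, or returns to another bifurcation point on the trivial branch, or leaves the parameter interval. A nodal-count argument (the bifurcating solutions inherit exactly the sign-change pattern of $e_2$, and this is preserved along $\mathcal C$ because zeros of solutions to the fourth-order ODE \eqref{15} cannot be created or destroyed along a continuum away from the trivial branch — here I use that the other linearized eigenvalues $\alpha\mu_k$ for $k\ge 3$ never vanish for $\alpha$ in the relevant range, so no secondary bifurcation off the trivial branch with a different nodal type can occur in $(\tfrac15,\tfrac12)$) shows $\mathcal C$ cannot reconnect to the trivial branch within $(\tfrac15,\tfrac12)$. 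Therefore, for every $\alpha$ in an interval to the left of $\tfrac12$, $\mathcal C$ provides a non-constant solution $u_\alpha$; I would then argue $\mathcal C$ extends all the way across $(\tfrac15,\tfrac12)$ by invoking Theorem \ref{main1} (which forbids $\mathcal C$ from crossing into $[\tfrac{473+\sqrt{209329}}{1800},1)$, forcing it to stay on the low-$\alpha$ side) and Theorem \ref{main2} (which pins down the behavior at the lower endpoint $\alpha=\tfrac15$, where again only constants solve \eqref{11}, so $\mathcal C$ cannot terminate at a non-constant solution there either). This traps $\mathcal C$ so that its projection onto the $\alpha$-axis covers $(\tfrac15,\tfrac12)$, giving the first assertion.

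For the blow-up statement, I would extract a sequence $(\alpha_m,u_{\alpha_m})\in\mathcal C$ with $\alpha_m\to\tfrac12$ and $\alpha_m<\tfrac{473+\sqrt{209329}}{1800}$; since these are non-constant while Theorem \ref{main1} shows the only solutions at $\alpha=\tfrac12^+$ near the trivial branch would be constant (and more to the point, a compactness/elliptic-estimates argument shows that a bounded sequence of non-constant solutions with $\alpha_m\to\tfrac12$ would converge to a non-constant solution at $\alpha=\tfrac12$, contradicting the conjectured-but-here-provable rigidity at the endpoint of the branch — specifically, if $\|u_{\alpha_m}\|_{L^\infty}$ stayed bounded, standard bootstrap on \eqref{15} would give $C^4$ bounds and a convergent subsequence to a non-constant solution $u_{1/2}$, but the transversality at $\tfrac12$ together with the local uniqueness part of Crandall–Rabinowitz shows the branch near $(\tfrac12,0)$ consists only of the bifurcating curve, along which $u\to 0$; combined with Theorem \ref{main1} there is no non-constant solution at or just above $\tfrac12$), we conclude $\|u_{\alpha_m}\|_{L^\infty([-1,1])}\to\infty$. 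The normalization $\int_{-1}^1(1-x^2)e^{4u_{\alpha_m}}=\tfrac43$ is maintained throughout because it was built into the definition of $F$.

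The main obstacle I anticipate is \emph{not} the local bifurcation (which is a routine Crandall–Rabinowitz computation once the eigenvalues of the reduced Paneitz operator are written down) but rather controlling the \emph{global} branch: ruling out secondary bifurcations and reconnections so that $\mathcal C$ genuinely sweeps out all of $(\tfrac15,\tfrac12)$, and — relatedly — establishing the blow-up rather than, say, the branch turning back. Both hinge on combining the a priori rigidity Theorems \ref{main1} and \ref{main2} as "barriers" at the two ends of the interval with a careful Leray–Schauder degree / nodal-preservation argument for the degenerate fourth-order ODE \eqref{15}; handling the degeneracy of $(1-x^2)^2$ at $x=\pm1$ in the functional-analytic setup (choice of weighted spaces in which $L$ is Fredholm of index zero) is the technical point that needs the most care.
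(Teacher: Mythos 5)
Your construction starts from a bifurcation point that does not exist. Linearizing \eqref{11} at a constant solution gives $\alpha P_4\phi=24\phi$ on mean-zero axially symmetric functions, and the eigenvalues of $P_4$ are $\lambda_k(\lambda_k+2)=k(k+1)(k+2)(k+3)$, so the crossings on the trivial branch occur at $\alpha_k=\frac{24}{k(k+1)(k+2)(k+3)}=1,\ \tfrac15,\ \tfrac1{15},\dots$; the value $\alpha=\tfrac12$ is never an eigenvalue crossing, and the $k=2$ mode (your $e_2$) crosses at $\alpha=\tfrac15$, not $\tfrac12$. Hence the Crandall--Rabinowitz step you propose at $(\tfrac12,0)$ with kernel spanned by $e_2$ fails, and with it the whole global continuation scheme. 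The paper instead bifurcates at $\alpha=\tfrac15$ (in the variable $\rho=6/\alpha$, at $\rho_2=30$, where the bifurcation is shown to be transcritical with $\rho_2'(0)=-20$), and the value $\alpha=\tfrac12$ enters for an entirely different reason: by Malchiodi's compactness theorem, solutions of \eqref{41} can blow up in $L^\infty$ only at $\rho=6k$, and an axially symmetric solution can concentrate at most at the two poles, so the only admissible blow-up level in the relevant range is $\rho=12$, i.e.\ $\alpha=\tfrac12$. The branch $\mathcal B^-_2$ emanating from $(\rho_2,0)$ is trapped in $\rho\in\bigl(\tfrac{6\times 1800}{473+\sqrt{209329}},30\bigr)$ by Theorems \ref{main1} and \ref{main2}, cannot reconnect to the trivial branch (no other $\rho_m$ lies in that interval), so by the Rabinowitz alternative it is unbounded, and by the compactness statement it can only be unbounded along a sequence $\rho^{(m)}\to 12$; this simultaneously yields solutions for every $\alpha\in(\tfrac15,\tfrac12)$ and the blow-up sequence with $\alpha_m\to\tfrac12$.

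Your argument for the blow-up is also circular at the endpoint: you appeal to ``provable rigidity at $\alpha=\tfrac12$'' to exclude a bounded limit, but Theorem \ref{main1} only covers $\alpha\ge\frac{473+\sqrt{209329}}{1800}\approx 0.517$, and rigidity for $\alpha\in(\tfrac12,0.517)$ or at $\alpha=\tfrac12$ is explicitly left as a conjecture in the paper, so no contradiction is available from a convergent subsequence at $\tfrac12$. Likewise, your nodal-count argument addresses reconnection but gives neither coverage of the full interval $(\tfrac15,\tfrac12)$ nor divergence of $\|u_{\alpha_m}\|_{L^\infty}$; the missing ingredient in both places is precisely the quantized-compactness input (blow-up only at $\rho=6k$, with $k\le 2$ in the axially symmetric setting) on which the paper's proof of Theorem \ref{main3}, via Theorem \ref{main-bifur}, rests.
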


We also establish the following proposition concerning the centers of mass and first order momentums of solutions to \eqref{11}.
\begin{proposition}\label{pro31}
  If $u$ solves \eqref{11}, then
  \begin{equation*}
    \int_{\mathbb{S}^4}  e^{4u}  \xi_idw=0 \quad \mbox{ and } \int_{\mathbb{S}^4}  u  \xi_idw=0, \quad i=1,2,\cdots,5,
  \end{equation*}
whenever $\alpha\neq1$.
\end{proposition}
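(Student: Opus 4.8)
The plan is to exploit the conformal invariance of the $\alpha=1$ functional together with the Kazdan–Warner type identity \eqref{kw} to force the vanishing of the momenta, and then leverage the structure of the equation to pass from the vanishing of $\int_{\mathbb{S}^4} e^{4u}\xi_i\,dw$ to that of $\int_{\mathbb{S}^4} u\,\xi_i\,dw$. First I would rewrite \eqref{11} in the form $P_4 u + 6 - Q e^{4u} = 0$ with the prescribed curvature $Q = \frac{6}{\alpha}\bigl(\frac{1}{\int_{\mathbb{S}^4}e^{4u}dw} - 1\bigr) + \frac{6}{\alpha} \cdot \frac{1}{\int_{\mathbb{S}^4}e^{4u}dw}$; more precisely, setting $m = \int_{\mathbb{S}^4}e^{4u}dw$, equation \eqref{11} reads $P_4 u + \frac{6}{\alpha} - \frac{6}{\alpha}\bigl(1-\tfrac{1}{\alpha}\bigr)\cdot(\ldots)$ — the cleanest route is to divide \eqref{11} by $\alpha$ and treat it as a $Q$-curvature equation with $Q = \frac{6}{\alpha m} e^{0}$ plus a constant forcing term, so that $\langle \nabla Q, \nabla \xi_i\rangle$ is controlled. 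Applying the Kazdan–Warner condition \eqref{kw} to this $Q$, which is itself an affine function composed with $e^{4u}$, produces a relation among $\int_{\mathbb{S}^4} e^{4u}\xi_i\,dw$; the key point is that when $\alpha \ne 1$ the constant term in front does not drop out in the way it does for $\alpha=1$, and the identity collapses to $(1-\tfrac1\alpha)\int_{\mathbb{S}^4} e^{4u}\xi_i\,dw = 0$, forcing $\int_{\mathbb{S}^4} e^{4u}\xi_i\,dw = 0$.

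For the second assertion, once we know $\int_{\mathbb{S}^4} e^{4u}\xi_i\,dw = 0$, equation \eqref{11} becomes simply $\alpha P_4 u + 6 - \frac{6 e^{4u}}{m} = 0$ with the right-hand side orthogonal, in the $L^2(\mathbb{S}^4)$ sense, to each coordinate function $\xi_i$ in the sense that $\int_{\mathbb{S}^4}\bigl(6 - \tfrac{6e^{4u}}{m}\bigr)\xi_i\,dw = 6\int_{\mathbb{S}^4}\xi_i\,dw - \tfrac{6}{m}\int_{\mathbb{S}^4}e^{4u}\xi_i\,dw = 0 - 0 = 0$, using that $\int_{\mathbb{S}^4}\xi_i\,dw = 0$ by symmetry. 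Hence $\int_{\mathbb{S}^4}(P_4 u)\,\xi_i\,dw = 0$. Now I would use that $\xi_i$ is an eigenfunction of $P_4$ on $\mathbb{S}^4$: since $\xi_i$ restricted to $\mathbb{S}^4$ is a first spherical harmonic with $-\Delta \xi_i = 4\xi_i$ (eigenvalue $n=4$ in dimension $4$), we get $P_4 \xi_i = \Delta^2 \xi_i - 2\Delta \xi_i = 16\xi_i + 8\xi_i = 24\xi_i$, a nonzero multiple of $\xi_i$. By self-adjointness of $P_4$, $\int_{\mathbb{S}^4}(P_4 u)\xi_i\,dw = \int_{\mathbb{S}^4} u (P_4 \xi_i)\,dw = 24\int_{\mathbb{S}^4} u\,\xi_i\,dw$, and therefore $\int_{\mathbb{S}^4} u\,\xi_i\,dw = 0$.

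I expect the main obstacle to be the first step: correctly organizing the prescribed-curvature reformulation so that the Kazdan–Warner identity \eqref{kw}, which is stated for the pure equation $P_4 v + 6 - Q e^{4v} = 0$ (i.e. the $\alpha = 1$ normalization), can legitimately be applied to \eqref{11} when $\alpha \ne 1$. The subtlety is that \eqref{11} is \emph{not} of the form $P_4 u + 6 - Q e^{4u}=0$ unless one absorbs $\alpha$; after dividing by $\alpha$ one has $P_4 u + \frac{6}{\alpha} - \frac{6}{\alpha}\cdot\frac{e^{4u}}{m} - \frac{6}{\alpha}\cdot\frac{e^{4u}}{m}$ — one must isolate $Q = \frac{6}{\alpha m}$ (a constant!) and a constant forcing $\frac{6}{\alpha}$, but a genuinely constant $Q$ makes $\nabla Q = 0$ and \eqref{kw} vacuous. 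The correct resolution, which I would carry out carefully, is that $u$ solving \eqref{11} also solves a $Q$-curvature equation with $Q = \frac{6}{\alpha}\bigl(1 - \tfrac{1}{\alpha}\bigr)\cdot(\text{something involving } u) + \ldots$; concretely, one writes $6 = \alpha P_4 u \cdot(\ldots)$ — the honest statement is that from \eqref{11}, $\frac{6e^{4u}}{m} = 6 + \alpha P_4 u$, so $e^{4u} = \frac{m}{6}(6 + \alpha P_4 u)$, and substituting back shows $u$ satisfies $P_4 u + 6 = \widetilde Q e^{4u}$ with $\widetilde Q = \frac{6 + P_4 u}{\frac{m}{6}(6+\alpha P_4 u)}$, whose gradient is a nonzero multiple of $\nabla(P_4 u)$ when $\alpha \ne 1$; plugging this into \eqref{kw} and integrating by parts against the eigen-relation $P_4\xi_i = 24\xi_i$ yields the claimed identity. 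Making this manipulation rigorous — in particular justifying the integrations by parts and the division, which requires $6 + \alpha P_4 u > 0$, i.e. $e^{4u}>0$, which is automatic — is the technical heart of the argument.
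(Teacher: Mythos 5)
Your proposal is essentially the paper's own argument: rewrite \eqref{11} as $P_4u+6=Qe^{4u}$ with $Q=(P_4u+6)e^{-4u}=\frac{6}{\alpha m}-6\left(\frac1\alpha-1\right)e^{-4u}$ (your $\widetilde Q$ simplifies to exactly this via the equation), apply the Kazdan--Warner identity \eqref{kw}, and test the equation against $\xi_i$ using $P_4\xi_i=24\xi_i$. The only imprecision is the order of deduction: since $\nabla Q=24\left(\frac1\alpha-1\right)e^{-4u}\nabla u$, the identity \eqref{kw} gives $\left(\frac1\alpha-1\right)\int_{\mathbb{S}^4}\langle\nabla u,\nabla\xi_i\rangle\,dw=0$, i.e.\ $\int_{\mathbb{S}^4}u\,\xi_i\,dw=0$ first (not $\int_{\mathbb{S}^4}e^{4u}\xi_i\,dw=0$ as you assert), and the exponential momentum then vanishes through the tested relation $4\alpha\int_{\mathbb{S}^4}u\,\xi_i\,dw=\frac1m\int_{\mathbb{S}^4}e^{4u}\xi_i\,dw$ --- the reverse of your ordering, but materially the same proof.
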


The paper is organized as follows. First, we list some preliminaries and integral identities in Section 2 which will be substantially used in the later context. Section 3 is devoted to the proof of Theorems \ref{main1}-\ref{Szego}. In Section 4, we derive various Pohozaev-type identities and employ them to validate Theorem \ref{main2} together with Proposition \ref{pro31}.
In Section 5,  we carry out a bifurcation analysis of \eqref{15} and its equivalent form,  and prove Theorem \ref{main3}  based on Theorems \ref{main1} and \ref{main2}. The last section is devoted to some discussion  of the improvement of the best constant for $\alpha$.

\vskip4mm
{\section{Preliminaries and Integral Identities}}
 \setcounter{equation}{0}

In this section, we state several important preliminaries and integral identities which will be needed in the proof of Theorem \ref{main1}. We begin by stating some basic facts on spherical geometry of $\mathbb{S}^4$.

Let $\theta_i,\ i=1,2,3,4$ denote the usual angular coordinates on the sphere with
  \begin{equation*}
    \theta_i\in[0, \pi],\ i=1,2,3, \quad  \theta_4\in[0,2\pi]
  \end{equation*}
 and define $x=\xi_1=\cos (\theta_1)$. Then the metric tensor can be given as follows:
 \begin{equation*}
 g_{ij}=
\left( \begin{matrix}
  (1-x^2)^{-1}  & 0 & 0 & 0 \\
  0  & 1-x^2  & 0 & 0 \\
    0 & 0 & (1-x^2)\sin^2\theta_2  & 0 \\
     0 & 0 & 0 & (1-x^2)\sin^2\theta_2 \sin^2\theta_3
\end{matrix}
\right )
 \end{equation*}

 For axially symmetric functions, we have
 \begin{equation*}
   \begin{aligned}
     \int_{\mathbb{S}^4}e^{4u}dw &=\frac{3}{8 \pi^2}\int_{-1}^1\int_0^{\pi}\int_0^{\pi}\int_0^{2\pi} e^{4u}(1-x^2)\sin^2\theta_2 \sin\theta_3 d\theta_4d\theta_3d\theta_2dx\\
     &=\frac{3}{4}\int_{-1}^1(1-x^2)e^{4u}
   \end{aligned}
 \end{equation*}
 and
 \begin{equation*}
    \begin{aligned}
      \Delta u&= |g|^{-\frac{1}{2}}\frac{\partial}{\partial x}\left(|g|^{\frac{1}{2}}g^{11}\frac{\partial u}{\partial x}\right)
     =(1-x^2)^{-1}\frac{\partial}{\partial x}\left[(1-x^2)^2\frac{\partial u}{\partial x}\right]\\
     &=(1-x^2)u''-4xu'.
    \end{aligned}
 \end{equation*}
One further has that
 \begin{equation*}
      \begin{aligned}
        ((1-x^2)\Delta u)''&=-2\Delta u-4x(\Delta u)'+(1-x^2)(\Delta u)''\\
        &=\Delta^2 u-2\Delta u.
      \end{aligned}
 \end{equation*}
Thus, the Paneitz operator on $\mathbb{S}^4$ can be expressed as
 \begin{equation*}
    \begin{aligned}
          P_4 u&=\left[  (1-x^2)^2u''-4x(1-x^2)u'\right]''\\
          &=\left[  (1-x^2)^2u'\right]''',
    \end{aligned}
 \end{equation*}
for $u=u(x)$. Then, we transform the original  equation \eqref{11} on $\mathbb{S}^4$ into an ODE \eqref{15}.


Note that the eigenfunctions associated with Paneitz operator coincide with those associated with the Laplacian. It is natural to introduce Gegenbauer polynomials (see \cite[Chapter 2.4]{MM}), which can be considered as a family of generalized Legendre polynomials.

Let
 \begin{equation*}
  F_k(x)=\frac{(-1)^k\Gamma(2)}{2^k\Gamma(k+2)}\frac{1}{(1-x^2)}\frac{d^k}{dx^k}(1-x^2)^{k+1}
 \end{equation*}
 be the $k$-th  Gegenbauer polynomials. Then $F_k$ satisfies that
 \begin{equation}\label{22}
   (1-x^2)F_k''-4xF_k'+\lambda_k F_k=0,\quad \lambda_k=k(k+3),\ k=0,1,\cdots;
 \end{equation}
 and
 \begin{equation}\label{ortho}
 \int_{-1}^1 (1-x^2) F_k F_l =0 \mbox{ for }k\neq l.
 \end{equation}
Note here $F_k$ is a sphere harmonic of degree $k$. Then it is readily checked that for $x\in(-1,1)$,
 \begin{equation*}
   (1-x^2)\left[  (1-x^2)^2F_k'\right]'''  =(\lambda_k^2+2\lambda_k)(1-x^2)F_k.
 \end{equation*}
Moreover(see \cite{MM,GR}),
 \begin{equation}\label{24}
   |F_k'(x)|\leq \frac{\lambda_k}{4},\quad  \int_{-1}^1 (1-x^2) F_k^2=\frac{8}{(2k+3)(k+1)(k+2)}.
 \end{equation}
We will focus on the gradient of $u$ on the sphere throughout the rest of the paper. Define
\begin{equation}\label{G}
G(x)=(1-x^2)u',
\end{equation}
where $u=u(x)$ is a solution to \eqref{11}. Then we have the following decomposition using the orthogonal polynomials $F_k$'s:
\begin{equation}
\label{decomp}
   G=a_0 F_0+\beta x+a_2\frac{1}{4}(5x^2-1)+\sum_{k=3}^\infty a_kF_k(x).
\end{equation}
Define
\begin{equation*}
G_2=\sum_{k=3}^\infty a_kF_k(x),
\end{equation*}
and
\begin{equation*}
b_k^2=a_k^2\int_{-1}^1(1-x^2)F_k^2,\ k\geq2.
\end{equation*}

We first derive a lemma concerning the constant term $a_0$ in \eqref{decomp}.

\begin{lemma}\label{l31}
  If $u$ is a critical point of $I_\alpha$ whenever $\alpha\neq 1$ , then the function $G(x)$ belongs to $H^2(-1,1)$ and satisfies that  $\int_{-1}^1(1-x^2)G=0$. In other words, $a_0=0$.
\end{lemma}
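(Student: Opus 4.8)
The plan is to first establish that $G = (1-x^2)u' \in H^2(-1,1)$, which amounts to tracking the regularity of a critical point $u$ near the endpoints $x = \pm 1$. Since $u \in H^2(\mathbb{S}^4)$ and is axially symmetric, one has $\int_{-1}^1 (1-x^2)\big(|(1-x^2)u''|^2 + |(1-x^2)u'|^2\big)\,dx < \infty$ from the finiteness of $I_\alpha(u)$; I would show this forces $G, G', G''$ to be square-integrable on $(-1,1)$ by rewriting the weighted norms in terms of $G$ (note $(1-x^2)u'' = G' + \frac{2x}{1-x^2}G$ and handle the singular weight carefully, or better, use the eigenfunction expansion \eqref{decomp} together with the bounds \eqref{24} and the fact that the energy controls $\sum_k (\lambda_k^2 + 2\lambda_k) b_k^2$). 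Either route gives $G \in H^2(-1,1)$, in particular $G$ is continuous on $[-1,1]$.

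Next I would derive the integral identity $\int_{-1}^1 (1-x^2) G = 0$ by testing the Euler–Lagrange equation against a suitable function. Integrating \eqref{15} against the constant function $1$ (equivalently, using that $F_0 \equiv 1$ is in the kernel of the Paneitz operator) should work: the term $\alpha\big[(1-x^2)^2 u'\big]'''$ integrates to a boundary term, and the boundary terms vanish because $(1-x^2)^2 u' = (1-x^2)G$ and its first two derivatives all carry positive powers of $(1-x^2)$ that kill them at $x = \pm 1$ (using the $H^2$ regularity of $G$ just obtained). What survives is $\int_{-1}^1 \big(6 - \frac{8 e^{4u}}{\int_{-1}^1 (1-x^2)e^{4u}}\big)\,dx$, but this is not quite $\int (1-x^2)G$; instead I would test against $x$ (the degree-one harmonic) so that the zeroth-order terms reorganize. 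Testing \eqref{15} against $x$: the Paneitz term produces, after integration by parts, a multiple of $\int_{-1}^1 (1-x^2) G \cdot (\text{something})$, while the right-hand side contributes $\int_{-1}^1 x\big(6 - \frac{8e^{4u}}{\int(1-x^2)e^{4u}}\big)\,dx$. By oddness considerations or by invoking Proposition~\ref{pro31} (which gives $\int_{\mathbb{S}^4} e^{4u}\xi_1\,dw = 0$, i.e. $\int_{-1}^1 x(1-x^2)e^{4u} = 0$, when $\alpha \neq 1$) this right-hand side vanishes, leaving $\int_{-1}^1 (1-x^2)G = 0$, which by the orthogonality \eqref{ortho} is exactly the statement $a_0 = 0$.

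The cleanest packaging is probably this: expand $G$ as in \eqref{decomp}, observe that $\int_{-1}^1 (1-x^2) G = a_0 \int_{-1}^1 (1-x^2) F_0 = \tfrac{4}{3} a_0$ by \eqref{ortho}, so it suffices to show $a_0 = 0$; then read off $a_0$ from the equation satisfied by $G$. Applying the operator in \eqref{15} we have $\alpha\big[(1-x^2)^2 G/(1-x^2)\big]''' $ — more precisely $G$ satisfies $\alpha\big[(1-x^2)G\big]''' = -6 + \frac{8e^{4u}}{\int(1-x^2)e^{4u}}$ pointwise; since $G \in H^2$, integrating this against $1$ and using the vanishing boundary terms gives $\int_{-1}^1\big(6 - \frac{8e^{4u}}{\int(1-x^2)e^{4u}}\big)dx = 0$, which is consistent with the volume normalization; the extra information needed to pin down $a_0$ comes from pairing with the linear mode and the center-of-mass condition.

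The main obstacle I anticipate is the endpoint regularity: justifying rigorously that $G \in H^2(-1,1)$ and that all boundary terms in the integrations by parts genuinely vanish, since $u$ itself may blow up logarithmically at $x = \pm 1$ (as the standard bubbles $-\ln(1-a x)$ show) and the weights $(1-x^2)^k$ are degenerate there. I would handle this by working with the weighted energy from the start, cutting off near $\pm 1$, integrating by parts on $(-1+\varepsilon, 1-\varepsilon)$, and showing the $\varepsilon$-boundary contributions tend to $0$ using the finiteness of $I_\alpha(u)$ and Cauchy–Schwarz; the algebraic part of the argument (expansion, orthogonality, extracting $a_0$) is then routine.
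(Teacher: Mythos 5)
Your reduction to ``$a_0=0$'' and your concern about endpoint regularity are fine, but the core of the argument --- producing the identity $\int_{-1}^1(1-x^2)G=0$ --- does not go through as written. First, the boundary terms you discard are not zero: with $H=(1-x^2)G=(1-x^2)^2u'$ one has $H'(\pm1)=0$ but $H''(\pm1)=\mp 4G'(\pm1)$, which is generically nonzero (for the bubble $u=-\ln(1-ax)$ at $\alpha=1$, $G'(1)=-2a/(1-a)\neq0$); the paper itself keeps exactly this term $-4G'(1)\int_{-1}^1(1-s^2)F_k$ in \eqref{218I}. So testing \eqref{15} against $1$ does not give $\int_{-1}^1\bigl(6-\frac{8}{\gamma}e^{4u}\bigr)dx=0$ (and that quantity is in any case not the volume normalization, which carries the weight $1-x^2$), and testing \eqref{15} against $x$ gives $\alpha\bigl(-4G'(1)+4G'(-1)\bigr)=\frac{8}{\gamma}\int_{-1}^1 x\,e^{4u}dx$: the right-hand side is the \emph{unweighted} first moment of $e^{4u}$, which is not controlled by Proposition \ref{pro31} (that gives $\int_{-1}^1 x(1-x^2)e^{4u}=0$), and no oddness is available since $u$ need not be even. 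In particular the quantity $\int_{-1}^1(1-x^2)G$ never appears in your pairings, so $a_0$ is not pinned down; your closing paragraph essentially concedes this.

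The paper's mechanism is different and self-contained: differentiate \eqref{26}, multiply by $(1-x^2)^2$ and substitute \eqref{26} back to eliminate $e^{4u}$ altogether, obtaining the pointwise identity \eqref{27}; integrating \eqref{27} over $[-1,1]$, all boundary contributions vanish because they are exact derivatives (e.g.\ the cubic-looking term collapses to $-\tfrac12\bigl[(1-x^2)G'-2xG\bigr]^2\big|_{-1}^1=0$ as in \eqref{274}), and one is left with $\bigl(24-\tfrac{24}{\alpha}\bigr)\int_{-1}^1(1-x^2)G=0$, whence $a_0=0$ for $\alpha\neq1$. If you want to salvage a route in the spirit of yours, you must use the \emph{weighted} form of the equation (multiply \eqref{15} by $1-x^2$ before testing against $x$): then the fourth-order term integrates away, the second-order term produces exactly $6\alpha\int_{-1}^1(1-x^2)G$, and the exponential term is $\frac{8}{\gamma}\int_{-1}^1 x(1-x^2)e^{4u}$, which vanishes by Proposition \ref{pro31}; equivalently, integrate by parts to see $\int_{-1}^1(1-x^2)G=4\int_{-1}^1 x(1-x^2)u\,dx\propto\int_{\mathbb{S}^4}u\,\xi_1\,dw=0$. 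That is a legitimate but genuinely different argument (it leans on the Kazdan--Warner identity behind Proposition \ref{pro31}), and it is not what your computation actually establishes.
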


\begin{proof}
 In view of equation \eqref{15}, we have
   \begin{equation}\label{26}
     \alpha((1-x^2)G)'''+6-\frac{8}{\gamma}e^{4u}=0,
   \end{equation}
where
\begin{equation}\label{gamma}
\gamma=\int_{-1}^1 (1-x^2) e^{4u}.
\end{equation} By differentiating \eqref{26}, we further have
  \begin{equation}
  \label{26I}
     \alpha((1-x^2)G)''''-\frac{32}{\gamma}e^{4u}u'=0.
  \end{equation}
  Multiplying \eqref{26I} by $(1-x^2)^2$ and employing  \eqref{26}, we have
  \begin{equation}\label{27}
   (1-x^2)^2((1-x^2)G)''''-\frac{24}{\alpha}(1-x^2)G-4[(1-x^2)G]'''(1-x^2)G=0.
     \end{equation}
   Integrate \eqref{27} over $[-1,1]$ to get
   \begin{equation}
   \label{272}
     \int_{-1}^1(1-x^2)^2((1-x^2)G)''''-  \frac{24}{\alpha}\int_{-1}^1 (1-x^2)G
     -4\int_{-1}^1((1-x^2)G)'''(1-x^2)G=0.
   \end{equation}
   We use  integration by parts for the first term of \eqref{272}:
    \begin{align}
       \int_{-1}^1 (1-x^2)^2((1-x^2)G)'''' &=\left[ (1-x^2)^2((1-x^2)G)'''\right]\Big|_{-1}^1-  \int_{-1}^1
       \left((1-x^2)^2\right)'((1-x^2)G)'''\nonumber\\
       &=  -\left((1-x^2)^2\right)'((1-x^2)G)''\Big|_{-1}^1+ \int_{-1}^1
       \left((1-x^2)^2\right)''((1-x^2)G)''\nonumber\\
       &=
       \left((1-x^2)^2\right)''((1-x^2)G)'\Big|_{-1}^1-\left((1-x^2)^2\right)'''((1-x^2)G)\Big|_{-1}^1\nonumber\\
       &+\int_{-1}^1
       \left((1-x^2)^2\right)''''(1-x^2)G\nonumber\\
       &=24 \int_{-1}^1(1-x^2)G.\label{273}
   \end{align}
 Similarly, for the last term in \eqref{272},  one has
     \begin{align}
       \int_{-1}^1 ((1-x^2)G)'''(1-x^2)G&=((1-x^2)G)''(1-x^2)G\Big|_{-1}^1-  \int_{-1}^1
       \left((1-x^2)G\right)'((1-x^2)G)''\nonumber\\
       &=   -\frac{1}{2} \left((1-x^2)G'-2xG\right)^2\Big|_{-1}^1=0.\label{274}
     \end{align}
We conclude from the \eqref{272}-\eqref{274} that
   \begin{equation*}
    \left( 24 -\frac{24}{\alpha}\right)\int_{-1}^1(1-x^2)G=0,
   \end{equation*}
which implies that $\int_{-1}^1(1-x^2)G=0$ since $\alpha\neq 1$.
\end{proof}
Next, we state some important integral identities which will be used frequently in the proof of Theorem \ref{main1}.

\begin{lemma}\label{equality}
We establish the following equalities for $G(x)=(1-x^2)u'$ where $u$ is a solution of \eqref{15} and $\alpha>0$.
\begin{equation}\label{220}
  \int_{-1}^1(1-x^2) F_1 G=\frac{4}{15}\beta,
\end{equation}
 \begin{equation}\label{221}
 \int_{-1}^1 (1-x^2)^2\frac{e^{4u}}{\gamma}=\frac{4}{5}(1-\alpha\beta),
 \end{equation}
\begin{equation}\label{222}
  \int_{-1}^1 (1-x^2)  F_k G=-\frac{8}{\alpha(\lambda_k^2+2\lambda_k)}\int_{-1}^1 \frac{e^{4u}}{\gamma}(1-x^2)^2F_k',\quad k\geq 2,
\end{equation}
\begin{equation}\label{223}
     \int_{-1}^1 |[(1-x^2)G]'|^2=\frac{16}{15}\left(5-\frac{1}{\alpha}\right)\beta,
\end{equation}
recalling that $\beta$ is defined in \eqref{decomp} and $\gamma$ is given in \eqref{gamma}.
\end{lemma}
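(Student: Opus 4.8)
The plan is to prove the four identities in Lemma \ref{equality} by the same integration-by-parts machinery already exhibited in the proof of Lemma \ref{l31}, working from equations \eqref{26} and \eqref{26I}. Throughout I will repeatedly use that $(1-x^2)$ and its powers vanish at $x=\pm1$ with enough multiplicity that all boundary terms in the integrations by parts disappear; the only non-obvious boundary computation, the one for $((1-x^2)G)''(1-x^2)G$, was already carried out in \eqref{274} and I will reuse it. I will also freely use the Gegenbauer facts \eqref{22}, \eqref{ortho}, \eqref{24}, the eigenvalue identity $(1-x^2)[(1-x^2)^2F_k']''' = (\lambda_k^2+2\lambda_k)(1-x^2)F_k$, and the normalization $\int_{-1}^1(1-x^2)F_1^2 = \frac{8}{5\cdot2\cdot3} = \frac{4}{15}$, together with $F_1(x)=x$.

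For \eqref{220}: since $F_1=x$ is orthogonal (with weight $1-x^2$) to $F_0$ and to every $F_k$ with $k\neq1$, the decomposition \eqref{decomp} gives $\int_{-1}^1(1-x^2)F_1 G = \beta\int_{-1}^1(1-x^2)x^2 = \beta\int_{-1}^1(1-x^2)F_1^2 = \frac{4}{15}\beta$, which is just the $k=1$ instance of reading off a Fourier coefficient. For \eqref{221}: I would multiply \eqref{26} by $(1-x^2)^2$ and integrate over $[-1,1]$. The term $\int_{-1}^1(1-x^2)^2\cdot\frac{8}{\gamma}e^{4u}$ is what we want to isolate; the constant term contributes $6\int_{-1}^1(1-x^2)^2 = 6\cdot\frac{16}{15} = \frac{32}{5}$; and the term $\alpha\int_{-1}^1(1-x^2)^2((1-x^2)G)'''$ should be integrated by parts three times, the boundary terms vanishing, producing a multiple of $\int_{-1}^1(1-x^2)G$-type and $\int_{-1}^1(1-x^2)F_1G$-type integrals — more precisely $\big((1-x^2)^2\big)'''$ is a linear polynomial, so this reduces to a constant times $\int_{-1}^1 x\,(1-x^2)G = \frac{4}{15}\beta$ after also using Lemma \ref{l31} to kill the $a_0$ part. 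Collecting the constants should yield $\frac{8}{5}(1-\alpha\beta)$ for $\int(1-x^2)^2\frac{8}{\gamma}e^{4u}$, hence \eqref{221}; I will need to track the numerical coefficients carefully, which is the main bookkeeping risk.

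For \eqref{222}: I would multiply \eqref{26I}, i.e. $\alpha((1-x^2)G)'''' = \frac{32}{\gamma}e^{4u}u' = \frac{32}{\gamma}e^{4u}\frac{G}{1-x^2}$, by $(1-x^2)F_k$ — or, more conveniently, pair $\alpha((1-x^2)G)''''$ with $(1-x^2)F_k$ and integrate by parts to move all four derivatives onto $F_k$. Using $\big[(1-x^2)^2F_k'\big]''' = \frac{\lambda_k^2+2\lambda_k}{1-x^2}(1-x^2)F_k$ — wait, more to the point, I will integrate by parts so that $\int_{-1}^1 \alpha((1-x^2)G)'''' (1-x^2)F_k$ becomes, after four integrations by parts with vanishing boundary terms, $\alpha\int_{-1}^1 (1-x^2)G\cdot\big[\text{4th order operator applied to }(1-x^2)F_k\big]$, and the fourth-order operator here is exactly the one with eigenvalue $\lambda_k^2+2\lambda_k$ acting on the spherical harmonic, giving $\alpha(\lambda_k^2+2\lambda_k)\int_{-1}^1(1-x^2)F_kG$; on the other side $\int_{-1}^1 \frac{32}{\gamma}e^{4u}\frac{G}{1-x^2}(1-x^2)F_k$ needs to be rewritten — integrate by parts once using $G = (1-x^2)u'$ and $e^{4u}u' = \frac14(e^{4u})'$ to turn $\int \frac{e^{4u}}{\gamma}(1-x^2)u'F_k$ into $-\frac14\int\frac{e^{4u}}{\gamma}((1-x^2)F_k)'$... actually the cleaner route, matching the stated right-hand side $\int\frac{e^{4u}}{\gamma}(1-x^2)^2F_k'$, is to start from the undifferentiated equation \eqref{26}: pair $\alpha((1-x^2)G)'''$ with $(1-x^2)^2F_k'$, integrate by parts three times onto $F_k$ to hit the eigenvalue identity, and pair $\frac{8}{\gamma}e^{4u}$ with $(1-x^2)^2F_k'$ directly, with the constant $6$ term integrating to $6\int_{-1}^1(1-x^2)^2F_k' = 0$ for $k\geq2$ since $(1-x^2)^2F_k'$ is a total derivative... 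I will choose whichever pairing makes the boundary terms transparently zero and the eigenvalue identity directly applicable; the main obstacle is getting the sign and the factor $-\frac{8}{\alpha(\lambda_k^2+2\lambda_k)}$ exactly right, but this is forced once the pairing is fixed. Finally \eqref{223}: expand $\int_{-1}^1|[(1-x^2)G]'|^2 = -\int_{-1}^1[(1-x^2)G]''(1-x^2)G$ (boundary term vanishes), then use \eqref{27}, which says $(1-x^2)^2((1-x^2)G)'''' = \frac{24}{\alpha}(1-x^2)G + 4[(1-x^2)G]'''(1-x^2)G$; integrate by parts to relate $\int[(1-x^2)G]''(1-x^2)G$ to $\int(1-x^2)^2((1-x^2)G)''''$ and thereby, via \eqref{27} and the already-proved \eqref{273}-type identity $\int(1-x^2)^2((1-x^2)G)'''' = 24\int(1-x^2)G$ together with $\int[(1-x^2)G]'''(1-x^2)G = 0$ from \eqref{274}, reduce everything to a constant multiple of $\int_{-1}^1(1-x^2)G$ plus a $\beta$-term; using Lemma \ref{l31} ($a_0=0$) and \eqref{220} to evaluate the surviving integral in terms of $\beta$ gives the coefficient $\frac{16}{15}(5-\frac1\alpha)$. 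The principal difficulty in the whole lemma is purely computational: carrying the exact rational constants through three or four successive integrations by parts without error, and confirming each time that the polynomial prefactors $(1-x^2)^m$ kill the boundary contributions.
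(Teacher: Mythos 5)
Your treatments of \eqref{220}, \eqref{221} and \eqref{222} are essentially the paper's own argument. For \eqref{220} the paper likewise just reads off the Fourier coefficient (its citation of \eqref{27} there is immaterial). For \eqref{221} and \eqref{222} the paper multiplies \eqref{26} by $\int_{-1}^x(1-s^2)F_k(s)\,ds=-\lambda_k^{-1}(1-x^2)^2F_k'(x)$ and integrates, which is exactly your chosen pairing up to the constant $-\lambda_k$ (and for $k=1$ your multiplier $(1-x^2)^2$ is $-4$ times the paper's); your observations that the boundary terms vanish and that $\int_{-1}^1(1-x^2)^2F_k'=0$ for $k\ge 2$ (best seen by one integration by parts and orthogonality to $F_1$, not by the ``total derivative'' remark) are correct. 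One bookkeeping slip you flagged yourself: carrying out your computation for \eqref{221} gives $\int_{-1}^1(1-x^2)^2\frac{8}{\gamma}e^{4u}=\frac{32}{5}(1-\alpha\beta)$, not $\frac{8}{5}(1-\alpha\beta)$; with the correct constant one lands on \eqref{221} as stated.

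For \eqref{223}, however, your plan has a genuine structural gap. You propose to reach $\int_{-1}^1|[(1-x^2)G]'|^2$ from \eqref{27} using only the unweighted facts \eqref{273} ($\int(1-x^2)^2((1-x^2)G)''''=24\int(1-x^2)G$), \eqref{274} ($\int((1-x^2)G)'''(1-x^2)G=0$) and Lemma \ref{l31}. But integrating \eqref{27} over $[-1,1]$ and inserting \eqref{273}--\eqref{274} reproduces exactly the identity $(24-\tfrac{24}{\alpha})\int(1-x^2)G=0$, i.e.\ Lemma \ref{l31} itself, and nothing more: \eqref{273} is linear in $G$ and \eqref{274} says the quadratic term vanishes, so the quadratic quantity $\int|[(1-x^2)G]'|^2$ never appears, and integration by parts cannot convert the linear functional $\int(1-x^2)^2((1-x^2)G)''''$ into the quadratic one $-\int[(1-x^2)G]''(1-x^2)G$. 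The missing idea is to test \eqref{27} against the weight $x$ (equivalently $F_1$) before integrating: then the term $4\int((1-x^2)G)'''(1-x^2)\,x\,G$ does \emph{not} vanish but equals $6\int|[(1-x^2)G]'|^2$ after integration by parts (this is \eqref{227III}), while the two linear terms give $120\int x(1-x^2)G=32\beta$ and $\frac{24}{\alpha}\int x(1-x^2)G=\frac{32}{5\alpha}\beta$, yielding \eqref{223} directly; note also that this route needs neither Lemma \ref{l31} nor the restriction $\alpha\neq 1$, consistent with the lemma being stated for all $\alpha>0$.
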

\begin{proof}
  Indeed, we have
\begin{equation*}
  F_0(x)=1,\quad F_1(x)=x,\quad F_2(x)=\frac{1}{4}(5x^2-1).
\end{equation*}
Then it follows from  \eqref{27} that
\begin{equation*}
  \int_{-1}^1 (1-x^2) F_1 G=  \beta \int_{-1}^1 (1-x^2)x^2 =\frac{4}{15}\beta.
\end{equation*}
This finishes the proof of \eqref{220}.

For \eqref{221}, multiplying  \eqref{26}
by $\int_{-1}^x (1-s^2)F_k(s)ds$ with $k\geq 1$
   and integrating over $[-1, 1]$, one has
   \begin{equation}\label{225}
     \int_{-1}^1\int_{-1}^x (1-s^2)F_k(s)\left[\alpha((1-x^2)G)'''+6-\frac{8}{\gamma}e^{4u}\right]dsdx=0.
   \end{equation}
It is easy to see that
\begin{align}
    \int_{-1}^1\int_{-1}^x (1-s^2)F_k(s) ((1-x^2)G)'''dsdx & = \left[((1-x^2)G)''\int_{-1}^x (1-s^2)F_k(s) \right]\Bigg|_{-1}^1\nonumber\\
& - \int_{-1}^1  (1-x^2)F_k  ((1-x^2)G)''\nonumber\\
& = -4G'(1)\int_{-1}^1 (1-s^2)F_k(s) -\left[(1-x^2)F_k(x) ((1-x^2)G)'\right]\Big|_{-1}^1\nonumber\\
& +\int_{-1}^1  ((1-x^2)F_k)' ((1-x^2)G)'\nonumber\\
& = \left[((1-x^2)F_k(x))' (1-x^2)G\right]\Big|_{-1}^1-\int_{-1}^1  ((1-x^2)F_k)'' ((1-x^2)G)\nonumber\\
    &= -\int_{-1}^1  G (1-x^2)\left[(1-x^2)F_k''-4xF_k'-2F_k\right]\nonumber\\
    &=(\lambda_k+2)\int_{-1}^1    (1-x^2)GF_k.\label{218I}
\end{align}
Moreover,
\begin{align}
    \int_{-1}^1\int_{-1}^x  (1-s^2)F_k(s) dsdx &= \left(x\int_{-1}^x (1-s^2)F_k(s)\right)\Big|_{-1}^1- \int_{-1}^1  (1-x^2)xF_k  \nonumber\\
    &= \int_{-1}^1 (1-s^2)F_k(s)-\delta_{1k} \int_{-1}^1  (1-x^2)x^2\nonumber\\
    &=-\frac{4}{15}\delta_{1k}.\label{218II}
\end{align}
We use \eqref{22} to obtain
\begin{equation}\label{226}
  [(1-x^2)^2 F_k']'= (1-x^2)^2 F_k''-4x (1-x^2)  F_k'=-\lambda_k(1-x^2)  F_k
\end{equation}
for $k\geq 1$.

Entailing from \eqref{226}, we have that
 \begin{equation*}
  \int_{-1}^x  (1-s^2)F_k(s)=-\frac{1}{\lambda_k}(1-x^2)^2 F_k'(x).
 \end{equation*}
 Letting $k=1$, we have
\begin{equation}
\label{218III}
    \int_{-1}^1\int_{-1}^x  (1-s^2)F_1(s) ds e^{4u}dx
     =-\frac{1}{4} \int_{-1}^1(1-x^2)^2e^{4u}.
\end{equation}
Keep in mind that
\begin{equation}
\label{218IV}
 (\lambda_k+2)\int_{-1}^1    (1-x^2)GF_k=6\beta \int_{-1}^1    (1-x^2)x^2=\frac{8}{5}\beta.
\end{equation}
 Then, \eqref{221} follows from \eqref{225}-\eqref{218IV}.

 Similarly, letting $k\geq 2$, we conclude  that
 \begin{equation*}
  (\lambda_k+2)\alpha\int_{-1}^1    (1-x^2)GF_k=-\frac{8}{\gamma\lambda_k} \int_{-1}^1(1-x^2)^2 F_k'e^{4u}.
 \end{equation*}
 So \eqref{222} holds.

 For \eqref{223}, multiplying \eqref{27} by $x$ and integrating from $-1$ to $1$, we get
 \begin{equation}\label{227}
   \int_{-1}^1  \left[x (1-x^2)^2((1-x^2)G)''''-\frac{24}{\alpha}x(1-x^2)G-4[(1-x^2)G]'''(1-x^2)xG\right]=0
 \end{equation}
For the first term in \eqref{227}, we have
\begin{align}
    \int_{-1}^1 x (1-x^2)^2((1-x^2)G)'''' &=  \int_{-1}^1   \left(x (1-x^2)^2 \right)''''(1-x^2)G\nonumber\\
    &=120\int_{-1}^1 (1-x^2)xG\nonumber\\
    &=120\beta\int_{-1}^1 (1-x^2)x^2\nonumber\\
    &=32\beta.\label{227I}
\end{align}
For the second term, one has
\begin{align}
     \frac{24}{\alpha}\int_{-1}^1 x(1-x^2)G&=   \frac{24\beta}{\alpha} \int_{-1}^1 (1-x^2)x^2\nonumber\\
    &=\frac{32}{5\alpha}\beta.\label{227II}
\end{align}
For the last term, we find
\begin{align}
4\int_{-1}^1 [(1-x^2)G]'''(1-x^2)xG &= - 4\int_{-1}^1 [(1-x^2)G]''(1-x^2)G- 4\int_{-1}^1x [(1-x^2)G]''[(1-x^2)G]'\nonumber\\
&=4\int_{-1}^1 |[(1-x^2)G]'|^2-2x|[(1-x^2)G]'|^2 \Big|_{-1}^1 +2\int_{-1}^1 |[(1-x^2)G]'|^2\nonumber\\
&=6\int_{-1}^1 |[(1-x^2)G]'|^2. \label{227III}
\end{align}
Therefore, \eqref{223} follows from combining \eqref{227}-\eqref{227III}.
\end{proof}


 \vskip4mm
{\section{Proof of Theorem \ref{main1}}}

  Inspired by \cite{Gui98} and \cite{Gui-Wei}, our basic strategy is to assume $\beta\neq 0$, and show that  it leads to a contradiction with the range of $\alpha$. It is fairly easy to see from \eqref{223} that if $\beta=0$, then $\nabla u=0$, which shows that $u$ is a constant. One important new ingredient is the surprising a priori  estimate in Lemma \ref{132} regarding the derivative of the gradient of $u$.


We now give the key estimate on the derivative of $G$, which is defined in \eqref{G}. Note that the lemma is true for general $\alpha>0$.
\begin{lemma}\label{132}
Let $M=\max_{x\in[-1,1]}G'(x)$.
Then we have
\begin{equation}\label{21}
  M\leq\frac{1}{\alpha},
\end{equation}
i.e.,
\begin{equation*}
G'(x)\leq\frac{1}{\alpha}
\end{equation*}
for all $x\in[-1,1]$.
\end{lemma}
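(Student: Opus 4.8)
The plan is to obtain a pointwise differential inequality for $G$ at the point $x_0$ where $G'$ attains its maximum $M$, and then extract the bound $M\le 1/\alpha$. Recall $G=(1-x^2)u'$ and from \eqref{26} we have $\alpha((1-x^2)G)'''=\frac{8}{\gamma}e^{4u}-6$, with $\gamma>0$. The first step is to rewrite $(1-x^2)G$ and its derivatives in terms of $G$ itself: since $(1-x^2)G$ is a smooth function on $[-1,1]$ vanishing to the appropriate order at the endpoints, and $\Delta u = ((1-x^2)u')' - \text{(lower order)}$, I would express $((1-x^2)G)'''$ in terms of $G'', G', G$ and rational coefficients in $x$, or more usefully relate it to the quantity $\Delta u$ and $P_4 u$. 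In fact the cleaner route is to differentiate \eqref{26} once to get \eqref{26I}, namely $\alpha((1-x^2)G)'''' = \frac{32}{\gamma}e^{4u}u'$, and combine with \eqref{26} to eliminate $e^{4u}$, yielding the fourth-order ODE \eqref{27}:
\begin{equation*}
(1-x^2)^2((1-x^2)G)'''' - \frac{24}{\alpha}(1-x^2)G - 4((1-x^2)G)'''(1-x^2)G = 0.
\end{equation*}

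Next I would introduce the auxiliary function $H(x) = ((1-x^2)G)'(x) = ((1-x^2)^2 u')'/\ (\text{suitable normalization})$; note $H$ is directly related to $\Delta u$ since $\Delta u = (1-x^2)u'' - 4xu' = ((1-x^2)^2 u')'/(1-x^2) - \text{corrections}$. The key point is that $G' = u'((1-x^2))' \cdot(\dots)$ — more precisely $G' = -2xu' + (1-x^2)u''$, which is exactly $\Delta u + 2xu' \cdot(\dots)$; I will need the clean identity $G'(x) = \Delta u(x) + 2xu'$ or similar, so that $G'$ is essentially $\Delta u$ plus a drift term. Then at an interior maximum $x_0$ of $G'$ we have $(G')'(x_0)=0$ and $(G')''(x_0)\le 0$. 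Feeding these into the differentiated form of the equation at $x_0$ should produce an inequality of the shape $\alpha M \le 1$ after using $e^{4u}/\gamma \ge 0$ and controlling the sign of the remaining terms. I also need to treat the possibility that the maximum of $G'$ occurs at an endpoint $x=\pm 1$; there $G'(\pm1) = \mp 2(\pm1)u'(\pm1) + 0 = -2u'(\pm1)\cdot(\pm1)$, and I would use the boundary behavior of the solution (regularity of $u$ on $\mathbb{S}^4$ forcing $G, G'$ to be finite, and $(1-x^2)G$ to vanish with its first few derivatives) to show the endpoint case either cannot give the max or still satisfies the bound.

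The main obstacle I anticipate is correctly handling the cross term $4((1-x^2)G)'''(1-x^2)G$ in \eqref{27}: this is nonlinear in the unknown and its sign is not obvious, so the maximum-principle argument has to be set up so that this term is either nonpositive at $x_0$ or absorbed. I expect the trick is to test the equation against a cleverly chosen quantity — perhaps to apply the maximum principle not to $G'$ directly but to a function like $G'(x)$ restricted via the substitution $t=\arccos x$ (the geodesic variable), where $P_4$ becomes a constant-coefficient-like operator and the drift terms simplify; in the $t$ variable $G' \sim \frac{d}{dt}(\text{something})$ and the fourth-order operator acts more transparently. Alternatively, one argues by contradiction: if $M>1/\alpha$ at $x_0$, then near $x_0$ the function $e^{4u}/\gamma$ is forced (via \eqref{26} rewritten as $\alpha((1-x^2)G)''' = \frac{8}{\gamma}e^{4u}-6$ and the relation between $((1-x^2)G)'''$ and derivatives of $G'$) to be negative or to violate $\int_{-1}^1(1-x^2)e^{4u}/\gamma = 1$, a contradiction. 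I would carry out the computation in whichever coordinate makes the coefficient bookkeeping cleanest, present the pointwise inequality at $x_0$, and conclude $M\le 1/\alpha$; the endpoint analysis and the sign of the nonlinear cross term are the two places where care is essential.
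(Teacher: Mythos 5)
Your plan stalls precisely where the paper's argument is simplest, because you take a wrong turn at the start: you propose to eliminate $e^{4u}$ by combining \eqref{26} with its derivative to get \eqref{27}, and then to run a maximum-principle argument on that fourth-order equation. But eliminating $e^{4u}$ throws away the only sign information the proof needs, and it creates the nonlinear cross term $4((1-x^2)G)'''(1-x^2)G$ whose sign, as you yourself admit, you cannot control -- so your main route does not close (you even say you will ``use $e^{4u}/\gamma\ge 0$'' after feeding the max conditions into an equation from which $e^{4u}$ has already been removed). The paper never touches \eqref{27} here. It simply expands \eqref{26} using $((1-x^2)G)'''=(1-x^2)G'''-6xG''-6G'$, i.e.\ \eqref{28}, and evaluates at an interior maximum point $x_0$ of $G'$: there $G''(x_0)=0$, $G'''(x_0)\le 0$, $1-x_0^2>0$, and $e^{4u}\ge 0$, so \eqref{28} gives $6\alpha M\le 6-\tfrac{8}{\gamma}e^{4u(x_0)}\le 6$ at once. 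Your ``alternative'' contradiction argument at the end gestures at this, but it is never carried out, and no auxiliary function $H$, change of variable $t=\arccos x$, or relation of $G'$ to $\Delta u$ is needed.

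The second genuine gap is the endpoint case, which you defer to ``boundary behavior'' without an argument. The supremum of $G'$ can indeed be approached only as $x\to\pm 1$ (the estimate is sharp in general, cf.\ the solutions $u=-\ln(1-ax)$ at $\alpha=1$), so this case cannot be dismissed. The paper handles it by writing $u(x)=\bar u(r)$, $r=\sqrt{1-x^2}$, using smoothness of $\bar u$ to expand $\bar u_r=t_1r+t_2r^3+O(r^5)$, showing $G'(1)=2t_1$, and then -- crucially using the maximality of $G'(1)$ itself, not mere regularity -- deducing $4t_2-3t_1\le 0$, hence $\lim_{x\to 1}(-xG''(x))\le 0$ and $\lim_{x\to 1}(1-x^2)G'''(x)=0$; passing to the limit in \eqref{28} then yields $G'(1)\le\frac{1}{6\alpha}\bigl(6-\frac{8}{\gamma}e^{4u}\bigr)\le\frac1\alpha$. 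Your sketch neither identifies that the sign of $-xG''$ at the endpoint is the delicate point nor supplies the expansion that settles it, so as written the proposal does not prove the lemma.
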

\begin{proof}
Take $M=G'(x_0)$ for some $x_0\in[-1,1]$. We  first prove the lemma if  $x_0\in(-1,1)$.
    After some calculations, \eqref{26} becomes
\begin{equation}\label{28}
    \alpha[-6G'-6xG''+(1-x^2)G''']+6-\frac{8}{\gamma}e^{4u}=0, \quad x\in(-1,1).
\end{equation}
At $x=x_0$, we have
\begin{equation*}
   G'(x_0)=M,\quad G''(x_0)=0\quad\mbox{and }\quad   G'''(x_0)\leq0.
\end{equation*}
Consequently, $M\leq\frac{1}{\alpha}$.

If $x_0 \not \in(-1,1)$,  then we may assume without loss of generality that
\begin{equation*}
  \sup_{x\in(-1,1)}G'(x)=\lim_{x_n\rightarrow 1}G'(x_n)
\end{equation*}
for some sequence $\{x_n\} \subset (-1, 1)$.
Let  $r=|x'|=\sqrt{1-x^2}$,  then we write
\begin{equation*}
  G(x)=\bar G(r) \mbox{ and }u(x)=\bar u(r) \quad \mbox{ for } r\in[0,1)\mbox{ and }x\in(0,1].
\end{equation*}
 It is well known that $\bar u(r)$ can be extended evenly and $\bar u(r)\in \mathcal{C}^\infty(-\frac{1}{2},\frac{1}{2})$. For $r\in(0,\frac{1}{2})$,
 \begin{equation*}
   u'(x)=\bar u_r(r)\frac{dr}{dx}=-\frac{\bar u_r(r)}{r}\sqrt{1-r^2},
 \end{equation*}
then, one has
\begin{equation}\label{29}
  G(x)=(1-x^2)u'(x)=-r\bar u_r(r)\sqrt{1-r^2}=\bar G(r),\quad r\in(0,\frac{1}{2}).
\end{equation}
A direct calculation shows that
\begin{equation*}
  G'(x)=\bar G_r(r)\frac{dr}{dx}=\left(\bar u_{rr}(r)+\frac{\bar u_{r}(r)}{r}\right)(1-r^2)-r\bar u_{r}(r).
\end{equation*}
It is easy to see that $\lim_{x\rightarrow 1}G'(x)=2\bar u_{rr}(0).$  Note that
\begin{equation*}
  \lim_{r\rightarrow0}\bar u_{r}(r)=\bar u_{r}(0)=0\mbox{ and } \lim_{r\rightarrow0}\frac{\bar u_{r}}{r}=\bar u_{rr}(0).
 \end{equation*}
We can write
\begin{equation}\label{210}
  \bar u_r(r)=t_1 r+t_2r^3+O(r^5) \quad \mbox{near }r=0.
\end{equation}
Then
\begin{equation}\label{211}
 G'(1)=\lim_{x\rightarrow 1}G'(x)=2t_1=\sup_{x\in(-1,1)}G'(x)>0.
\end{equation}
The last inequality is insured by the fact that \eqref{29} implies
\begin{equation*}
  G(1)=G(-1)=\lim_{x\rightarrow \pm1}G(x)=0.
\end{equation*}
Furthermore, by \eqref{210},
\begin{equation*}
  G'(x)=2t_1+(4t_2-3t_1)r^2+O(r^4) \quad \mbox{near }r=0.
\end{equation*}
It follows from  \eqref{211} that
\begin{equation*}
4t_2-3t_1\leq 0.
\end{equation*}
By similar arguments, we obtain that  near $r=0$,
\begin{equation*}
  \begin{aligned}
    G''(x)&=\frac{dG'(x)}{dr}\frac{dr}{dx}\\
    &=-2(4t_2-3t_1+O(r^2))\sqrt{1-r^2}.
  \end{aligned}
\end{equation*}
Therefore,
\begin{equation}\label{212}
 \lim_{x\rightarrow1}-xG''(x)\leq 0.
\end{equation}
By similar calculations again, near $r=0$,
\begin{equation*}
  (1-x^2)G'''(x)=r^2\frac{dG''(x)}{dr}\frac{dr}{dx}=O(r^2).
\end{equation*}
This ensures that
\begin{equation}\label{213}
 \lim_{x\rightarrow1} (1-x^2)G'''(x)=0.
\end{equation}
Using \eqref{28} together with \eqref{211}-\eqref{213}, we have
\begin{equation}\label{214}
G'(1)= \sup_{x\in(-1,1)}G'(x)\leq \frac{1}{6\alpha}\left(6-\frac{8}{\gamma}e^{4u}\right)\leq\frac{1}{\alpha}.
\end{equation}
\end{proof}

\begin{remark}\label{optimal}
When $\alpha=1$,  there is a family of solutions $ u=-\ln(1-ax)$ to \eqref{15}  for
any $a \in (0, 1)$.  Straightforward computations show that the estimate in Lemma \ref{132} is indeed optimal in general.   However, given some extra information,  the estimate may be improved slightly (see the discussion in Section 6 below for details).
\end{remark}

\begin{lemma}
\label{l33}
Concerning a semi-norm of $G$, we have the following estimate:
\begin{equation}\label{keyestimate}
  \lfloor G\rfloor^2\leq \left(\frac{4}{\alpha}-6\right)\int_{-1}^1 |[(1-x^2)G]'|^2+\frac{16}{\alpha}\int_{-1}^1 (1-x^2)G^2,
\end{equation}
where
\begin{equation}\label{norm}
\lfloor G\rfloor^2= \frac43\int_{\mathbb{S}^4} (P_4 G ) G dw= \int_{-1}^1 (1-x^2)\left[(1-x^2)^2G'\right]'''G.
\end{equation}
\end{lemma}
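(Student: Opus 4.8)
The plan is to expand $G$ in Gegenbauer polynomials as in \eqref{decomp} and compute all three quantities in \eqref{keyestimate} in terms of the coefficients $\beta, a_2, a_3, \dots$ (equivalently the weighted norms $b_k^2$), reducing the inequality to a comparison of quadratic forms in these coefficients. First I would record the action of the operators on each mode: using \eqref{22} one has $(1-x^2)[(1-x^2)^2F_k']''' = (\lambda_k^2+2\lambda_k)(1-x^2)F_k$, so by \eqref{ortho} the seminorm diagonalizes as $\lfloor G\rfloor^2 = \sum_{k\ge 1}(\lambda_k^2+2\lambda_k)\, a_k^2\int_{-1}^1(1-x^2)F_k^2 = \sum_{k\ge1}(\lambda_k^2+2\lambda_k)\,b_k^2$ (with $\lambda_1 = 4$ contributing the $\beta$ term after normalizing $\int (1-x^2)x^2 = 4/15$). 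Similarly $\int_{-1}^1(1-x^2)G^2 = \sum_{k\ge1} b_k^2$, again by orthogonality. The only term requiring care is $\int_{-1}^1 |[(1-x^2)G]'|^2$: integrating by parts once gives $-\int_{-1}^1 [(1-x^2)G]''\,(1-x^2)G$, and since $[(1-x^2)F_k]'' = (1-x^2)F_k'' - 4xF_k' - 2F_k = -(\lambda_k+2)F_k$ by \eqref{22}, this term also diagonalizes: $\int_{-1}^1|[(1-x^2)G]'|^2 = \sum_{k\ge1}(\lambda_k+2)\,b_k^2$.

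With these three expansions in hand, \eqref{keyestimate} becomes the termwise inequality
\[
\sum_{k\ge1}(\lambda_k^2+2\lambda_k)\,b_k^2 \;\le\; \Big(\frac4\alpha-6\Big)\sum_{k\ge1}(\lambda_k+2)\,b_k^2 + \frac{16}{\alpha}\sum_{k\ge1} b_k^2,
\]
so it suffices to show $\lambda_k^2 + 2\lambda_k \le (\frac4\alpha-6)(\lambda_k+2) + \frac{16}{\alpha}$ for every $k\ge1$, i.e. $\lambda_k^2 + 8\lambda_k + 12 \le \frac{4}{\alpha}(\lambda_k+6) = \frac4\alpha(\lambda_k+2) + \frac{16}{\alpha}$, which factors as $(\lambda_k+2)(\lambda_k+6) \le \frac4\alpha(\lambda_k+6)$, hence reduces to $\lambda_k + 2 \le \frac4\alpha$. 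But here is the catch: this clean termwise bound fails for large $k$ when $\alpha$ is close to $1/2$ — it only holds for $\lambda_k \le \frac4\alpha - 2$. This is exactly where Lemma \ref{132} must enter: the a priori bound $G'(x) \le 1/\alpha$ controls the high modes, and the correct statement of \eqref{keyestimate} must be using \eqref{21} to absorb the tail $\sum_{k \text{ large}}$. So the real argument is: split $G = (\text{low modes}) + (\text{high-mode remainder})$, handle the finitely many low modes by the exact quadratic-form computation above, and for the remainder use the pointwise estimate $M \le 1/\alpha$ together with an integration-by-parts identity (of the type \eqref{227}–\eqref{227III}, i.e. testing \eqref{27} against $x$) to bound $\int |[(1-x^2)G]'|^2$ and the cross terms.

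Concretely I would: (i) start from the integrated identity for the seminorm $\lfloor G\rfloor^2$ obtained by multiplying \eqref{27} (or rather the defining relation $(1-x^2)[(1-x^2)^2G']''' = \frac{24}{\alpha}(1-x^2)G + 4[(1-x^2)G]'''(1-x^2)G$, which follows from \eqref{26}–\eqref{27}) by $G$ and integrating; (ii) in the resulting expression $\lfloor G\rfloor^2 = \frac{24}{\alpha}\int(1-x^2)G^2 + 4\int[(1-x^2)G]'''(1-x^2)G^2$ — wait, more precisely $\lfloor G\rfloor^2 = \frac{24}{\alpha}\int(1-x^2)G^2 + 4\int[(1-x^2)G]''' (1-x^2) G \cdot G$; (iii) integrate the cubic term by parts, writing $[(1-x^2)G]''' (1-x^2) G^2 $ and moving derivatives, and then bound $[(1-x^2)G]''' $-type quantities using the substitution $[(1-x^2)G]'' = G'\cdot(\text{stuff})$ and Lemma \ref{132} pointwise, $G' \le 1/\alpha$. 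The main obstacle is step (iii): the cubic term $\int [(1-x^2)G]'''(1-x^2)G^2$ is not sign-definite and does not diagonalize, so one must integrate by parts to convert it into $\int |[(1-x^2)G]'|^2$ plus lower-order terms while picking up a factor $G'$ (or $[(1-x^2)G]'/(1-x^2)$-type factor) that is then estimated by $1/\alpha$ via Lemma \ref{132}; keeping track of boundary terms at $x = \pm1$ (using $G(\pm1)=0$ and the regularity from the even extension, as in the proof of Lemma \ref{132}) and getting the constants $\frac4\alpha - 6$ and $\frac{16}{\alpha}$ exactly right is the delicate computational core.
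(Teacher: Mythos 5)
Your plan does eventually land on the paper's strategy---test \eqref{27} against $G$, integrate by parts, and use the pointwise bound $G'\le \frac1\alpha$ of Lemma \ref{132} to control the non-sign-definite cubic term---and your opening observation is correct and worth making explicit: the termwise spectral inequality would require $\lambda_k+2\le\frac4\alpha$, which fails for large $k$, so \eqref{keyestimate} cannot be a general functional inequality and must use the equation \eqref{27}. However, as written the proposal has a genuine gap: the entire computational content of the lemma is deferred. The one concrete identity you assert, the pointwise relation $(1-x^2)\left[(1-x^2)^2G'\right]'''=\frac{24}{\alpha}(1-x^2)G+4[(1-x^2)G]'''(1-x^2)G$, is false: equation \eqref{27} involves $(1-x^2)^2\left[(1-x^2)G\right]''''$, which is \emph{not} $(1-x^2)P_4G$; tested against $G$ the two differ by exactly $6\int_{-1}^1\bigl|[(1-x^2)G]'\bigr|^2$ (this is the computation \eqref{lm321}), and that term is precisely where the $6$ in the final coefficient $\frac4\alpha-6$ comes from. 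Likewise you never carry out the reduction of the cubic term: the paper shows by repeated integration by parts (with all boundary terms vanishing) that $\int_{-1}^1[(1-x^2)G]'''(1-x^2)G^2=\int_{-1}^1(1-x^2)^2(G')^3$ (see \eqref{lm322}); only in this form, combined with the elementary identity $\int_{-1}^1|[(1-x^2)G]'|^2-\int_{-1}^1(1-x^2)^2(G')^2=2\int_{-1}^1(1-x^2)G^2$ of \eqref{219}, can one write $6\int(1-x^2)^2(G')^2\bigl(1-\tfrac23G'\bigr)$ and apply $1-\tfrac23G'\ge 1-\tfrac2{3\alpha}$ to obtain the exact constants $\frac4\alpha-6$ and $\frac{16}\alpha$. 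Saying one should ``pick up a factor $G'$ and estimate it by $1/\alpha$, keeping the constants right'' names the difficulty rather than resolving it, and those constants are not decorative: the later spectral argument in the proof of Theorem \ref{main1} depends on them exactly.

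A secondary point: the intermediate suggestion to split $G$ into low modes plus a high-mode remainder and use \eqref{21} ``to absorb the tail'' is a dead end---the pointwise bound on $G'$ does not interact with the Gegenbauer decomposition mode by mode, and no such splitting occurs in (or is needed for) the proof of this lemma; the mode-by-mode analysis only enters later, after \eqref{keyestimate} is established. You abandon that detour in favor of steps (i)--(iii), which is the right move, but the proof stands or falls on the two identities above, and they are missing.
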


\begin{proof}
Multiplying \eqref{27} by $G$ and integrating over $[-1,1]$, we have
   \begin{equation}\label{215}
     \int_{-1}^1((1-x^2)G)'''' (1-x^2)^2 G -  \frac{24}{\alpha}\int_{-1}^1 (1-x^2)G^2
     -4\int_{-1}^1((1-x^2)G)'''(1-x^2)G^2=0.
   \end{equation}
For the first term in \eqref{215}, after   integration by parts, one has
\begin{equation*}
     \begin{aligned}
       \int_{-1}^1((1-x^2)G)''''  (1-x^2)^2G &= (1-x^2)^2 G((1-x^2)G)'''\Big|_{-1}^1-  \int_{-1}^1
       \left((1-x^2)^2G\right)'((1-x^2)G)'''\\
       &=  -  \int_{-1}^1
        ((1-x^2)G)''' \left[(1-x^2)^2G'-4x(1-x^2)G\right]\\
    &=\int_{-1}^1 (1-x^2)\left[(1-x^2)^2G'\right]'''G +4 \int_{-1}^1
        \left[((1-x^2)G)'''x(1-x^2)G\right]
     \end{aligned}
\end{equation*}
and
\begin{equation*}
     \begin{aligned}
      \int_{-1}^1((1-x^2)G)''' x(1-x^2)G &= -\int_{-1}^1((1-x^2)G)''\left[(1-x^2)G+x((1-x^2)G)'\right]\\
      &=-\frac{x}{2}|((1-x^2)G)'|^2\Big|_{-1}^1+\int_{-1}^1|((1-x^2)G)'|^2+\frac{1}{2}\int_{-1}^1|((1-x^2)G)'|^2\\
      &=\frac{3}{2}\int_{-1}^1|((1-x^2)G)'|^2.
     \end{aligned}
   \end{equation*}
It follows that
   \begin{equation}
   \label{lm321}
     \int_{-1}^1((1-x^2)G)''''  (1-x^2)^2G=\int_{-1}^1 (1-x^2)\left((1-x^2)^2G'\right)'''G +6\int_{-1}^1|((1-x^2)G)'|^2.
   \end{equation}
For the last term in \eqref{215}, we obtain
   \begin{align}
        &\quad\int_{-1}^1((1-x^2)G)'''(1-x^2)G^2= \int_{-1}^1[-6G'-6xG''+(1-x^2)G'''](1-x^2)G^2\nonumber\\
        &=\int_{-1}^1[-6G'+(1-x^2)G'''](1-x^2)G^2 - 3\int_{-1}^1(1-x^2)^2GG'G''-\frac{3}{2}\int_{-1}^1
         (1-x^2)^2G^2G'''\nonumber\\
        &=-2\int_{-1}^1(1-x^2)(G^3)'-\frac{1}{2}\int_{-1}^1(1-x^2)^2[G^2G'''+6GG'G'']\nonumber\\
         &=-2\int_{-1}^1(1-x^2)(G^3)'-\frac{1}{2}\int_{-1}^1(1-x^2)^2\left[\frac{1}{3}(G^3)'''-2(G')^3\right]\nonumber\\
         &=-4\int_{-1}^1xG^3+\frac{1}{6}\int_{-1}^1[(1-x^2)^2]'''G^3+\int_{-1}^1(1-x^2)^2(G')^3\nonumber\\
         &=\int_{-1}^1(1-x^2)^2(G')^3.\label{lm322}
   \end{align}
By \eqref{lm321} and \eqref{lm322},  \eqref{215} is equivalent to
\begin{align}
    &\quad\int_{-1}^1 (1-x^2)\left((1-x^2)^2G'\right)'''G +6\int_{-1}^1|((1-x^2)G)'|^2\nonumber\\
    & -  \frac{24}{\alpha}\int_{-1}^1 (1-x^2)G^2
     - 4\int_{-1}^1(1-x^2)^2(G')^3=0.\label{216}
\end{align}
Note that
\begin{align}
  &\quad 6\int_{-1}^1|((1-x^2)G)'|^2-4\int_{-1}^1(1-x^2)^2(G')^3\nonumber\\
  &=6\int_{-1}^1(1-x^2)^2(G')^2\left[1-\frac{2}{3}G'\right]+12\int_{-1}^1(1-x^2)G^2.\label{217}
\end{align}
We deduce from  \eqref{21}, \eqref{216} and \eqref{217} that
\begin{equation}\label{218}
   \begin{aligned}
    \left(\frac{24}{\alpha}-12\right)&\int_{-1}^1 (1-x^2)G^2\geq  \lfloor G\rfloor^2+ \left(6-\frac{4}{\alpha}\right)\int_{-1}^1(1-x^2)^2(G')^2.
\end{aligned}
\end{equation}
It is easy to see
\begin{equation}\label{219}
  \int_{-1}^1 |[(1-x^2)G]'|^2-\int_{-1}^1(1-x^2)^2(G')^2=2\int_{-1}^1(1-x^2)G^2.
\end{equation}
 Then \eqref{218} and \eqref{219} lead to that
 \begin{equation}\label{key}
     \lfloor G\rfloor^2\leq \left(\frac{4}{\alpha}-6\right)\int_{-1}^1 |[(1-x^2)G]'|^2+\frac{16}{\alpha}\int_{-1}^1 (1-x^2)G^2.
 \end{equation}
\end{proof}

\begin{corollary}\label{cor3.3}
If $\frac{2}{3}<\alpha<1$, any axially symmetric solution to \eqref{11} must be constant.
\end{corollary}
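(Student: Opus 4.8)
The plan is to feed the key estimate of Lemma~\ref{l33} into an elementary spectral (Poincar\'e‑type) gap for the quadratic form $\lfloor\,\cdot\,\rfloor^2$, using crucially that the constant Gegenbauer mode of $G$ is absent. Throughout, one takes an axially symmetric solution of \eqref{11}, so that $u=u(x)$ solves \eqref{15} and is in particular a critical point of $I_\alpha$ with $\alpha\neq1$, and one sets $G=(1-x^2)u'$.

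First I would record the spectral bound. By Lemma~\ref{l31} the decomposition \eqref{decomp} has $a_0=0$, so $G=\sum_{k\geq1}a_kF_k$ with $a_1:=\beta$. Using \eqref{ortho} together with the identity $(1-x^2)[(1-x^2)^2F_k']'''=(\lambda_k^2+2\lambda_k)(1-x^2)F_k$, the definition \eqref{norm} gives
\[\lfloor G\rfloor^2=\sum_{k\geq1}a_k^2(\lambda_k^2+2\lambda_k)\int_{-1}^1(1-x^2)F_k^2,\qquad \int_{-1}^1(1-x^2)G^2=\sum_{k\geq1}a_k^2\int_{-1}^1(1-x^2)F_k^2.\]
Since $k\mapsto\lambda_k^2+2\lambda_k$ is increasing and $\lambda_1=4$, every coefficient obeys $\lambda_k^2+2\lambda_k\geq\lambda_1^2+2\lambda_1=24$, whence $\lfloor G\rfloor^2\geq 24\int_{-1}^1(1-x^2)G^2$.

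Next, for $\tfrac23<\alpha<1$ one has $\tfrac4\alpha-6<0$, so in \eqref{keyestimate} the term $\big(\tfrac4\alpha-6\big)\int_{-1}^1|[(1-x^2)G]'|^2$ is nonpositive and Lemma~\ref{l33} reduces to $\lfloor G\rfloor^2\leq\tfrac{16}{\alpha}\int_{-1}^1(1-x^2)G^2$. Combining this with the spectral bound yields $\big(24-\tfrac{16}{\alpha}\big)\int_{-1}^1(1-x^2)G^2\leq0$; but $\alpha>\tfrac23$ makes $24-\tfrac{16}{\alpha}>0$, forcing $\int_{-1}^1(1-x^2)G^2=0$, i.e. $G\equiv0$ on $(-1,1)$, i.e. $u'\equiv0$ and $u$ is constant. (Equivalently one could argue via \eqref{223}: since here $\beta\geq0$ anyway, the case $\beta=0$ already forces $G\equiv0$.)

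There is essentially no remaining obstacle here — the analytic substance lives entirely in Lemmas~\ref{132} and \ref{l33}, and the corollary is their one‑line spectral consequence. The two delicate points are that the $F_0$‑mode of $G$ must vanish, which is exactly Lemma~\ref{l31} and is where the hypothesis $\alpha\neq1$ enters, and that the sign of $\tfrac4\alpha-6$ flips precisely at $\alpha=\tfrac23$, which is what pins the threshold $\tfrac23$ in this corollary. Pushing the threshold below $\tfrac23$ (toward the conjectured $\tfrac12$) cannot be done by simply discarding the $\int_{-1}^1|[(1-x^2)G]'|^2$ term; one must retain it and combine \eqref{keyestimate} with the finer identities of Lemma~\ref{equality} and a sharpened form of Lemma~\ref{132}, which is the task taken up in the later sections.
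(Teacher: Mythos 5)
Your proof is correct and is essentially the paper's own argument: the paper likewise drops the $(\tfrac4\alpha-6)$-term (nonpositive for $\alpha>\tfrac23$) in Lemma \ref{l33} and compares with the first eigenvalue $24$ of $P_4$ on the complement of constants (using $a_0=0$ from Lemma \ref{l31}), forcing $G\equiv0$. You have merely written out the spectral decomposition that the paper invokes in one line.
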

\begin{proof}
Using the facts  that the first eigenvalue of Laplacian on $\mathbb{S}^4$ is $\lambda_1=4$ as in \eqref{22} and the first eigenvalue of $P_4$ is $\lambda_1(\lambda_1+2)=24$,  we obtain from Lemma \ref{l33} immediately that when $\alpha>2/3$, $G$ must be constant $0$ and hence $u$ must be  constant.
\end{proof}
\begin{proof}[Proof of Theorem \ref{main1}]
We shall use higher order  eigenfunctions in \eqref{22} to gain better estimate for $\alpha$ and prove the main theorem. We first define the following quantity
\begin{equation}\label{224}
  D:=\sum_{k=3}^\infty\left[\lambda_k(\lambda_k+2)-\left(10-\frac{4}{3\alpha}\right)(\lambda_k+2)-\frac{16}{\alpha}
    \right]b_k^2.
\end{equation}
From \eqref{223}, Lemma \ref{l33} and the definition of $G_2$, it is easy to see that
\begin{equation}\label{2241}
\begin{aligned}
  D &= \int_{-1}^1 (1-x^2)\left((1-x^2)^2G_2'\right)'''G_2
   -\left(10-\frac{4}{3\alpha}\right)\int_{-1}^1|((1-x^2)G_2)'|^2-
    \frac{16}{\alpha}\int_{-1}^1 (1-x^2)G_2^2 \\
    &\leq \lfloor G\rfloor^2 -\left(10-\frac{4}{3\alpha}\right)\int_{-1}^1 |[(1-x^2)G]'|^2-\frac{16}{\alpha}\int_{-1}^1 (1-x^2)G^2+\left(36+\frac{8}{\alpha}\right)\beta^2\int_{-1}^1(1-x^2)F_1^2\\
    &\leq \left(\frac{16}{3\alpha}-16\right)\int_{-1}^1 |[(1-x^2)G]'|^2+\left(36+\frac{8}{\alpha}\right)\frac{4\beta^2}{15} \\
    &\leq \frac{16\beta}{15}\left[\left(9+\frac{2}{\alpha}\right)\beta
    +\left(\frac{16}{3\alpha}-16\right)\left(5-\frac{1}{\alpha}\right)\right].
\end{aligned}
\end{equation}


In what follows, we assume that $\beta\neq 0$. From \eqref{220} and \eqref{221}, one has
\begin{equation}\label{228}
0<\beta<\frac{1}{\alpha}.
\end{equation}
By  \eqref{24} and \eqref{222},
\begin{equation*}
 \begin{aligned}
  b_k^2&=a_k^2\int_{-1}^1(1-x^2)F_k^2=\frac{1}{\int_{-1}^1(1-x^2)F_k^2}\left[\frac{8}{\alpha(\lambda_k^2+2\lambda_k)}\int_{-1}^1 \frac{e^{4u}}{\gamma}(1-x^2)^2F_k'\right]^2\\
  &\leq \frac{(2k+3)(k+1)(k+2)}{8}\left[\frac{8}{\alpha(\lambda_k^2+2\lambda_k)}\frac{\lambda_k}{4}
  \frac{4}{5}(1-\alpha\beta)\right]^2.\\
 \end{aligned}
\end{equation*}
Hence one has
\begin{equation}\label{229}
  b_k^2\leq\frac{8(2k+3)}{25(\lambda_k+2)}\left(\frac{1}{\alpha}-\beta\right)^2,\quad k\geq 2.
\end{equation}
 In particular, we obtain
 \begin{equation}\label{230}
   \frac{5}{7}|a_2|\leq\left(\frac{1}{\alpha}-\beta\right).
 \end{equation}
It follows from  $\alpha>\frac{1}{2}$ and \eqref{224}-\eqref{228} that
\begin{equation}\label{231}
\beta\geq\frac{16}{13}\left(1-\frac{1}{3\alpha}\right)\left(5-\frac{1}{\alpha}\right)\geq \frac{16}{13}
\end{equation}
and
\begin{equation*}
 \left(9+\frac{2}{\alpha}\right)\frac{1}{\alpha}
    +\left(\frac{16}{3\alpha}-16\right)\left(5-\frac{1}{\alpha}\right)\geq 0,
\end{equation*}
which indicates that
\begin{equation*}
  \alpha< 0.5732.
\end{equation*}

From \eqref{223}, \eqref{224} and \eqref{230},  we derive that
\begin{equation*}
\begin{aligned}
  &\quad \frac{16\beta}{15}\left[\left(9+\frac{2}{\alpha}\right)\beta
    +\left(\frac{16}{3\alpha}-16\right)\left(5-\frac{1}{\alpha}\right)\right]\\
    &\geq D\geq
    \left[\lambda_3-\left(10-\frac{4}{3\alpha}\right)-\frac{16}{\alpha(\lambda_3+2)} \right]\sum_{k=3}^\infty(\lambda_k+2)b_k^2\\
    &=8\left(1+\frac{1}{15\alpha}\right)\int_{-1}^1|[(1-x^2)G_2]'|^2\\
    &=8\left(1+\frac{1}{15\alpha}\right)\left[\int_{-1}^1|[(1-x^2)G]'|^2-\frac{8}{5}\beta^2-\frac{8}{7}a_2^2\right]\\
    &\geq 8\left(1+\frac{1}{15\alpha}\right)
    \left[\frac{16\beta}{15}\left(5-\frac{1}{\alpha}\right)-\frac{8}{5}\beta^2
    -\frac{56}{25}\left(\frac{1}{\alpha}-\beta\right)^2\right].
\end{aligned}
 \end{equation*}
Then one has
\begin{equation*}
\begin{aligned}
 &\quad 2\beta\left(\frac{16}{3\alpha}-16\right)\left(5-\frac{1}{\alpha}\right)-\frac{16\beta}{15}\left(5-\frac{1}{\alpha}\right)
 \left(15+\frac{1}{\alpha}\right)\\
 &\geq-2\left[\left(9+\frac{2}{\alpha}\right)+\frac{4}{5}\left(15+\frac{1}{\alpha}\right)\right]\beta^2
    -\frac{56}{25} \left(15+\frac{1}{\alpha}\right)\left(\frac{1}{\alpha}-\beta\right)^2.
 \end{aligned}
\end{equation*}
After some straightforward computations, we obtain
\begin{equation}\label{233}
\begin{aligned}
  & \quad 2\beta\left[\left(5-\frac{1}{\alpha}\right)
    \left(\frac{24}{5\alpha}-24\right)+\frac{1}{\alpha}\left(\frac{14}{5\alpha}+21\right) \right]\\
    &\geq \left(\frac{1}{\alpha}-\beta\right)\left[2\beta\left(\frac{14}{5\alpha}+21\right)
    -\frac{56}{25} \left(15+\frac{1}{\alpha}\right)\left(\frac{1}{\alpha}-\beta\right)\right]\\
    &\geq0,
\end{aligned}
\end{equation}
 From \eqref{228} and \eqref{231}, we conclude that
 \begin{equation}\label{234}
   0\leq \left(5-\frac{1}{\alpha}\right)
    \left(\frac{24}{5\alpha}-24\right)+\frac{1}{\alpha}\left(\frac{14}{5\alpha}+21\right)\leq10
 \end{equation}
The first inequality suggests that
\begin{equation*}
  \alpha<0.5444.
\end{equation*}
Furthermore,  it follows from \eqref{231}-\eqref{234}  that
\begin{equation}\label{236}
\begin{aligned}
  \frac{1}{\alpha}-\beta&\leq 20 \left(\frac{2002}{25}\beta-\frac{10136}{325\alpha} \right)^{-1}\beta\\
  &\leq\frac{325}{588}\beta.
  \end{aligned}
\end{equation}

Next, we fix an integer $n\geq 3$. After some computations,    we get
\begin{equation*}
  \begin{aligned}
&\quad\sum_{k=3}^n\left(\lambda_k-\lambda_{n+1}-\frac{4}{15\alpha}\right) (\lambda_k+2)b_k^2+\left(\lambda_{n+1}- 10+\frac{4}{5\alpha}\right)\left[\frac{16\beta}{15}\left(5-\frac{1}{\alpha}\right)-\frac{8}{5}\beta^2
    -\frac{8}{7}a_2^2\right]\\
    &\leq\sum_{k=3}^\infty\left[\lambda_k(\lambda_k+2)-\left(10-\frac{4}{3\alpha}\right)(\lambda_k+2)-\frac{16}{\alpha}
    \right]b_k^2\\
    &\leq \frac{16\beta}{15}\left[\left(9+\frac{2}{\alpha}\right)\beta
    +\left(\frac{16}{3\alpha}-16\right)\left(5-\frac{1}{\alpha}\right)\right].
  \end{aligned}
\end{equation*}
Hence, we have
\begin{equation*}
  \begin{aligned}
0&\leq\frac{16\beta}{15}\left(5-\frac{1}{\alpha}\right)\left(\frac{68}{15\alpha}-6-\lambda_{n+1} \right)+\frac{8}{15}\beta^2
\left(3\lambda_{n+1}-12+\frac{32}{5\alpha} \right) \\
&+\frac{8}{7} \left(\lambda_{n+1}- 10+\frac{4}{5\alpha}\right)a_2^2+\sum_{k=3}^n\left(\lambda_{n+1}-\lambda_k+\frac{4}{15\alpha}\right) (\lambda_k+2)b_k^2.
  \end{aligned}
\end{equation*}
By \eqref{229}, one further has
\begin{equation}\label{239}
\begin{aligned}
0&\leq\frac{16\beta}{15}\left(5-\frac{1}{\alpha}\right)\left(\frac{68}{15\alpha}-6-\lambda_{n+1} \right)+\frac{8}{15}\beta^2
\left(3\lambda_{n+1}-12+\frac{32}{5\alpha} \right) \\
&+\frac{8}{25}\left(\frac{1}{\alpha}-\beta\right)^2\left[7\left(\lambda_{n+1}- 10+\frac{4}{5\alpha}\right)+c_n\right],
  \end{aligned}
\end{equation}
where $$c_n:=\sum_{k=3}^n\left(\lambda_{n+1}-\lambda_k+\frac{4}{15\alpha}\right)(2k+3).$$
A straightforward calculation shows
\begin{equation*}
  c_n=\frac{1}{2}\lambda_{n+1}^2+\left(\frac{4}{15\alpha}-14\right)\lambda_{n+1}+90-(n+16)\frac{4}{15\alpha}.
\end{equation*}
Therefore,    \eqref{239}  is equivalent to
\begin{equation*}
\begin{aligned}
   0&\leq10\beta\left(5-\frac{1}{\alpha}\right)\left(\frac{68}{15\alpha}-6-\lambda_{n+1} \right)+5\beta^2
\left(3\lambda_{n+1}-12+\frac{32}{5\alpha} \right)+3\bar c_n\left(\frac{1}{\alpha}-\beta\right)^2.
\end{aligned}
\end{equation*}
Here
\begin{equation*}
 \bar c_n=\frac{1}{2}\lambda_{n+1}^2-7\lambda_{n+1}+20+\frac{4}{15\alpha}\left(\lambda_{n+1}+5-n\right).
\end{equation*}
We use the same technique as in \eqref{233} to obtain
\begin{equation}\label{242}
\begin{aligned}
    &\quad \frac{5\beta}{3}\left[-\frac{8}{\alpha^2}+ (15\lambda_{n+1}+136)\frac{1}{\alpha}-180-30\lambda_{n+1}\right]\\
&\geq\left(\frac{1}{\alpha}-\beta\right)\left[5\beta\left(3\lambda_{n+1}-12+\frac{32}{5\alpha} \right)-3\bar c_n\left(\frac{1}{\alpha}-\beta\right)\right].
\end{aligned}
\end{equation}
When $n=3$, we derive from \eqref{236} that
\begin{equation*}
\begin{aligned}
  0&<\left(\frac{1}{\alpha}-\beta\right)\left(\beta+18\frac{\beta}{\alpha}\right)\\
  &\leq \mbox{RHS  of}  \, \eqref{242}.
\end{aligned}
\end{equation*}
 A direct   calculation  suggests that
 \begin{equation}\label{243}
   \alpha< \frac{139 + \sqrt{17281}}{510}:=\alpha_3.
 \end{equation}

Next, we consider $\alpha\in[\alpha_{n+1},\alpha_n)$ with $f_{n}(\alpha_n)=0$ and $\alpha_n\in\left(\frac{1}{2},1\right)$, where
\begin{equation}\label{fn}
   f_{n}(\alpha)=-\frac{8}{\alpha^2}+136\frac{1}{\alpha}-180-15\lambda_{n+1}\left(2-\frac{1}{\alpha}\right).
\end{equation}
 It is readily checked that
\begin{equation*}
  f_{n}(\alpha_{n+1})>0\quad\mbox{ and }\quad  f_{n+1}(\alpha_{n})<0.
\end{equation*}
We now claim that  there exists some $d_n>0$ for $n=3$  or $4$, such that for $\alpha\in[\alpha_{n+1},\alpha_n)$,
 \begin{equation}\label{assert}
 \begin{cases}
   \frac{1}{\alpha}-\beta\leq \frac{d_n}{\lambda_{n+2}};\\
    15(\lambda_{n+2}-4)\beta+\frac{32\beta}{\alpha}-\frac{3\bar c_{n+1}d_n}{\lambda_{n+2}}>0.
 \end{cases}
 \end{equation}
Note that $f_{n+1}(\alpha)\leq f_{n+1}(\alpha_{n+1})= 0$  when $\alpha\in[\alpha_{n+1},\alpha_n)$. We see from \eqref{242} and \eqref{assert} that
\begin{equation}\label{contra}
  0\geq\frac{5\beta}{3} f_{n+1}(\alpha)\geq\left(\frac{1}{\alpha}-\beta\right)\left[ 15\beta(\lambda_{n+2}-4)+\frac{32\beta}{\alpha}-3\bar c_{n+1}\left(\frac{1}{\alpha}-\beta\right) \right]>0.
\end{equation}
There is a contradiction.

 We are ready to prove the assertion  \eqref{assert}. First, we  study more accurately  for  the bound in \eqref{236} when $\alpha\in[\alpha_{n+1},\alpha_n)$. Let
\begin{equation*}
   h(\alpha)=\frac{16}{13}\left(1-\frac{1}{3\alpha}\right)\left(5-\frac{1}{\alpha}\right)
\end{equation*}
and
\begin{equation*}
  \bar h(\alpha)=\left(5-\frac{1}{\alpha}\right)
    \left(\frac{24}{5\alpha}-24\right)+\frac{1}{\alpha}\left(\frac{14}{5\alpha}+21\right).
\end{equation*}
From \eqref{231}-\eqref{234}, it follows   that
\begin{equation}\label{upbd}
  \begin{aligned}
   \frac{1}{\alpha}-\beta&\leq \left[2\beta\left(\frac{98}{25\alpha}+\frac{189}{5}\right)
    -\frac{56}{25} \left(15+\frac{1}{\alpha}\right) \frac{1}{\alpha}\right]^{-1}2\bar h(\alpha_{n+1})\beta\\
    &\leq\left[2 h(\alpha_{n+1})\left(\frac{98}{25\alpha_n}+\frac{189}{5}\right)
    -\frac{56}{25} \left(15+\frac{1}{\alpha_{n+1}}\right) \frac{1}{\alpha_{n+1}}\right]^{-1}2\bar h(\alpha_{n+1})\beta\\
    &:=\gamma_n \beta.
  \end{aligned}
\end{equation}
Then we derive from \eqref{242}, \eqref{fn} and  \eqref{upbd}   that
\begin{equation}\label{recur}
   \begin{aligned}
\frac{5\beta}{3}f_{n}(\alpha)&\geq\left(\frac{1}{\alpha}-\beta\right)\left[15\beta(\lambda_{n+1}-4)+\frac{32\beta}{\alpha} -3\bar c_n\left(\frac{1}{\alpha}-\beta\right)\right]\\
&\geq \beta \left(\frac{1}{\alpha}-\beta\right)\left[15(\lambda_{n+1}-4)+\frac{32}{\alpha} -3\bar c_n\gamma_n\right]\\
&\geq  \omega_n\beta \left(\frac{1}{\alpha}-\beta\right),
   \end{aligned}
\end{equation}
 where
 \begin{equation*}
 \begin{aligned}
  \omega_n&=15(\lambda_{n+1}-4)+\frac{32}{\alpha_{n}}-\gamma_n
\left(\frac32\lambda_{n+1}^2-21\lambda_{n+1}+60+\frac{4}{5 \alpha_{n+1}}(\lambda_{n+1}+5-n)  \right)\\
&:=A_n-B_n\gamma_n.
\end{aligned}
 \end{equation*}
Thus,
\begin{equation}\label{assert1}
   \frac{1}{\alpha}-\beta\leq \frac{5f_{n}(\alpha_{n+1})}{3\omega_n}.
\end{equation}
One can use \eqref{assert1} to prove the first claim in \eqref{assert}. Then the other one is ensured by some calculations.

\par
More precisely, if $n=3$,  then $\alpha\in[\alpha_4,\alpha_3)$.
After some computations,  we find $\gamma_3<0.186$ and so
\begin{equation}\label{n=3-1}
  \frac{1}{\alpha}-\beta\leq \frac{25.553*0.137}{\lambda_5}\leq \frac{3.51}{\lambda_5}.
\end{equation}
Furthermore,
\begin{equation}\label{n=3-2}
 A_4- \frac{3.51}{\lambda_5h(\alpha_4)}B_4\ge 600.3-1682.9*0.0064\geq 589.5>0.
\end{equation}
Combining \eqref{contra} and \eqref{n=3-2}, we find
\begin{equation}\label{n=3contra}
\begin{aligned}
      0&=\frac{5\beta}{3} f_{4}(\alpha_4)\geq \frac{5\beta}{3} f_{4}(\alpha)\geq\left(\frac{1}{\alpha}-\beta\right)\left[  A_4- \frac{3.51}{\lambda_5h(\alpha_4)}B_4\right]>0,
\end{aligned}
\end{equation}
which yields that $\alpha<\alpha_4$.
\par
Similarly, if $n=4$, then  $\alpha\in[\alpha_5,\alpha_4)$. Here
\begin{equation*}
  \alpha_5=\frac{473 + \sqrt{209329}}{1800}.
\end{equation*}
 One has $\gamma_4<0.249$, so
\begin{equation*}
  \frac{1}{\alpha}-\beta\leq \frac{23.0*0.298}{\lambda_6}\leq \frac{6.855}{\lambda_6}
\end{equation*}
and
\begin{equation*}
 A_5- \frac{6.855}{\lambda_6h(\alpha_5)}B_5\ge 811.27-3383.58*0.095\geq 489.5>0.
\end{equation*}
 The previous arguments show  that $\alpha<\alpha_5$.
 This proves Theorem \ref{main1} with
   $ \alpha \in (\frac{473 + \sqrt{209329}}{1800}\approx0.51695, 1).$
   The range of $\alpha$ for Theorem \ref{main1} to hold can be slightly improved to $0.5145\le \alpha<1$, see Section 6 for discussions.
\end{proof}
\begin{remark}
  The approach used  in the case  $n=3$ or $4$ for \eqref{assert} does not work for $n\geq 5$.   The main obstacle is that $\bar c_n$ contains a term involving   $-\lambda_{n+1}^2$,   so we can not guarantee that    the  value of $\omega_n$ is positive for $n\geq 5$.  Let us take  $n=5$ as an example.  Some computations indicate that $\gamma_5\leq 0.2994$ and then
\begin{equation*}
  \omega_5= A_5-\gamma_5 B_5 \approx-632,
\end{equation*}
 which shows that there does not exist such a $d_5$   that the assertion  \eqref{assert}  holds for $n=5$.  Therefore, it seems impossible to get a contradiction similar to \eqref{n=3contra}.
\end{remark}

Next we shall show  Theorem \ref{Szego} as an immediate consequence of Theorem \ref{main1} and invariance of $\J_{\frac45}$ under a family of conformal transformations
$\phi_{P, t}, P \in \mathbb{S}^4, t>0$  of $\mathbb{S}^4$.

Following \cite{Chang95}, we define $\phi_{P, t}, P \in \mathbb{S}^4, t>0$ to be $ \phi_{P, t}(\xi)=\tilde \xi:= \pi_{P}^{-1}(ty)$, where $y=\pi_{P}(\xi)$ is the stereographic project of $\mathbb{S}^4$  from $P$ as the north pole to the equatorial plane.
In particular, we denote $\phi_{t}=\phi_{P_0, t}$ where $P_0=(1, 0, \cdots 0) $.

Given $u \in H^2(\mathbb{S}^4)$ and $t>0$,  let
$$
v(\xi)= u(\phi_t(\xi)) + \frac{5}{4^2} \ln |det(d \phi_t)|, \quad \xi \in \mathbb{S}^4.
$$

We have the following invariance property of $\J_{\frac45}$ under  the transformation
$: u \to v$.
\begin{proposition}\label{invariance}
$$
\J_{\frac45}(u)=\J_{\frac45}(v), \quad  \forall u \in H^2 (\mathbb{S}^4),  \, \, t>0.
$$
\end{proposition}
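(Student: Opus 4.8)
The plan is to establish Proposition~\ref{invariance} by reducing the invariance of $\J_{\frac45}$ to the known conformal invariance of the energy $J_1$ on $\mathbb{S}^4$ (Beckner's functional), exploiting the algebraic identity relating $\J_\alpha$ and $J_\alpha$ and the precise way the conformal group acts on the linear term and on the $\ln$-term. Concretely, write $\J_\alpha(u)=J_\alpha(u)+\frac34\ln\!\big(\int_{\mathbb{S}^4}e^{4u}\big)^2-\frac34\ln\!\big((\int e^{4u})^2-\sum_i(\int e^{4u}\xi_i)^2\big)$, so the claim decouples into: (i) the $\alpha$-independent Dirichlet-type part plus the linear part $6\int u\,dw$ behaves under $u\to v$ with a controlled anomaly, and (ii) the new $\ln$-term of $\J_{\frac45}$ transforms to absorb exactly that anomaly. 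The coefficient $\frac45$ is forced precisely so that these two cancellations are compatible — this is the arithmetic heart of the statement.

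First I would record the transformation law for the pieces of $J_\alpha$. Under $v(\xi)=u(\phi_t(\xi))+\frac{5}{16}\ln|\det(d\phi_t)|$, the quadratic Paneitz form $\frac\alpha2\int_{\mathbb{S}^4}u(P_4u)\,dw$ picks up, via the conformal covariance $P_4^{g_{\phi}}=e^{-4\omega}P_4^{g_0}$ together with the identity $P_4\omega+6=6e^{4\omega}$ satisfied by the conformal factor $\omega=\frac14\ln|\det d\phi_t|$, a change equal to $\alpha$ times a universal expression; the linear term $6\int v\,dw$ and the term $\frac34\ln\int e^{4v}$ change in a way that, when $\alpha=1$, the total change is zero — this is exactly the classical statement in \cite{Beckner},\cite{Chang95} that $J_1(u)=J_1(v)$. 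For general $\alpha$ one therefore gets $J_\alpha(v)-J_\alpha(u)=(\alpha-1)\cdot\Theta(u,\phi_t)$ where $\Theta(u,\phi_t)=\frac12\int_{\mathbb{S}^4}\big(u\,P_4\omega+\omega\,P_4\omega\big)\cdot(\text{suitable combination})\,dw$; the key point, obtained by integrating $P_4\omega=6(e^{4\omega}-1)$ against $u$ and $\omega$, is that $\Theta$ can be rewritten purely in terms of $\int e^{4u}$ and $\int e^{4u}\xi_i$ after the change of variables $\xi\mapsto\phi_t(\xi)$, because $e^{4\omega}\,dw$ is the pull-back of the volume and $\phi_t$ acts on the ambient coordinates $\xi_i$ by a Möbius (projective-linear on $\mathbb{R}^5$) map.

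Next I would compute the transformation of the extra $\ln$-term. Using that $\int_{\mathbb{S}^4}e^{4v}\psi\,dw=\int_{\mathbb{S}^4}e^{4u}\,(\psi\circ\phi_t^{-1})\,dw$ for any test function $\psi$ (change of variables, the conformal factor being absorbed), and that the five functions $1,\xi_1,\dots,\xi_5$ span, together, a representation on which $\phi_t$ acts by an explicit Lorentz-type matrix $L_t\in O(1,5)$ preserving the quadratic form $q_0^2-q_1^2-\cdots-q_5^2$, one sees that the vector $\big(\int e^{4u},\int e^{4u}\xi_1,\dots\big)$ is sent to $\big(\int e^{4v},\int e^{4v}\xi_1,\dots\big)$ by $L_t$ up to a positive scalar $\mu_t$; hence $\big(\int e^{4v}\big)^2-\sum_i\big(\int e^{4v}\xi_i\big)^2 = \mu_t^{-2}\big[(\int e^{4u})^2-\sum_i(\int e^{4u}\xi_i)^2\big]$, so the extra $\ln$-term changes by an explicit constant $-\frac34\cdot(-2\ln\mu_t)=\frac32\ln\mu_t$ that depends only on $t$, not on $u$. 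Finally I would identify $\mu_t$ with the jacobian normalization and check that $(\alpha-1)\Theta(u,\phi_t)$ with $\alpha=\frac45$ is exactly cancelled by $\frac32\ln\mu_t$ plus the discrepancy coming from comparing $\frac34\ln(\int e^{4v})^2$ with $\frac34\ln(\int e^{4u})^2$; equating the $u$-dependent parts pins down $\alpha=\frac45$ and the $u$-independent parts cancel by the explicit value of $\mu_t$.

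\textbf{The main obstacle} I anticipate is bookkeeping the $u$-dependent anomaly $\Theta(u,\phi_t)$ and showing it is genuinely absorbed: one must verify that after the change of variables the quantity $\frac12\int u\,P_4\omega\,dw + \frac12\int\omega\,P_4\omega\,dw$ really collapses to an affine function of $\ln\int e^{4u}$ and of $\ln\big[(\int e^{4u})^2-\sum(\int e^{4u}\xi_i)^2\big]$ with coefficients matching $\frac34$ and $-\frac34$ scaled by $(\alpha-1)=-\frac15$. This is where the special weight $\frac{5}{16}$ in the definition of $v$ (the conformal weight for $\ln|\det d\phi_t|$ on $\mathbb{S}^4$, i.e. $\frac{5}{4^2}$) and the identity $P_4\omega+6=6e^{4\omega}$ must be used in tandem; a clean way to organize it is to differentiate in $t$ at $t=1$, reducing everything to a single infinitesimal conformal vector field $X=\nabla\xi_1$ on $\mathbb{S}^4$, check the derivative identity $\frac{d}{dt}\big|_{t=1}\J_{\frac45}(v)=0$ using the Euler–Lagrange structure and the Kazdan–Warner relation \eqref{kw}, and then integrate in $t$; the flow being connected, $t=1$ invariance plus the derivative identity for all $P$ gives the full group invariance. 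I expect steps (ii)–(iii) to be routine linear algebra on $O(1,5)$, while step (i), the vanishing of the $t$-derivative, is the crux and is precisely the point at which $\alpha=\frac45$ enters.
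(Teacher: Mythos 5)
Your central reduction is based on a false premise, and this is a genuine gap rather than a presentational issue. You assert that for the transformation actually used in Proposition~\ref{invariance}, namely $v=u\circ\phi_t+\tfrac{5}{16}\ln|\det(d\phi_t)|$, the change of $J_\alpha$ vanishes at $\alpha=1$ ("this is exactly the classical statement that $J_1(u)=J_1(v)$"), and hence $J_\alpha(v)-J_\alpha(u)=(\alpha-1)\Theta(u,\phi_t)$. But the classical invariance of $J_1$ holds only for the standard weight $\tfrac14\ln|\det(d\phi_t)|$, not for $\tfrac{5}{16}$. Concretely, take $u=0$: then $v=\tfrac{5}{16}\ln|\det(d\phi_t)|=-\tfrac54\ln(1-a\xi_1)+C$ with $a=\tfrac{1-t^2}{1+t^2}$, which is not of the extremal form $-\ln(1-\zeta\cdot\xi)+C$, so by the equality case of \eqref{beckner} one has $J_1(v)>0=J_1(0)$ whenever $a\neq0$. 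The change of $J_\alpha$ under $u\mapsto v$ is indeed affine in $\alpha$, but it does not vanish at $\alpha=1$, so the anomaly is not of the form $(\alpha-1)\Theta$ and the bookkeeping you describe cannot close as stated. A second error of the same origin: with the weight $\tfrac{5}{16}$ the identity $\int e^{4v}\psi\,dw=\int e^{4u}(\psi\circ\phi_t^{-1})\,dw$ is false; the correct change of variables carries the extra factor $|\det(d\phi_t^{-1})|^{-\frac14}(\tilde\xi)=\tfrac{1+a\tilde\xi_1}{\sqrt{1-a^2}}$. Ironically, that factor is precisely what makes your Lorentz picture true: being affine in $\tilde\xi_1$, it turns the moment vector $\bigl(\int e^{4v}dw,\int e^{4v}\xi_i\,dw\bigr)$ into an exact $O(1,5)$-boost (no scalar $\mu_t$ is needed) of $\bigl(\int e^{4u}dw,\int e^{4u}\tilde\xi_i\,dw\bigr)$, whereas with your formula $\xi_i\circ\phi_t^{-1}$ is a M\"obius, not linear, function of $\tilde\xi$ and no linear action on the span of $1,\xi_1,\dots,\xi_5$ follows.

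For comparison, the paper's proof splits $\J_{\frac45}$ differently and avoids any anomaly cancellation: it shows (i) the invariance of $\tfrac25\int_{\mathbb{S}^4}u(P_4u)\,dw+6\int_{\mathbb{S}^4}u\,dw$ under the modified transformation, following part (a) of Theorem 4.1 in \cite{Chang95} (this is where the pair $\alpha=\tfrac45$, weight $\tfrac{5}{16}=\tfrac14\cdot\tfrac1\alpha$ enters), and (ii) the exact invariance of $(\int e^{4v}dw)^2-\sum_i(\int e^{4v}\xi_i\,dw)^2$ by the explicit M\"obius formulas together with the factorization into $\int e^{4v}(1-\xi_1)\,dw$ and $\int e^{4v}(1+\xi_1)\,dw$, whose pullbacks acquire reciprocal factors $\sqrt{\tfrac{1\mp a}{1\pm a}}$. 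Your alternative of differentiating in $t$ and invoking \eqref{kw} leaves exactly the crux unproved: \eqref{kw} is an identity tied to the conformal invariance of $J_1$, and deducing $\tfrac{d}{dt}\J_{\frac45}(v_t)=0$ for arbitrary $u\in H^2(\mathbb{S}^4)$ (not merely for critical points) would require precisely the computation in (i)--(ii) that your proposal does not supply.
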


\begin{proof}
The invariance of
$$ \frac{2}{5} \int_{\mathbb{S}^4} u (P_4 u) dw+6\int_{\mathbb{S}^4} udw
$$
can be proven similarly  as part (a) of  Theorem 4.1 in \cite{Chang95}.
We only need to check that
\begin{equation}\label{invariance-expon}
(\int_{\mathbb{S}^4}e^{4v}dw)^2 -\sum_{i=1}^5 (\int_{\mathbb{S}^4}e^{4v} \xi_i dw)^2=
(\int_{\mathbb{S}^4}e^{4u}dw)^2- \sum_{i=1}^5(\int_{\mathbb{S}^4}e^{4u} {\tilde \xi}_i dw)^2.
\end{equation}
Indeed, after a proper rotation, we may assume that $P=P_0$.  Letting $ a= \frac{1-t^2}{1+t^2} $,  we have
$$
 \xi_1= \frac{a+{\tilde \xi}_1} { 1+ a {\tilde \xi}_1}, \, \quad  \xi_i= \frac{\sqrt{1-a^2} {\tilde \xi}_i} { 1+ a {\tilde \xi}_1}, \quad i=2, 3, 4, 5
$$
and
$$
|det(d\phi_t)|^{\frac14}(\xi)=\frac{\sqrt{1-a^2} }{1-a \xi_1}= |det(d\phi_t^{-1})|^{-\frac14}({\tilde \xi})=
\frac{1+a {\tilde \xi}_1} { \sqrt{1-a^2} }.
 $$

Hence
\begin{align*}
& (\int_{\mathbb{S}^4}e^{4v}dw)^2 - (\int_{\mathbb{S}^4}e^{4v} \xi_1 dw)^2\\
=&\left(\int_{\mathbb{S}^4}e^{4u(\phi_t(\xi))} |det( d\phi_t)(\xi)|^{1+\frac14} dw \right)^2 - \left(\int_{\mathbb{S}^4}e^{4u(\phi_t(\xi)}  |det (d\phi_t)(\xi)|^{1+\frac14} \xi_1   dw \right)^2\\
= &\left(\int_{\mathbb{S}^4}e^{4u} |det( d\phi_t^{-1} )|^{-\frac14}({\tilde \xi}) dw\right)^2 - \left(\int_{\mathbb{S}^4}e^{4u}  |det (d\phi_t^{-1})|^{-\frac14} ({\tilde \xi})  \xi_1  dw \right)^2  \\
=&\left(\int_{\mathbb{S}^4}e^{4u} |det( d\phi_t^{-1} )|^{-\frac14}({\tilde \xi}) (1 - \xi_1 ) dw \right) \times \left(\int_{\mathbb{S}^4}e^{4u} |det( d\phi_t^{-1} )|^{-\frac14}({\tilde \xi}) (1+ \xi_1 ) dw \right)\\
=&\left(\int_{\mathbb{S}^4}e^{4u}  (1 - {\tilde \xi}_1 ) dw \right) \times \left(\int_{\mathbb{S}^4}e^{4u}  (1+ {\tilde \xi}_1 ) dw \right)\\
=&(\int_{\mathbb{S}^4}e^{4u}dw)^2- (\int_{\mathbb{S}^4}e^{4u} {\tilde \xi}_1 dw)^2
\end{align*}
and for $i=2, 3, 4, 5$
$$
\int_{\mathbb{S}^4}e^{4v} \xi_i dw
=\int_{\mathbb{S}^4}e^{4u(\phi_t(\xi))}  |det (d\phi_t)(\xi)|^{1+\frac14} \xi_i   dw
=\int_{\mathbb{S}^4}e^{4u} {\tilde \xi}_i dw.
$$
Therefore,  \eqref{invariance-expon} holds.
This completes the proof.
\end{proof}

When $P=P_0$ is chosen to coincide with the direction of the center of mass of $e^{4u}$, we also  observe  from the above proof  that
$$
\int_{\mathbb{S}^4}e^{4v} \xi_i dw=
  \int_{\mathbb{S}^4}e^{4u}  {\tilde \xi}_i dw=0, \quad i=2, 3, 4, 5
$$
  and
$$
\int_{\mathbb{S}^4}e^{4v} \xi_1 dw=
\frac{1}{\sqrt{1-a^2}}  \int_{\mathbb{S}^4}e^{4u}  (a + {\tilde \xi}_1 ) dw=0
$$
if we also choose
$
a= -\frac{\int_{\mathbb{S}^4}e^{4u} {\tilde \xi}_1  dw}{ \int_{\mathbb{S}^4}e^{4u} dw }.
$

Then, for any $u \in H^2(\mathbb{S}^4)$,  there is a $\phi_{P, t}$ such that
$$
v(\xi)= u(\phi_{P,t}(\xi)) + \frac{5}{4^2} \ln |det(d \phi_{P,t})|, \quad \xi \in \mathbb{S}^4
$$
belongs to $ \mathfrak L$.  Moreover, we have that
$\J_{\alpha} (u)= J_{\alpha}(v)$ for  $ v \in \mathfrak L$.

Then Theorem \ref{Szego} follows
immediately from Theorem \ref{main1} and Proposition \ref{invariance}.

We note that  a similar but more general Szeg\"o limit theorem for $u\in H^1(\mathbb{S}^2)$ is proven in \cite{CG} using a  variational  method with a mass center constraint, in combination with the improved Moser-Trudinger inequality in \cite{GM}.  In general,   similar Szeg\"o limit theorem should be true  for $\mathbb{S}^n, n\ge 5$ with $\alpha=\frac45$ replaced by $\alpha=\frac{n}{n+1}$, provided  that an improved Beckner's inequality could be proven for  $\alpha \le \frac{n}{n+1}$.  Note that  a counter part of   Proposition \ref{invariance} always holds for general $\mathbb{S}^n$.

\vskip4mm
{\section{Pohozaev-type Identities and Classification Result}}
 \setcounter{equation}{0}
\par

Pohozaev-type identities are  very powerful tools in studying the symmetry of solutions to semilinear elliptic equations. They play a vital role in proving classification results (see, e.g., \cite{Gidas81}, \cite{Schoen88}). Recently  Shi et. al. \cite{tian19} obtain several Pohozaev-type identities  and apply them to prove the uniqueness of axially symmetric solution of mean field equation on $ \mathbb{S}^2$ for $\alpha$. In this section, we first list several useful Pohozaev-type identities corresponding to solutions of \eqref{11}, then we prove Theorem \ref{main2} based on these identities.
\vskip4mm
We now prove Proposition \ref{pro31}. Motivated by \cite{CY87,KW74}, since \eqref{11} is invariant under adding a constant, we can normalize
 $\int_{\mathbb{S}^4} e^{4u}dw=1$. Then, \eqref{11} becomes
\begin{equation}\label{31}
  \alpha P_4 u+6\left(1- e^{4u} \right)=0, \quad \xi \in  \mathbb{S}^4.
\end{equation}
 Multiply \eqref{31} by $\xi_i,\ i=1,2,\cdots,5$ and integrate to get
  \begin{equation*}
  {  \alpha\int_{\mathbb{S}^4}(P_4 u)\xi_idw=6\int_{\mathbb{S}^4}e^{4u}\xi_idw.}
  \end{equation*}
 Note that $-\Delta \xi_i=\lambda_1 x_1=4x_1$ and $P_4 \xi_i=\lambda_1(\lambda_1+2)\xi_i$, $i=1,2,\cdots,5$. We further have
 \begin{equation*}
 {  4\alpha\int_{\mathbb{S}^4} u \xi_idw= \int_{\mathbb{S}^4}e^{4u}\xi_idw.}
 \end{equation*}
On the other hand, let
\begin{equation*}
  Q=\frac{6}{\alpha}-\left(\frac{1}{\alpha}-1\right)6e^{-4u}.
\end{equation*}
Then \eqref{31} can  be written as
\begin{equation}\label{33}
  P_4u+6= Qe^{4u}.
\end{equation}
By the Kazdan-Warner condition \eqref{kw},
one obtains
\begin{equation*}
  0= 24\left(\frac{1}{\alpha}-1\right)\int_{\mathbb{S}^4}\langle\nabla u, \nabla \xi_i\rangle dw=-24\left(\frac{1}{\alpha}-1\right)\int_{\mathbb{S}^4}  u \Delta \xi_idw=96\left(\frac{1}{\alpha}-1\right)\int_{\mathbb{S}^4}  u  \xi_idw.
\end{equation*}
Therefore,
\begin{equation}\label{35}
 \int_{\mathbb{S}^4}  u  \xi_idw=0 \quad \mbox{ and }\quad\int_{\mathbb{S}^4}  e^{4u}  \xi_idw=0\ \quad i=1,2,\cdots,5
\end{equation}
whenever  $\alpha\neq1$.
Hence, we can conclude Proposition \ref{pro31}.

\begin{remark}
The identities in \eqref{35} hold true for
  \begin{equation*}
    \alpha P_n u+(n-1)!\left(1-\frac{e^{nu}}{{\int_{\mathbb{S}^n}e^{nu}}dw}\right)=0, \quad \xi \in  \mathbb{S}^n;
  \end{equation*}
 for all $n\geq 2$  by the same method. Here
\begin{equation*}
   P_n=\begin{cases}
       \prod_{k=0}^{\frac{n-2}{2}}(-\Delta+k(n-k-1)),&\mbox{ for $n$ even};\\
       \left(-\Delta+\left(\frac{n-1}{2}\right)^2\right)^{1/2}\prod_{k=0}^{\frac{n-3}{2}}(-\Delta+k(n-k-1)),&\mbox{ for $n$ odd}.
   \end{cases}
\end{equation*}
\end{remark}
\vskip4mm
Note that Theorem \ref{main2} can be proved by the integral identity \eqref{223}. Here we provide a more essential proof using Pohozaev-type identy.

Now we focus on axially symmetric solutions to \eqref{31}. It is readily checked that $u$ satisfies
\begin{equation}\label{36}
   \alpha (1-x^2) \left[  (1-x^2)^2u'\right]'''+6 (1-x^2) (1- e^{4u})=0,\quad x\in(-1,1).
\end{equation}
Multiplying \eqref{36} by $F_2=\frac{1}{4}(5x^2-1)$ and integrating, we get
\begin{equation*}
\alpha \int_{-1}^1\left[  (1-x^2)^2u'\right]'''(1-x^2)F_2-6  \int_{-1}^1(1-x^2) e^{4u}F_2=0,
\end{equation*}
or
\begin{equation*}
  \alpha \int_{\mathbb{S}^4}(P_4u) F_2dw -6  \int_{\mathbb{S}^4} e^{4u}F_2dw =0.
\end{equation*}
Note that
\begin{equation*}
  \alpha \int_{\mathbb{S}^4}(P_4u) F_2dw=\alpha \int_{\mathbb{S}^4} u P_4 F_2dw=\alpha \lambda_2(\lambda_2+2)\int_{\mathbb{S}^4}u F_2dw=120 \alpha \int_{\mathbb{S}^4}u F_2dw.
\end{equation*}
Therefore,
\begin{equation}\label{mass}
  \int_{\mathbb{S}^4}u F_2dw=\frac{1}{20\alpha}  \int_{\mathbb{S}^4}e^{4u} F_2dw.
\end{equation}
Multiply \eqref{31} by $ \langle\nabla u,\nabla F_2\rangle$ and integrate,
\begin{equation}\label{311}
  \begin{aligned}
     \alpha\int_{\mathbb{S}^4}(P_4 u)\langle\nabla u,\nabla F_2\rangle dw &=6\int_{\mathbb{S}^4}(e^{4u}-1)\langle\nabla u,\nabla F_2\rangle dw\\
     &=6\int_{\mathbb{S}^4}\frac{1}{4}  \langle \nabla  e^{4u},\nabla F_2\rangle-6\int_{\mathbb{S}^4} \langle\nabla u,\nabla F_2\rangle dw\\
     &= 6\int_{\mathbb{S}^4}\left( -\frac{1}{4}e^{4u} \Delta F_2 +  u \Delta F_2 \right)dw\\
     &=3\int_{\mathbb{S}^4}\left(  5-\frac{1}{\alpha}   \right)e^{4u}  F_2dw.
  \end{aligned}
\end{equation}
\par
Direct computations show that in the spherical coordinate
\begin{equation*}
  \begin{aligned}
    \nabla u=((1-x^2)u', 0, 0, 0),\qquad \nabla F_2=((1-x^2)F_2', 0, 0, 0);\\
   \langle\nabla u,\nabla F_2\rangle=g_{11}(1-x^2)^2u'F_2'=(1-x^2)u'F_2',
  \end{aligned}
\end{equation*}
which together with \eqref{311} imply  that
\begin{equation}\label{p1}
 \int_{\mathbb{S}^4}(P_4 u)\langle\nabla u,\nabla F_2\rangle dw=\frac{5}{2}\int_{-1}^1\left[  (1-x^2)^2u'\right]'''x(1-x^2)^2u'.
\end{equation}
Applying integration by parts to \eqref{p1}, we get
\begin{equation}\label{312}
\begin{aligned}
  \int_{\mathbb{S}^4}(P_4 u)\langle\nabla u,\nabla F_2\rangle dw&=- \frac{15}{8}\int_{-1}^1\left[  (1-x^2)^2u'\right]''\left[(1-x^2)^2u'+x((1-x^2)^2u')'\right]\\
  &=\frac{15}{8}\int_{-1}^1|((1-x^2)^2u')'|^2 -\frac{5x}{4}\big|[(1-x^2)^2u']'\big|^2\Big|_{-1}^1+\frac{15}{16}\int_{-1}^1|((1-x^2)^2u')'|^2\\
  &=\frac{45}{16}\int_{-1}^1|((1-x^2)^2u')'|^2.
\end{aligned}
\end{equation}
Hence we obtain the following Pohozaev type inequality for solutions to \eqref{36}.
\begin{equation}\label{PI}
\int_{-1}^1|((1-x^2)^2u')'|^2= \frac{4}{5\alpha} \left(  5-\frac{1}{\alpha}   \right) { \int_{-1}^1 (1-x^2)e^{4u}  F_2. }
\end{equation}
This is equivalent to \eqref{223}.
When $\alpha=\frac{1}{5}$,  it follows that
 $$
 \int_{-1}^1|((1-x^2)^2u')'|^2=0.
 $$
 We further have
 \begin{equation*}
   ((1-x^2)^2u')' \equiv0,\quad x\in(1-,1).
 \end{equation*}
   Therefore,
   \begin{equation*}
    (1-x^2)^2u'\equiv C,\quad x\in(1-,1)
   \end{equation*}
   for some constants $C$.   Since the term $ (1-x^2)u'$ is bounded on $(-1,1)$, we have $C=0$.

  Finally
   \begin{equation*}
     u'\equiv 0 \mbox{ and so }u\equiv  \mbox{Constant}, \quad x\in(1-,1).
   \end{equation*}
   Theorem \ref{main2} has been proven.

 \vskip4mm
{\section{ Bifurcation}}
\par
In this section we shall obtain results on bifurcation curves to \eqref{15} in general for $\alpha>0$ and in particular for $\alpha  \in (\frac15, \frac12)$.
We shall first apply  the standard bifurcation theory to analyze  the local bifurcation diagram.   Let us recall the following general theorem.
\par
\begin{theorem}{\rm(\cite[Theorem 1.7]{CR71})}\label{local}
 Let $X$,$Y$ be  Hilbert  spaces, $V$ a neighborhood of $0$ in $X$ and  $F:(-1,1)\times V\rightarrow Y$ a map with the following properties:
\begin{itemize}
\item[(1)]  $F(t,0)=0$ for any $t$;
\item[(2)]  $\partial_{t}F$,$\partial_{x}F$ and $\partial_{t,x}^{2}F$ exist and are continuous;
\item[(3)] $\ker(\partial_{x}F(0,0))=\mbox{span}\{w_{0}\}$ and $Y/\mathcal{R}(\partial_{x}F(0,0))$  are one-dimensional;
\item[(4)] $\partial_{t,x}^{2}F(0,0)w_0\not\in\mathcal{R}(\partial_{x}F(0,0))$.
\end{itemize}
If $Z$ is any complement of $ \ker(\partial_{x}F(0,0))$ in $X$.
Then there exists $\varepsilon_{0}>0$, a neighborhood  of $(0,0)$ in $U\subset(-1,1)\times X$ and continuously differentiable maps $\eta:(-\varepsilon_{0},\varepsilon_{0})\rightarrow\mathbb{R}$ and $z:(-\varepsilon_{0},\varepsilon_{0})\rightarrow Z$ such that $\eta(0) =0,\
z(0)=0$ and
\begin{equation}  \nonumber
F^{-1}(0)\cap U\setminus((-1,1)\times\{0\})=\{(\eta(\varepsilon),\varepsilon w_{0}+\varepsilon z(\varepsilon))\mid\varepsilon\in(-\varepsilon_{0},\varepsilon_{0})\}.\\
\end{equation}
\end{theorem}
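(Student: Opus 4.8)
The plan is to prove Theorem~\ref{local} by the classical Lyapunov--Schmidt reduction, exactly as in \cite{CR71}. Write $L=\partial_{x}F(0,0)$. Since $\ker L=\mathrm{span}\{w_{0}\}$ is one dimensional and $Y/\mathcal R(L)$ is one dimensional, we have $X=\mathrm{span}\{w_{0}\}\oplus Z$ with $Z$ the complement from the statement, and we fix a one dimensional complement $W$ of $\mathcal R(L)$ in $Y$ together with the bounded projection $P\colon Y\to\mathcal R(L)$ along $W$, so that $I-P$ maps $Y$ onto $W$. Writing the unknown as $x=\varepsilon w_{0}+z$ with $\varepsilon\in\mathbb R$, $z\in Z$, the equation $F(t,\varepsilon w_{0}+z)=0$ is equivalent to the pair consisting of the \emph{range equation} $PF(t,\varepsilon w_{0}+z)=0$ and the \emph{bifurcation equation} $(I-P)F(t,\varepsilon w_{0}+z)=0$.

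First I would solve the range equation. Set $\Phi(t,\varepsilon,z)=PF(t,\varepsilon w_{0}+z)$ on a neighborhood of $(0,0,0)$. Then $\Phi(0,0,0)=0$ and $\partial_{z}\Phi(0,0,0)=PL|_{Z}\colon Z\to\mathcal R(L)$ is a bounded bijection: it is injective because $\ker L\cap Z=\{0\}$, and surjective because $\mathcal R(L)=L(X)=L(Z)$ (as $Lw_{0}=0$) while $P$ is the identity on $\mathcal R(L)$. By the implicit function theorem there is a unique $C^{1}$ map $(t,\varepsilon)\mapsto z(t,\varepsilon)\in Z$ near $(0,0)$ with $z(0,0)=0$ and $\Phi(t,\varepsilon,z(t,\varepsilon))\equiv0$. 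Since $F(t,0)=0$ gives $\Phi(t,0,0)=0$, uniqueness forces $z(t,0)=0$ for all $t$ near $0$; and differentiating $\Phi(0,\varepsilon,z(0,\varepsilon))=0$ in $\varepsilon$ at $\varepsilon=0$, using $Lw_{0}=0$ and injectivity of $PL|_{Z}$, gives $\partial_{\varepsilon}z(0,0)=0$. Consequently $z(t,\varepsilon)=\varepsilon\,\zeta(t,\varepsilon)$ with $\zeta(t,\varepsilon)=\int_{0}^{1}\partial_{\varepsilon}z(t,s\varepsilon)\,ds$ continuous and $\zeta(0,0)=0$.

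Next I would reduce to a scalar equation. Substituting into the bifurcation equation, using $F(t,0)=0$, $\varepsilon w_{0}+z(t,\varepsilon)=\varepsilon\bigl(w_{0}+\zeta(t,\varepsilon)\bigr)$ and the fundamental theorem of calculus, one gets
\[
(I-P)F\bigl(t,\varepsilon w_{0}+z(t,\varepsilon)\bigr)=\varepsilon\,h(t,\varepsilon),
\]
where
\[
h(t,\varepsilon):=(I-P)\int_{0}^{1}\partial_{x}F\bigl(t,s\varepsilon(w_{0}+\zeta(t,\varepsilon))\bigr)\bigl[w_{0}+\zeta(t,\varepsilon)\bigr]\,ds
\]
is continuous and, by hypothesis~(2), of class $C^{1}$ in $t$. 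Identifying $W\cong\mathbb R$, the nontrivial solutions near $(0,0)$ are precisely the zeros of the scalar map $h$. Now $h(0,0)=(I-P)L\,w_{0}=0$, and differentiating in $t$ at $(0,0)$, where the term containing $L\,\partial_{t}\zeta(0,0)$ lies in $\mathcal R(L)$ and is killed by $I-P$ (and where $\partial_{\varepsilon}z(0,0)=0$ is used to discard $\partial^{2}_{t,x}F(0,0)\zeta(0,0)$), one finds $\partial_{t}h(0,0)=(I-P)\,\partial^{2}_{t,x}F(0,0)\,w_{0}\neq0$ by the transversality hypothesis~(4). A final application of the implicit function theorem to $h(t,\varepsilon)=0$ in the variable $t$ yields $\varepsilon_{0}>0$ and a $C^{1}$ map $\eta\colon(-\varepsilon_{0},\varepsilon_{0})\to\mathbb R$ with $\eta(0)=0$ such that, near $(0,0)$, $h(t,\varepsilon)=0$ if and only if $t=\eta(\varepsilon)$. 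Putting $z(\varepsilon):=\zeta(\eta(\varepsilon),\varepsilon)$ (so $z(0)=0$) then gives the stated description of $F^{-1}(0)\cap U$ as the trivial branch together with $\{(\eta(\varepsilon),\varepsilon w_{0}+\varepsilon z(\varepsilon))\}$.

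The main obstacle is not any single computation but the regularity bookkeeping in the reduction: one must check that after dividing out the factor $\varepsilon$ the reduced map $h$ remains differentiable in $t$ with derivative continuous up to $\varepsilon=0$, so that the second application of the implicit function theorem is justified. This is exactly why the hypotheses require the mixed derivative $\partial^{2}_{t,x}F$ to be continuous rather than only $F$ to be $C^{1}$; the remaining ingredients are the routine bijectivity-and-transversality argument above.
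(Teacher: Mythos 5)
This statement is quoted from Crandall--Rabinowitz \cite[Theorem 1.7]{CR71}; the paper itself gives no proof, so there is nothing internal to compare against. Your Lyapunov--Schmidt reduction is the standard textbook route (essentially Kielh\"ofer's treatment in \cite{KH12}), whereas the original proof in \cite{CR71} is different in an important way: it applies the implicit function theorem \emph{once}, to the divided map $g(t,\alpha,z)=\alpha^{-1}F\bigl(t,\alpha(w_{0}+z)\bigr)$ for $\alpha\neq0$, extended by $g(t,0,z)=\partial_{x}F(t,0)(w_{0}+z)$, solving for the pair $(t,z)$ as a function of $\alpha$. That device is precisely what makes the minimal hypotheses (only $\partial_{t}F,\partial_{x}F,\partial^{2}_{t,x}F$ continuous) sufficient, because one never has to differentiate a composite containing the implicitly defined $z(t,\varepsilon)$.

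The genuine gap in your argument is the point you yourself flag and then dismiss: the claim that $h$ is $C^{1}$ in $t$ ``by hypothesis (2).'' If you differentiate your integral formula for $h(t,\varepsilon)$ in $t$, the chain rule produces, for $\varepsilon\neq0$, a term of the form $\partial^{2}_{x}F(\cdot)\bigl[s\varepsilon\,\partial_{t}\zeta,\,w_{0}+\zeta\bigr]$ and a term $\partial_{x}F(\cdot)\bigl[\partial_{t}\zeta\bigr]$: the first needs a second $x$-derivative of $F$, which is not assumed, and the second needs $\partial_{t}\zeta=\varepsilon^{-1}\partial_{t}z$, whose existence and continuity up to $\varepsilon=0$ you have not established (your first IFT only gives $z\in C^{1}$). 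Continuity of $\partial^{2}_{t,x}F$ alone does not deliver this. The step can be repaired without extra hypotheses: for $\varepsilon\neq0$ compute $\partial_{t}h=\varepsilon^{-1}(I-P)\bigl[\partial_{t}F(t,\varepsilon w_{0}+z)+\partial_{x}F(t,\varepsilon w_{0}+z)\,\partial_{t}z\bigr]$, use $F(t,0)\equiv0$ to write $\partial_{t}F(t,x)=\int_{0}^{1}\partial^{2}_{t,x}F(t,sx)[x]\,ds$, deduce from the differentiated range equation that $\partial_{t}z(t,\varepsilon)=O(\varepsilon)$ with continuous quotient, and conclude that $\partial_{t}h$ extends continuously to $\varepsilon=0$ with the value $(I-P)\partial^{2}_{t,x}F(0,0)w_{0}$ at the origin; this bookkeeping is exactly the missing content. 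Two further remarks: your bounded projection $P$ presupposes that $\mathcal{R}(L)$ is closed (true, since a finite-codimensional range of a bounded operator is automatically closed, but it should be said); and your final IFT yields $\eta\in C^{1}$ only if $h$ is also differentiable in $\varepsilon$, which again requires second $x$-derivatives of $F$. Under hypotheses (1)--(4) alone one obtains only continuous $\eta$ and $z$ --- which is what \cite{CR71} actually asserts; the $C^{1}$ conclusion in the statement as transcribed needs additional smoothness such as $F\in C^{2}$, which does hold in the paper's application.
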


%
%
%
%

Recall that the  shape of the above local  bifurcating branch can be further described  by the following theorem (see, e.g., \cite[I.6]{KH12}):
\begin{theorem}\label{local2}
In the setting of  Theorem \ref{local},  let $\psi\neq0\in Y^{-1}$  satisfy
\begin{equation*}
  \mathcal{R}(\partial_{x}F(0,0))=\{y\in Y\mid\langle\psi,y\rangle=0\},
\end{equation*}
   where $Y^{-1}$ is the dual space of $Y$.  Then we have
    \begin{equation*}\label{derivative}
 \eta'(0) =-\frac{\langle\partial^{2}_{x,x}F(0,0)[w_{0},w_{0}],
 \psi\rangle}{2\|w_0\|\langle\partial^{2}_{t,x}F(0,0)w_{0},\psi\rangle}.
\end{equation*}
  Furthermore,  the  bifurcation is transcritical provided that   $ \eta'(0)\neq 0$.

\end{theorem}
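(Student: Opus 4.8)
The plan is to substitute the bifurcating branch produced by Theorem~\ref{local} into the equation $F=0$, differentiate the resulting identity twice in the parameter at $\varepsilon=0$, and then pair with $\psi$ to isolate $\eta'(0)$. First I would write $x(\varepsilon)=\varepsilon w_0+\varepsilon z(\varepsilon)$, so that $x(0)=0$, $x'(0)=w_0$ and, using $z(0)=0$ together with the $C^1$-regularity of $z$ from Theorem~\ref{local}, $x''(0)=2z'(0)\in Z$; then set $\Phi(\varepsilon):=F(\eta(\varepsilon),x(\varepsilon))$, which vanishes identically on $(-\varepsilon_0,\varepsilon_0)$. Since $F(t,0)\equiv 0$, differentiation in $t$ gives $\partial_t F(0,0)=0$ and $\partial^2_{t,t}F(0,0)=0$, while $w_0\in\ker\partial_x F(0,0)$ gives $\partial_x F(0,0)w_0=0$. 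Hence $\Phi'(0)=0$ carries no information, and I would pass to the second derivative.

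Differentiating $\Phi\equiv 0$ twice and evaluating at $\varepsilon=0$, with all derivatives of $F$ taken at $(0,0)$, the terms carrying $\partial_t F(0,0)$ and $\partial^2_{t,t}F(0,0)$ drop out and one is left with
\[
0=2\,\eta'(0)\,\partial^2_{t,x}F(0,0)w_0+\partial^2_{x,x}F(0,0)[w_0,w_0]+\partial_x F(0,0)[x''(0)].
\]
Applying $\psi\in Y^{-1}$ and using that $\mathcal{R}(\partial_x F(0,0))=\{y:\langle\psi,y\rangle=0\}$ contains the last term annihilates it, leaving
\[
2\,\eta'(0)\,\langle\psi,\partial^2_{t,x}F(0,0)w_0\rangle+\langle\psi,\partial^2_{x,x}F(0,0)[w_0,w_0]\rangle=0.
\]
Hypothesis~(4) of Theorem~\ref{local} says $\partial^2_{t,x}F(0,0)w_0\notin\mathcal{R}(\partial_x F(0,0))$, i.e. $\langle\psi,\partial^2_{t,x}F(0,0)w_0\rangle\neq 0$, so this may be solved for $\eta'(0)$, giving the asserted formula (the factor $\|w_0\|$ in the statement encoding a normalization of the kernel vector).

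For the transcriticality assertion, if $\eta'(0)\neq 0$ then $\eta$ is a local $C^1$-diffeomorphism near $0$, so the nontrivial branch $\varepsilon\mapsto(\eta(\varepsilon),x(\varepsilon))$ may be reparametrized over a full neighborhood of $t=0$; as $x(\varepsilon)\neq 0$ for $0<|\varepsilon|<\varepsilon_0$, near the origin $F^{-1}(0)$ is the union of the trivial line $\{(t,0)\}$ and a second curve crossing it transversally at $(0,0)$ --- which is exactly a transcritical bifurcation. I expect the only genuinely delicate point to be the regularity bookkeeping: justifying that $\Phi$ may be differentiated twice and that $\eta''(0)$-type terms really are harmless. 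The clean way around this is to carry out the argument not on $\Phi$ directly but on the scalar Lyapunov--Schmidt reduced equation $g(t,\varepsilon)=0$ --- with $g$ as smooth as $F$ permits, $\eta$ the function furnished by the implicit function theorem applied to $g$, and $g(0,0)=0$ forced by $F(t,0)\equiv 0$ --- after which the computation of $\eta'(0)=-\partial_\varepsilon g(0,0)/\partial_t g(0,0)$ reduces to the chain rule and hypotheses~(3)--(4) of Theorem~\ref{local}.
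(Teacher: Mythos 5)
Your argument is correct and is essentially the proof the paper implicitly relies on: the paper does not prove Theorem \ref{local2} but cites Kielh\"ofer [KH12, I.6], and the standard derivation there is exactly what you do --- substitute the branch $(\eta(\varepsilon),\varepsilon w_0+\varepsilon z(\varepsilon))$ into $F=0$, differentiate twice at $\varepsilon=0$ so that $F(t,0)\equiv 0$ kills the $\partial_t F$ and $\partial^2_{t,t}F$ terms, annihilate $\partial_x F(0,0)[x''(0)]$ by pairing with $\psi$, and use hypothesis (4) to solve for $\eta'(0)$, with the regularity bookkeeping handled (as you note) through the Lyapunov--Schmidt reduced scalar equation. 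Your observation that the factor $\|w_0\|$ in the stated formula reflects parametrizing the branch by the normalized kernel vector is also the right reading of the convention, and the transcriticality conclusion from $\eta'(0)\neq 0$ is argued correctly.
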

\vskip 2mm
\par

Note that critical points of $I_\alpha(u)$,
satisfy
\begin{equation}\label{41}
  (1-x^2) \left[  (1-x^2)^2u'\right]'''+\rho(1-x^2)\bigl(1-\frac43 \frac{ e^{4u}}{\int_{-1}^1(1-x^2)e^{4u}}\bigr)=0, \quad x \in (-1, 1),
\end{equation}
where $\rho=\frac{6}{\alpha}$.

  Let
 \begin{equation*}
   \mathcal{V}=\bigg\{u\in H^4(\mathbb{S}^4):u=u(x),\ \int_{\mathbb{S}^4}udw=0 \bigg\}; \quad
   \mathcal{W}=\bigg\{u\in  L^2(\mathbb{S}^4):u=u(x),\ \int_{\mathbb{S}^4}udw=0\bigg\}.
 \end{equation*}
 To apply Theorem \ref{local}, we define  a nonlinear operator $\T:\R\times  \mathcal{V}\rightarrow  \mathcal{W}$ as
\begin{equation*}
 \T(\rho,u)=P_4 u+\rho \left(1- \frac{e^{4u}}{\int_{\mathbb{S}^4} e^{4u}dw}\right),
\end{equation*}

Obviously, the operator $\mathcal{T}$ is well defined.  After direct computations, one has
\begin{equation*}
    \partial_{u}\mathcal{T}(\rho,0)\phi=P_4\phi-4\rho\phi.
\end{equation*}

Define
\begin{equation*}
  \F(\rho,u)=u+\rho P_4^{-1} \left(1- \frac{e^{4u}}{\int_{\mathbb{S}^4} e^{4u}dw}\right),
\quad \mathcal{G}(u)=P_4^{-1} \left(1- \frac{e^{4u}}{\int_{\mathbb{S}^4} e^{4u}dw}\right).
\end{equation*}

 Let
 $\mathcal{S}$  denote the closure of the set of nontrivial solutions of
 \begin{equation}\label{47}
    \F(\rho,u)=0.
 \end{equation}
 It is clear that \eqref{47} and \eqref{41} are equivalent and a solution of \eqref{47}.

\begin{lemma}\label{local-condition}
 Let $ \rho_k=\frac{(k+3)!}{4(k-1)!}$ for  $k=1,2,3,\dots$, then the kernel of  $\partial_{u}\T(\rho_k,0)\phi=0$  is $1$-dimensional and
  \begin{equation}\label{43}
   \ker(\partial_{u}\T(\rho_k,0))= span\{F_k\}.
 \end{equation}
 Moreover,  the range of the operator $\partial_{u}\T(\rho_{k},0))$ is given  by
 \begin{equation}\label{44}
   \mathcal{R}(\partial_{u}\T(\rho_{k},0))=\left\{\varphi\in L^{2}(-1,1):\int_{-1}^1 (1-x^2)\varphi F_k=0\right\},
 \end{equation}
  and it has co-dimension $1$.  In addition,  we have
  \begin{equation}\label{45}
  \partial^{2}_{\rho,u}\T(\rho_{k},0)F_k\not\in\mathcal{R}(\partial_{u}\T(\rho_{k},0)).
  \end{equation}
  \end{lemma}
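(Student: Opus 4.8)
The plan is to verify the four hypotheses of Theorem~\ref{local} for the operator $\partial_u\T(\rho_k,0)$ directly, using the spectral decomposition of the Paneitz operator in Gegenbauer polynomials. Recall that $\partial_u\T(\rho,0)\phi=P_4\phi-4\rho\phi$, and that $P_4 F_j=\mu_j F_j$ with $\mu_j=\lambda_j(\lambda_j+2)=j(j+3)(j(j+3)+2)$; a short computation shows $\mu_j = (j-1)j(j+3)(j+4)$, so that $\rho_j:=\mu_j/4=\frac{(j+3)!}{4(j-1)!}$. Thus on the Gegenbauer basis $\partial_u\T(\rho_k,0)$ acts diagonally with eigenvalue $\mu_j-4\rho_k=\mu_j-\mu_k$, and since $j\mapsto\mu_j$ is strictly increasing, the only vanishing eigenvalue occurs at $j=k$ (and $j=0$ is excluded because constants are quotiented out in $\mathcal V,\mathcal W$; note also $\mu_0=0$ but $F_0$ is not in the space). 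This immediately gives \eqref{43}, since $\ker(\partial_u\T(\rho_k,0))=\mathrm{span}\{F_k\}$ is one-dimensional.

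Next I would establish \eqref{44}. The operator $\partial_u\T(\rho_k,0)=P_4-4\rho_k$ is self-adjoint on $L^2$ with respect to the weighted inner product $\langle\varphi,\psi\rangle=\int_{-1}^1(1-x^2)\varphi\psi$, so its range is the orthogonal complement of its kernel in that inner product; by \eqref{43} this is precisely $\{\varphi\in L^2(-1,1):\int_{-1}^1(1-x^2)\varphi F_k=0\}$. One should check the range is closed (equivalently, that $0$ is an isolated point of the spectrum of $P_4-4\rho_k$ restricted to the complement of $F_k$), which follows from the spectral gap $\min_{j\neq k}|\mu_j-\mu_k|>0$ together with elliptic regularity for $P_4$ on $\mathbb S^4$; hence $\mathcal R(\partial_u\T(\rho_k,0))$ has codimension one.

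Finally, for \eqref{45} I would compute $\partial^2_{\rho,u}\T(\rho,0)\phi$. Differentiating $\partial_u\T(\rho,u)\phi=P_4\phi-4\rho\frac{e^{4u}\phi}{\int e^{4u}}+\cdots$ in $\rho$ at $(\rho_k,0)$ yields $\partial^2_{\rho,u}\T(\rho_k,0)F_k=-4F_k$ (the other terms vanish at $u=0$ after using $\int_{\mathbb S^4}F_k\,dw=0$). Then
\begin{equation*}
\int_{-1}^1(1-x^2)\bigl(\partial^2_{\rho,u}\T(\rho_k,0)F_k\bigr)F_k=-4\int_{-1}^1(1-x^2)F_k^2\neq 0,
\end{equation*}
so $\partial^2_{\rho,u}\T(\rho_k,0)F_k\notin\mathcal R(\partial_u\T(\rho_k,0))$ by the characterization \eqref{44}. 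Assembling these, all four conditions of Theorem~\ref{local} hold. The main technical point to be careful about is the functional-analytic setting — verifying that $P_4^{-1}$ is well defined and compact on the subspaces $\mathcal V,\mathcal W$ with mean zero, so that $\partial_u\T(\rho_k,0)$ is genuinely Fredholm of index zero and the codimension-one claim is rigorous; everything else is a direct consequence of the explicit Gegenbauer spectral decomposition. The continuity of $\partial_\rho\T$, $\partial_u\T$ and $\partial^2_{\rho,u}\T$ (hypothesis (2) of Theorem~\ref{local}, transported to $\F$) follows from smoothness of $u\mapsto e^{4u}$ as a map between the relevant Sobolev spaces on a compact manifold.
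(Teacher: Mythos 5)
Your proposal is correct and follows essentially the same route as the paper: diagonalize $\partial_u\T(\rho_k,0)=P_4-4\rho_k$ on the Gegenbauer basis to obtain the one-dimensional kernel, use the weighted orthogonality of the $F_j$ to identify the range as the orthogonal complement of $F_k$ (codimension one), and verify transversality from $\partial^2_{\rho,u}\T(\rho_k,0)\phi=-4\phi$ together with $\int_{-1}^1(1-x^2)F_k^2\neq 0$. One typo to fix: $\mu_j=\lambda_j(\lambda_j+2)=j(j+1)(j+2)(j+3)=\frac{(j+3)!}{(j-1)!}$, not $(j-1)j(j+3)(j+4)$; your subsequent identity $\rho_j=\mu_j/4=\frac{(j+3)!}{4(j-1)!}$ is the correct one and the rest of the argument is unaffected.
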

  \vskip 2mm
\par
  \begin{proof} We can choose
  \begin{equation*}
    X= \mathcal{V} \mbox{  and } Y=  \mathcal{W}.
  \end{equation*}
It is easy to compute  that
$$\partial_{u}\T(\rho_{k},0)\phi=P_4\phi-4\rho_k\phi, \quad
\partial^2_{uu}\T(\rho_{k},0)(\phi, \phi)= -16 \rho_k  \phi^2+16 \rho_k\int_{-1}^1 (1-x^2) \phi^2.
$$
Then   \eqref{43} follows from  \eqref{22}.  From the orthogonal property \eqref{ortho},  we deduce that
\begin{equation*}
\mathcal{R}(\partial_{u}\T(\rho_{k},0)) \mbox{ coincides with the orthogonal of } \ker(\partial_{u}\T(\rho_k,0)).
\end{equation*}
Note $ker(\partial_u \F) = ker(\partial_u \T )$.
Differentiating  $\partial_{u}\T$ with respect to $\rho$ at the point $(\rho_{k},0)$, we get
$$\partial^{2}_{\rho,u}\T(\rho_{k},0)\phi=-4\phi,$$
which,   combined with    the relation $\int_{-1}^1(1-x^2)F_k^2\neq 0$,  suggests  \eqref{45}.  The lemma is proven.
\end{proof}

For $k\in \N^+$, the following local  bifurcation result  is an immediate consequence of Theorem \ref{local} and Lemma \ref{local-condition}.
\begin{theorem}\label{bifur1}
  Let $ \rho_k=\frac{(k+3)!}{4 \cdot (k-1)!}$ for  $k=1,2,3,\dots$, the points $(\rho_{k},0)$ are  bifurcation points
 for the curve of solutions $(\rho,0)$. In particular,   there exists $\varepsilon_{0}>0$ and   continuously differentiable functions  $\rho_k:(-\varepsilon_{0},\varepsilon_{0})\rightarrow \R$ and $\psi_k:(-\varepsilon_{0},\varepsilon_{0})\rightarrow \{F_k\}^\bot$ such that $\rho_k(0)=\rho_k$, $\psi_k(0)=0$ and every
nontrivial solution of  \eqref{41} in a small neighborhood of $(\rho_{k},0)$ is of the form
\begin{equation*}
  (\rho_k(\varepsilon),\varepsilon F_k+\varepsilon \psi_k(\varepsilon)).
\end{equation*}
In particular, when $k=2$,  the bifurcation point $(\rho_2, 0)=(30, 0) $ is a transcritical bifurcation point.  Indeed, we have
\begin{equation*}\label{trans}
\rho_2'(0)= -60  {\frac{ \int_{-1}^{1} (1-x^2)F_2^3}{ \int_{-1}^{1} (1-x^2)F_2^2}}= -20.
\end{equation*}
\end{theorem}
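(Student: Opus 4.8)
The plan is to read off the first (local) assertion directly from the Crandall--Rabinowitz theorem (Theorem~\ref{local}) using Lemma~\ref{local-condition}, and then to compute the slope $\rho_2'(0)$ at $k=2$ from the formula in Theorem~\ref{local2}; transcriticality is immediate once $\rho_2'(0)\neq0$.

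\textbf{Local branches.} Fix $k$, take $X=\mathcal{V}$, $Y=\mathcal{W}$, and apply Theorem~\ref{local} to $F(t,u):=\T(\rho_k+t,u)$, so that the abstract parameter is $t=\rho-\rho_k$ and $(0,0)$ corresponds to $(\rho_k,0)$. Hypothesis (1) holds because $e^{4\cdot0}\big/\!\int_{\mathbb{S}^4}e^{4\cdot0}\,dw\equiv1$, whence $\T(\rho,0)\equiv0$. Hypothesis (2) follows from analyticity of the Nemytskii map $u\mapsto e^{4u}$ on $\mathcal{V}\hookrightarrow C^0(\mathbb{S}^4)$, the positivity of $\int_{\mathbb{S}^4}e^{4u}\,dw$, and boundedness of $P_4:\mathcal{V}\to\mathcal{W}$. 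Hypotheses (3) and (4) are precisely \eqref{43}--\eqref{45} of Lemma~\ref{local-condition}, with $w_0=F_k$ and complement $Z=\{F_k\}^{\bot}$. Theorem~\ref{local} then produces the asserted $C^1$ curve $\varepsilon\mapsto(\rho_k(\varepsilon),\varepsilon F_k+\varepsilon\psi_k(\varepsilon))$ with $\rho_k(0)=\rho_k$, $\psi_k(0)=0$, $\psi_k(\varepsilon)\in\{F_k\}^{\bot}$, exhausting the nontrivial solutions of \eqref{41} near $(\rho_k,0)$ (recall that \eqref{47} and \eqref{41} are equivalent).

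\textbf{The slope at $k=2$.} Apply Theorem~\ref{local2} with $w_0=F_2$ and with $\psi$ the nonzero continuous functional $\varphi\mapsto\int_{-1}^1(1-x^2)\varphi F_2$, whose kernel is $\mathcal{R}(\partial_u\T(\rho_2,0))$ by \eqref{44}. From the identities $\partial^2_{\rho,u}\T(\rho_2,0)\phi=-4\phi$ and $\partial^2_{u,u}\T(\rho_2,0)(\phi,\phi)=-16\rho_2\phi^2+16\rho_2\int_{-1}^1(1-x^2)\phi^2$ obtained in the proof of Lemma~\ref{local-condition}, one gets
\begin{equation*}
\langle\partial^2_{\rho,u}\T(\rho_2,0)F_2,\psi\rangle=-4\int_{-1}^1(1-x^2)F_2^2,\qquad
\langle\partial^2_{u,u}\T(\rho_2,0)[F_2,F_2],\psi\rangle=-16\rho_2\int_{-1}^1(1-x^2)F_2^3,
\end{equation*}
the $x$-independent term dropping out because $\int_{-1}^1(1-x^2)F_2=0$ (orthogonality \eqref{ortho} of $F_2$ to $F_0$). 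Feeding these into the formula of Theorem~\ref{local2} — with $w_0$ in the normalization used there — the numerical constant and $\|w_0\|$ collapse, leaving $\rho_2'(0)=-2\rho_2\,\big[\int_{-1}^1(1-x^2)F_2^3\big]\big/\big[\int_{-1}^1(1-x^2)F_2^2\big]=-60\,\big[\int_{-1}^1(1-x^2)F_2^3\big]\big/\big[\int_{-1}^1(1-x^2)F_2^2\big]$, using $\rho_2=\frac{5!}{4\cdot1!}=30$.

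\textbf{Evaluation and conclusion.} By \eqref{24}, $\int_{-1}^1(1-x^2)F_2^2=\frac{8}{7\cdot3\cdot4}=\frac{2}{21}$; and with $F_2=\frac14(5x^2-1)$, expanding $(1-x^2)(5x^2-1)^3=-125x^8+200x^6-90x^4+16x^2-1$ and integrating termwise over $[-1,1]$ gives $\int_{-1}^1(1-x^2)F_2^3=\frac{1}{64}\cdot\frac{128}{63}=\frac{2}{63}$. Hence the ratio is $\frac13$ and $\rho_2'(0)=-20$; since this is nonzero, the last clause of Theorem~\ref{local2} yields that $(\rho_2,0)=(30,0)$ is a transcritical bifurcation point. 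The argument uses no idea beyond the cited abstract theorems; the step most prone to error — though not a genuine obstacle — is Step~2: matching the abstract data $(X,Y,\psi,\|w_0\|)$ to the concrete spaces and keeping the $\tfrac34$ factor between $dw$ and $(1-x^2)\,dx$ (and the mean-zero projection onto $\mathcal{W}$) consistent so that the slope formula really does reduce to $-20$, together with the single monomial integration for $\int_{-1}^1(1-x^2)F_2^3$.
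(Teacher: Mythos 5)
Your proposal is correct and takes essentially the same route as the paper: apply Crandall--Rabinowitz (Theorem \ref{local}) with the hypotheses supplied by Lemma \ref{local-condition} to get the local branches, then use the slope formula of Theorem \ref{local2} with $\psi(\varphi)=\int_{-1}^1(1-x^2)\varphi F_2$ and the second derivatives computed in that lemma's proof. The paper states the result as an immediate consequence without writing out the computation; your evaluation $\int_{-1}^1(1-x^2)F_2^2=\tfrac{2}{21}$, $\int_{-1}^1(1-x^2)F_2^3=\tfrac{2}{63}$, hence $\rho_2'(0)=-20$, correctly fills in exactly those details.
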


\begin{corollary}
  Let $ \alpha_k=\frac{24\cdot (k-1)!}{(k+3)!}$ for  $k=1, 2,3,\dots$, the points $(\alpha_{k},0)$ are  bifurcation points for  the curve of solutions $(\alpha,0)$  of \eqref{15}.  Moreover, when $k=2$,  the bifurcation point $(\frac15, 0) $  is a  transcritical bifurcation point.
 \end{corollary}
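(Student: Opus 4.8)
The plan is to deduce the corollary from Theorem~\ref{bifur1} via the reparametrization $\rho=\frac{6}{\alpha}$. The first step is to observe that \eqref{15} and \eqref{41} are literally the same family of equations under this substitution: dividing \eqref{15} by $\alpha$ turns the coefficient $\tfrac{8}{\alpha}$ into $\tfrac43\rho$ and the constant $6$ into $\rho$, and after multiplying by $(1-x^2)$ one recovers exactly \eqref{41} with $\rho=\frac{6}{\alpha}$. Hence $(\alpha,u)$ solves \eqref{15} if and only if $\bigl(\tfrac{6}{\alpha},u\bigr)$ solves \eqref{41}, and the trivial branch $\{(\alpha,0):\alpha>0\}$ of \eqref{15} corresponds to the trivial branch $\{(\rho,0):\rho>0\}$ of \eqref{41}.

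Next, since $\Phi(\alpha)=\tfrac{6}{\alpha}$ is a $C^\infty$ involution of $(0,\infty)$, the map $(\alpha,u)\mapsto(\Phi(\alpha),u)$ is a homeomorphism of $(0,\infty)\times\mathcal V$ carrying the (closure of the) set of nontrivial solutions of \eqref{15} bijectively onto that of \eqref{41} and fixing the trivial branch setwise. Therefore $(\alpha_\ast,0)$ is a bifurcation point for \eqref{15} precisely when $(\Phi(\alpha_\ast),0)$ is one for \eqref{41}. By Theorem~\ref{bifur1}, the bifurcation points of \eqref{41} are exactly the $(\rho_k,0)$ with $\rho_k=\frac{(k+3)!}{4(k-1)!}$; hence those of \eqref{15} are the $(\alpha_k,0)$ with $\alpha_k=\Phi^{-1}(\rho_k)=\frac{6}{\rho_k}=\frac{24(k-1)!}{(k+3)!}$, which is the claimed formula. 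Moreover the local branch $\bigl(\rho_k(\varepsilon),\,\varepsilon F_k+\varepsilon\psi_k(\varepsilon)\bigr)$ supplied by Theorem~\ref{bifur1} is transported to a local branch near $(\alpha_k,0)$ of the form $\bigl(\tfrac{6}{\rho_k(\varepsilon)},\,\varepsilon F_k+\varepsilon\psi_k(\varepsilon)\bigr)$.

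Finally, for the transcritical assertion at $k=2$, set $\alpha_2(\varepsilon)=\tfrac{6}{\rho_2(\varepsilon)}$; using $\rho_2(0)=30$ and $\rho_2'(0)=-20$ from Theorem~\ref{bifur1}, the chain rule gives $\alpha_2'(0)=-\tfrac{6\,\rho_2'(0)}{\rho_2(0)^2}=\tfrac{120}{900}=\tfrac{2}{15}\neq0$, so by Theorem~\ref{local2} the bifurcation at $\bigl(\tfrac15,0\bigr)$ is transcritical. I do not expect any real obstacle here; the only point worth a line of care is that rescaling the scalar parameter does not disturb the Hilbert-space framework of Theorems~\ref{local}--\ref{local2} and Lemma~\ref{local-condition}. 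This can be checked directly: the operator $\widetilde{\T}(\alpha,u):=\T(\tfrac6\alpha,u)$ has, at $(\alpha_k,0)$, the same kernel $\mathrm{span}\{F_k\}$ and the same range as $\T$ at $(\rho_k,0)$, while by the chain rule its mixed second derivative $\partial^2_{\alpha,u}\widetilde{\T}(\alpha_k,0)$ equals $-\tfrac{\rho_k^2}{6}\,\partial^2_{\rho,u}\T(\rho_k,0)$, a nonzero multiple, so hypotheses (3)--(4) of Theorem~\ref{local} and the transversality formula of Theorem~\ref{local2} remain in force.
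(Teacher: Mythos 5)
Your proposal is correct and follows essentially the same route the paper intends: the corollary is just Theorem~\ref{bifur1} transported through the smooth reparametrization $\rho=\frac{6}{\alpha}$, giving $\alpha_k=\frac{6}{\rho_k}=\frac{24\,(k-1)!}{(k+3)!}$ and, by the chain rule, $\alpha_2'(0)=\frac{2}{15}\neq0$ so the bifurcation at $(\tfrac15,0)$ is transcritical. The only slight imprecision is your phrase that the bifurcation points of \eqref{41} are ``exactly'' the $(\rho_k,0)$ — Theorem~\ref{bifur1} only asserts that these are bifurcation points — but since the corollary claims nothing more, this does not affect the argument.
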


\begin{remark}
When $k=1$,  the bifurcation leads to the  family of solutions $u=-\ln(1-ax),  a \in (-1, 1)$ and $\rho=6$.
 It is clear that $(\rho_k, 0)$ is not a transcritical bifurcation point for $k $ odd since $F_k$ is an odd function and  $\rho'(0)=0$ in this case.  It should be true that $(\rho_k, 0)$ is a transcritical bifurcation point for $k $ even,  we only need to check if  {$ \int_{-1}^{1} (1-x^2)F_k^3 \not =0$} in this case, which can be confirmed for small $k$ numerically.   However,  in this paper we only need to use the transcriticality of $(\rho_2, 0)$.
 \end{remark}

In order to analyze  the global  bifurcation diagram,  we employ  a  global bifurcation theorem  via degree arguments (see \cite{KH12,R}) and also exploit  special properties of solutions to \eqref{41}.

First, we recall   a global  bifurcation result (see \cite[Theorem II.5.8]{KH12}).

 \begin{proposition}\label{t47}
  In Theorem \ref{bifur1},   the bifurcation at $(\rho_{k},0)$ is global
and  satisfies the Rabinowitz alternative, i.e., a global continuum of solutions to \eqref{41} either goes
to infinity  in $R \times \mathcal{W}$ or meets the trivial solution curve  at $(\rho_m, 0)$  for some $m \ge 1$ and $m\neq k$.
 \end{proposition}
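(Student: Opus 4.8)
The plan is to recast \eqref{41} as a compact perturbation of the identity and invoke the global bifurcation theorem of Rabinowitz in the form of \cite[Theorem II.5.8]{KH12} (cf. \cite{R}). Recall that \eqref{41} is equivalent to the fixed point equation $\F(\rho,u)=u+\rho\,\mathcal{G}(u)=0$ on $\mathcal{W}$, where $\mathcal{G}(u)=P_4^{-1}\bigl(1-e^{4u}/\int_{\mathbb{S}^4}e^{4u}dw\bigr)$ and $P_4^{-1}$ is the inverse of $P_4$ on the space of axially symmetric $L^2$ functions with zero mean, which is well defined since $\ker P_4$ consists of constants. The closure of the set of nontrivial solutions of this equation is the set $\mathcal{S}$ introduced in Section 5.

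First I would check the abstract hypotheses of the theorem. Using $\int_{\mathbb{S}^4}\phi\,dw=0$ for $\phi\in\mathcal{W}$ one computes $\mathcal{G}'(0)=-4P_4^{-1}$, so $\F(\rho,u)=u-4\rho P_4^{-1}u-H(\rho,u)$ with $H(\rho,u):=\rho\bigl(\mathcal{G}(u)-\mathcal{G}(0)-\mathcal{G}'(0)u\bigr)$. The linear operator $P_4^{-1}$ is compact on $\mathcal{W}$: it sends zero-mean $L^2$ functions into zero-mean $H^4$ functions, and $H^4(\mathbb{S}^4)\hookrightarrow L^2(\mathbb{S}^4)$ is compact by Rellich's theorem. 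The Beckner/Moser--Trudinger embedding shows $u\mapsto e^{4u}$ is a smooth map from $H^4(\mathbb{S}^4)$ into every $L^p$, $p<\infty$; hence $\mathcal{G}$ is compact and $C^1$, and $H(\rho,u)=o(\|u\|)$ as $u\to 0$ uniformly for $\rho$ in compact sets. Linearizing at $u=0$ gives $P_4\phi=4\rho\phi$, so the characteristic values are exactly $\rho_m=\lambda_m(\lambda_m+2)/4=(m+3)!/(4(m-1)!)$, $m\ge 1$, and by Lemma \ref{local-condition} each is simple with one-dimensional kernel $\mathrm{span}\{F_m\}$ and the transversality \eqref{45}. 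In particular $\rho_k$ is a characteristic value of the compact operator $4P_4^{-1}$ of odd (indeed unit) algebraic multiplicity, which is precisely what makes the Leray--Schauder index of the trivial solution change across $\rho_k$.

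With this in place the cited theorem yields a connected component $\mathcal{C}_k\subset\mathcal{S}\cup\{(\rho_k,0)\}$ of nontrivial solutions emanating from $(\rho_k,0)$ that is either unbounded in $\R\times\mathcal{W}$ or contains a second bifurcation point. To pin down the second alternative I would observe that $\partial_u\F(\rho,0)=I-4\rho P_4^{-1}$ is invertible whenever $\rho\notin\{\rho_m:m\ge 1\}$, so by the implicit function theorem no bifurcation from the trivial branch occurs at such a $\rho$; hence if $\mathcal{C}_k$ returns to the trivial curve it must do so at some $(\rho_m,0)$, and $m\neq k$ because near $(\rho_k,0)$ the nontrivial solution set is the single local branch described in Theorem \ref{bifur1}. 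This is the Rabinowitz alternative as stated.

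The main obstacle is the careful verification that the Nemytskii-type nonlinearity $u\mapsto e^{4u}/\int_{\mathbb{S}^4}e^{4u}dw$ defines a compact, continuously Fr\'echet differentiable map from $\mathcal{V}$ into $\mathcal{W}$ with the $o(\|u\|)$ control required to place \eqref{41} exactly inside the framework of \cite[Theorem II.5.8]{KH12}; once that is done the conclusion is a direct application of the cited theorem together with Lemma \ref{local-condition}. It is worth emphasizing that the restriction to axially symmetric, zero-mean functions is essential here, since it is precisely this restriction that makes every characteristic value simple (on the full sphere $\lambda_m(\lambda_m+2)$ has large multiplicity, and the multiplicity-parity hypothesis of the global theorem would need separate discussion).
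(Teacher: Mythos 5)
Your argument is correct and follows essentially the same route as the paper, which simply invokes \cite[Theorem II.5.8]{KH12} for bifurcation from a simple characteristic value, with the hypotheses (one-dimensional kernel $\mathrm{span}\{F_k\}$, codimension-one range, transversality) already supplied by Lemma \ref{local-condition}; your verification of the compact-perturbation structure $\F(\rho,u)=u+\rho\,\mathcal{G}(u)$ is exactly the intended setting. The only small inaccuracy is the claim that the fixed-point equation lives on $\mathcal{W}$: since $e^{4u}$ is not defined for general $u\in L^2$, the compact map $\mathcal{G}$ should be taken as a self-map of $\mathcal{V}$ (where $H^4(\mathbb{S}^4)\hookrightarrow C^0$ makes the exponential harmless), as you in fact acknowledge at the end.
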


 Next we state and prove  the following  more specific  global bifurcation result regarding \eqref{41}.

 \begin{theorem}\label{main-bifur}
 1)   For $k\ge 2$,  there exists a global continuum of solutions
 $\mathcal{B}^+_k  \subset \mathcal{S} \setminus \{ (\rho, 0), \rho \in \mathbb{R}\}$
  of \eqref{41}  which coincides  in a small neighborhood of $(\rho_k, 0)$ with
  $$\{ (\rho_k(\varepsilon),\varepsilon F_k+\varepsilon \psi_k(\varepsilon)),   \varepsilon<0\}.$$
    $\mathcal{B}^+_k $ is contained in
  $\mathcal{N}_2:= \{ (\rho, u):  \rho >  30, \,  u \in L^2(-1, 1) \}$ and  is uniformly bounded in $L^2(-1, 1)$  for $\rho$ in
  any fixed  finite interval $[\rho_m, \rho_M] \subset (30, \infty)$.  Furthermore, $ \mathcal{B}^+_k $  satisfies  the  improved Rabinowitz alternative, i.e.,  either $\mathcal{B}^+_k$ extends in $\rho$ to infinity  or meets the trivial solution curve at $(\rho_m, 0)$  for some $m \geq 2$.

 2)  Similarly, for $k\ge 2$,  there exists   a global continuum of solutions  $\mathcal{B}^{-}_k $  which coincides  in a small neighborhood of $(\rho_k, 0)$ with  $\{ (\rho_k(\varepsilon),\varepsilon F_k+\varepsilon \psi_k(\varepsilon)),   \varepsilon>0\} $.  When $k\ge 3$,  $\mathcal{B}^{-}_k $ is contained in
  $\mathcal{N}_2$  and satisfies  the  boundedness for  $\rho$ in any fixed  finite interval  $[\rho_m, \rho_M] \subset (12, \infty)$.  Furthermore,  the improved Rabinowitz alternative holds.

 3)  Moreover,    $\mathcal{B}^+_k=\{ u:  u(x)=v(-x),  v \in \mathcal{B}^{-}_k\} $  when $k$ is odd.

 4)  The global continuum of solutions $\mathcal{B}^{-}_2$ of \eqref{41} must be contained in the set
 $$\mathcal{N}_1:= \left\{ (\rho, u):  \rho \in \left( \frac{6\times 1800}{ 473 + \sqrt{209329}},  30\right) \supset (12, 30), \,  u \in L^2(-1, 1)\right \}.$$
    Furthermore,   $\mathcal{B}^{-}_2$ is unbounded in  $L^\infty ([-1, 1] )$,  and  there exists a sequence of $ (\rho^{(k)}, u^{(k)}) \in \mathcal{B}^{-}_2,  k =1, 2, \cdots $ such that $ \rho^{(k)} \to \frac12$ and
 $\|u^{(k)}\|_{L^\infty([-1,1])} \to \infty$.  As an immediate consequence, there is a nontrivial solution to \eqref{41} for any $\rho \in (12, 30)$.
  \end{theorem}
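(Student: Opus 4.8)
The plan is to assemble the four parts from the abstract machinery (Proposition \ref{t47}) together with the ODE-specific a priori bounds established in Sections 2 and 3. First I would set up the basic local picture: by Theorem \ref{bifur1} each bifurcation point $(\rho_k,0)$ with $k\ge 2$ emits two branches, corresponding to $\varepsilon<0$ and $\varepsilon>0$, which near $(\rho_k,0)$ have the form $(\rho_k(\varepsilon),\varepsilon F_k+\varepsilon\psi_k(\varepsilon))$. For $k=2$ the transcriticality $\rho_2'(0)=-20<0$ tells us that the $\varepsilon<0$ branch starts with $\rho>30$ while the $\varepsilon>0$ branch starts with $\rho<30$; this is what forces $\mathcal{B}^+_2\subset\mathcal{N}_2$ and $\mathcal{B}^-_2\subset\mathcal{N}_1$ locally. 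To globalize, one combines Proposition \ref{t47} with a connectedness/separation argument showing that $\mathcal{N}_2$ (respectively $\mathcal{N}_1$) is a trapping region: a continuum leaving $\mathcal{N}_2$ would have to cross $\rho=30$, but at $\rho=30$ the only solution on the branch would be a trivial one, which it is not by construction. For the containment of $\mathcal{B}^+_k$ and $\mathcal{B}^-_k$ ($k\ge 3$) in $\mathcal{N}_2=\{\rho>30\}=\{\alpha<\tfrac15\}$ one invokes the classification results: Theorem \ref{main1} rules out nonconstant solutions for $\alpha\ge\tfrac{473+\sqrt{209329}}{1800}$ and Theorem \ref{main2} rules them out at $\alpha=\tfrac15$; hence any nonconstant solution has $\alpha<\tfrac15$, i.e. $\rho>30$, except possibly in the window $\alpha\in(\tfrac15,\tfrac{473+\sqrt{209329}}{1800})$, which is exactly the window $\rho\in(\tfrac{6\times1800}{473+\sqrt{209329}},30)$ that defines $\mathcal{N}_1$. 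Part 3) follows from the reflection symmetry $x\mapsto-x$ of equation \eqref{41}: since $F_k(-x)=(-1)^kF_k(x)$, for $k$ odd the map $u(x)\mapsto u(-x)$ interchanges the $\varepsilon>0$ and $\varepsilon<0$ branches, and being a homeomorphism of the solution set it carries $\mathcal{B}^-_k$ onto $\mathcal{B}^+_k$.

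The uniform $L^2$-boundedness on compact $\rho$-intervals inside $(30,\infty)$ (part 1) and inside $(12,\infty)$ for $\mathcal{B}^-_k$, $k\ge 3$ (part 2) is where the ODE identities do the real work. Here I would use the key estimate $G'(x)\le 1/\alpha$ from Lemma \ref{132} together with the semi-norm inequality \eqref{keyestimate} in Lemma \ref{l33} and the identities \eqref{220}--\eqref{223} of Lemma \ref{equality} to bound $\int_{-1}^1(1-x^2)G^2$, hence $\int_{-1}^1(1-x^2)|\nabla u|^2$, and then $\|u\|_{L^2}$ via the normalization $\int_{\mathbb{S}^4}u\,dw=0$ (Poincaré on $\mathbb{S}^4$) — all in terms of $\alpha$ alone, uniformly for $\alpha$ in a compact subset of $(0,\tfrac12)$. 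The point is that $\beta$ is pinned by $0<\beta<1/\alpha$ from \eqref{228}, so \eqref{223} gives an $\alpha$-uniform bound on $\int_{-1}^1|[(1-x^2)G]'|^2$, and \eqref{219} converts this into a bound on $\int(1-x^2)G^2$; elliptic regularity for $P_4$ then upgrades this to the $L^2$ (indeed $H^4$) bound needed for the Rabinowitz alternative to apply in $L^2$.

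For part 4), the strategy is: since $\mathcal{B}^-_2$ bifurcates at $(\rho_2,0)=(30,0)$ toward $\rho<30$ and is trapped in $\mathcal{N}_1$ (it cannot cross $\rho=30$ except at trivial solutions, and by Theorem \ref{main1} it cannot reach $\rho\le\tfrac{6\times1800}{473+\sqrt{209329}}$, equivalently $\alpha\ge\alpha_5$, since there are no nonconstant solutions there), the improved Rabinowitz alternative leaves only two options — the branch returns to another bifurcation point $(\rho_m,0)$ with $m\ge 2$, or it is unbounded. The first option is excluded because all other even bifurcation values $\rho_m$, $m\ge 4$, lie outside $\bar{\mathcal{N}}_1$ (they exceed $30$), and odd $m$ would force crossing a region with no odd-symmetric solutions of the right type; hence $\mathcal{B}^-_2$ is unbounded. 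Since the $L^2$-bound on compact $\rho$-subintervals of $(12,30)$ (from the previous paragraph) precludes blow-up while $\rho$ stays in a compact subinterval, unboundedness must manifest as $\rho^{(k)}\to\partial(12,30)$; the endpoint $\rho=30$ is excluded (it would return the branch to $(\rho_2,0)$), so $\rho^{(k)}\to 12$, i.e. $\alpha\to\tfrac12$, with $\|u^{(k)}\|_{L^\infty}\to\infty$ (blow-up in $L^\infty$ but not $L^2$ near $\alpha=\tfrac12$, consistent with a concentrating bubble). Connectedness of $\mathcal{B}^-_2$ in the $\rho$-coordinate then yields a nontrivial solution for every $\rho\in(12,30)$. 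The main obstacle I anticipate is making the trapping arguments fully rigorous: one must show the global continuum cannot escape $\mathcal{N}_1$ through the boundary hyperplane $\rho=30$ other than at the bifurcation point, which requires ruling out secondary bifurcations or folds that would allow the branch to touch $\rho=30$ at a nonzero solution — this is handled by observing that at $\rho=30$ ($\alpha=\tfrac15$) Theorem \ref{main2} forces any solution to be constant, hence trivial, so the branch genuinely lives in the open region, and a standard degree-continuity argument then confines it.
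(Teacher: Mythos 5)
Your skeleton — local bifurcation from Theorem \ref{bifur1}, transcriticality at $(\rho_2,0)$, the global alternative of Proposition \ref{t47}, trapping via Theorems \ref{main1}--\ref{main2} (no nonconstant solutions at $\rho=30$ or for $\rho\le \frac{6\times 1800}{473+\sqrt{209329}}$), and the reflection $x\mapsto -x$ for part 3) — is the same as the paper's. The genuine gap is that the paper's proof of the boundedness statements in 1)--2) and, above all, of part 4) rests on a compactness ingredient you never invoke: Malchiodi's compactness theorem for fourth-order $Q$-curvature equations, by which solutions of \eqref{41} can blow up in $L^\infty$ only at $\rho=6k$, $k$ being the number of blow-up points; since an axially symmetric solution can blow up only at the two poles, $k\le 2$, so blow-up can occur only at $\rho=6$ or $\rho=12$. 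This is what yields uniform bounds on compact $\rho$-intervals of $(30,\infty)$ (resp.\ $(12,\infty)$), and in 4) it is the only reason the unbounded continuum $\mathcal{B}^-_2$, whose $\rho$-projection is trapped in a bounded interval, must blow up in $L^\infty$ along a sequence with $\rho^{(k)}\to 12$, i.e.\ $\alpha\to\frac12$.

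Your substitutes do not close this. The assertion that ``the $L^2$-bound on compact $\rho$-subintervals precludes blow-up while $\rho$ stays in a compact subinterval'' is false, and you contradict it yourself a line later: a concentrating solution blows up in $L^\infty$ while its $L^2$ and $H^1$ norms stay bounded, and indeed your identity-based bounds from \eqref{221}, \eqref{223}, \eqref{219} are uniform over all $\alpha\in(\frac15,\frac12)$, so they can neither rule out $L^\infty$ blow-up at an interior value of $\rho$ nor force $\rho^{(k)}\to\partial(12,30)$; likewise the exclusion of the endpoint $\rho=30$ (``it would return the branch to $(\rho_2,0)$'') is unjustified, since a non-compact sequence with $\rho\to 30$ produces no limiting solution at $\rho=30$ to which Theorem \ref{main2} could be applied. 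Without the quantization of blow-up levels you cannot conclude $\rho^{(k)}\to 12$. Two further points: the claim that elliptic regularity upgrades the $L^2$ bound to a uniform $H^4$ bound is wrong — it requires control of $e^{4u}$, which is exactly what fails at blow-up (and the unboundedness in the Rabinowitz alternative lives precisely in such stronger norms, via $L^\infty$ blow-up, not in $L^2$); and, more minor, $\int_{-1}^1(1-x^2)G^2=\int_{-1}^1(1-x^2)^3(u')^2$ does not by itself control $\int_{\mathbb{S}^4}|\nabla u|^2\,dw=\frac34\int_{-1}^1G^2$ — you also need $\int_{-1}^1(1-x^2)^2(G')^2$ (available from \eqref{219}) plus a Hardy-type argument using $G(\pm1)=0$. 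With that repair your identity route is a legitimate, more self-contained way to get the $L^2$ bounds claimed in 1)--2), but part 4) genuinely requires the compactness/quantization theorem (or an equivalent blow-up analysis) that your proposal omits.
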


 \begin{proof}
  To prove 1) and 2),  we only need to first apply the general global bifurcation theory  and then  use Theorem \ref{main2} to  show
  $\mathcal{B}^+_k $ and $\mathcal{B}^{-}_k $  are contained in $\mathcal{N}_2:= \{ (\rho, u):  \rho >  30, \,  u \in L^2(-1, 1) \}$.

  A  general compactness result (\cite[Theorem 1.1]{M06})   says that the solutions to \eqref{41}   can only blow up in $L^\infty([-1, 1])$ at  $\rho=6k$ for an  positive integer $k$  when \eqref{41} is considered  as an  fourth order  Q-curvature type equation on $\mathbb{S}^4$,  and $k$ is the number of blowup points. (See also  \cite[Theorem 4.3]{Hang}    from  a  view point of constrained inequalities.)  Since an axially symmetric
  solution can blow up at most two points  at a finite parameter $\rho$,   we must  have $k=1, 2$.   Therefore, this leads to the boundedness of
  $\mathcal{B}^+_k ,  k\ge 2 $  and $\mathcal{B}^{-}_k,  k\ge 3$ for  $\rho$ in  any fixed  finite interval $[\rho_m, \rho_M] \subset (30, \infty)$.

 To prove 3),  we note that $u(x)=v(-x)$ is a solution to \eqref{41} if so is $v(x)$,  and $u(x) $ is not an even function for $u \in \{ u:  u(x)=v(-x),  v \in \mathcal{B}^{-}_k\} $  near the bifurcation point $\{(\rho_k, 0)\}$.   Therefore, by the local bifurcation result Theorem \ref{bifur1},  we know  $ \{ u:  u(x)=v(-x),  v \in \mathcal{B}^{-}_k\} $  is different  from  $ \mathcal{B}^{-}_k  $  and hence  coincides with $\mathcal{B}^+_k$  near $\{(\rho_k, 0)\}$.  Then  3) follows immediately.

 To prove 4),  we  first use the transcriticality  \eqref{trans} to get  $\mathcal{B}^{-}_2 \cap \mathcal{N} _1 \not = \emptyset$.
 By Theorem \ref{main1}-\ref{main2},  we conclude $\mathcal{B}^{-}_2 \subset \mathcal{N}_1$.   Since there is no other bifurcation points
for $\rho$   between $ \frac{6\times 1800}{ 473 + \sqrt{209329}} >6$ and $30$ ,  and $\rho=12$ is the only blowup point,   we conclude that
$ \mathcal{B}^{-}_2 $ must go to infinity in $\mathcal{W}$ and in $L^\infty([-1,1])$ at $\rho=12$.

 This completes the proof.
 \end{proof}

\begin{remark}
The above theorem implies that $ \mathcal{B}^{-}_2 $  does not coincide with other bifurcation branches.
It would be interesting to see whether the solution branches bifurcating from different points $(\rho_k, 0)$ coincide with each other or not, i.e., whether  $\mathcal{B}^+_k =  \mathcal{B}^+_m $  or  $\mathcal{B}^+_k = \mathcal{B}^{-}_m $ in general for any $m \not = k$, or particularly  for $m\equiv k+1$ $(mod \ 2)$.   Also  it is not clear  whether $\mathcal{B}^+_k =  \mathcal{B}^{-}_k $ for some $k \ge 3$.
\end{remark}

{\bf Proof of Theorem \ref{main3}}:  Theorem \ref{main3} follows immediately from Theorem \ref{main-bifur}.  This leads to the existence of a nontrivial solution to \eqref{15} for $\alpha \in (\frac15, \frac12)$.  \qed

\vskip4mm
{\section{Discussion}}
 \setcounter{equation}{0}
\par
In this Section, we shall discuss some ideas to close the gap $\alpha\in(\frac12, \frac{473 + \sqrt{209329}}{1800})$.

Note that Gui and Wei \cite{Gui-Wei} used an induction method to show
\begin{equation*}
 \frac{1}{\alpha}-\beta\leq \frac{4}{\lambda_n} \mbox{ for all } n\geq 4
\end{equation*}
with the sequence $\lambda_n \to \infty.$
So it follows $\frac{1}{\alpha}-\beta\rightarrow0$ as $n\rightarrow\infty$, which leads to a contradiction. Following  the arguments in  \cite{Gui-Wei}, we    divide \eqref{242} by $\lambda_{n+1}$ to get
  \begin{equation}\label{245}
\begin{aligned}
 &\quad \frac{5\beta}{3\lambda_{n+1}}\left[-\frac{8}{\alpha^2}+ (15\lambda_{n+1}+136)\frac{1}{\alpha}-180-30\lambda_{n+1}\right]\\
&\geq\left(\frac{1}{\alpha}-\beta\right)\left[5\beta\left(3-\frac{12}{\lambda_{n+1}}+\frac{32}{5\alpha\lambda_{n+1}} \right)-\frac{3\bar c_n}{\lambda_{n+1}}\left(\frac{1}{\alpha}-\beta\right)\right].
\end{aligned}
\end{equation}
 A direct calculation shows that
\begin{equation}\label{246}
  \mbox{LHS of } \eqref{245}  \leq \frac{100\beta}{\lambda_{n+1}},
\end{equation}
which is   the basic ingredient  for the induction procedure in \cite{Gui-Wei}.  Next, the major task is to find an  appropriate $d$ so that
\begin{equation*}
 \frac{1}{\alpha}-\beta\leq \frac{d}{\lambda_n} \mbox{ for all } n\geq n_0.
\end{equation*}
However, we do not know what  the initial value $n_0$ should be, which is dependent on the choice of $d$.   We assume, by   induction, that
   \begin{equation}\label{248}
   \frac{1}{\alpha}-\beta\leq \frac{d}{\lambda_n}
   \end{equation}
 for some $n\geq n_0$. Then
\begin{equation}\label{249}
 \frac{1}{\alpha}-\beta\leq \frac{d}{\lambda_{n+1}}
\end{equation}
must hold.
It follows from \eqref{248} that
\begin{equation*}
 \mbox{RHS of }\eqref{245} \geq\left(\frac{1}{\alpha}-\beta\right)\left[5\beta\left(3-\frac{12}{\lambda_{n+1}}+\frac{32}{5\alpha\lambda_{n+1}} \right)-\frac{3\bar c_n}{\lambda_{n+1}} \frac{d}{\lambda_n} \right].
\end{equation*}
To ensure \eqref{249}, it suffices from  \eqref{246} to prove that
\begin{equation}\label{250}
5\beta\left(3-\frac{12}{\lambda_{n+1}}+\frac{32}{5\alpha\lambda_{n+1}} \right)-\frac{3\bar c_n}{\lambda_{n+1}} \frac{d}{\lambda_n} \geq \frac{100}{d}\beta
.
\end{equation}
For $n$ large, this requires
\begin{equation}\label{252}
   \frac{15d-100}{\alpha}\geq \frac{3d^2}{2}
\end{equation}
for $\alpha\in(\frac 12,\frac{473 + \sqrt{209329}}{1800})$. However,  there does not exist such a constant $d$ for   \eqref{252} to hold.   Hence, this method does not seem to yield the optimal constant $\alpha=\frac12$ for this problem.
\begin{remark}
 We also intend to replace  denominator  in  \eqref{248} and \eqref{249} by  $\lambda_n^t$ and $\lambda_{n+1}^t$, respectively, for some $ t>0$.
For this purpose we  only  need to  slightly modify  the  previous procedure.  After some calculations,  \eqref{250} becomes
 \begin{equation*}
5\beta\left(3-\frac{12}{\lambda_{n+1}}+\frac{32}{5\alpha\lambda_{n+1}} \right)-\frac{3\bar c_n}{\lambda_{n+1}} \frac{d}{\lambda_n^t} \geq \frac{100\lambda_n^{t-1}}{d}\beta
 \end{equation*}
Therefore, we need to show that  for  $\alpha\in(\frac 12,\frac{473 + \sqrt{209329}}{1800})$ and $n$ large,
\begin{equation*}
  \frac{1}{\alpha}\left[15d\lambda_n^{t+1}-100\lambda_n^{2t}\right]\geq \frac{3d^2}{2}\lambda_{n+1}\lambda_n +o_n(1),
\end{equation*}
 which suggests that $t=1$ is the best choice, since $\lambda_n\rightarrow\infty$ as $n\rightarrow\infty$.
\end{remark}
\par
\vskip3mm
 It is worth pointing out  that the inequality \eqref{key} ensured by Lemma \ref{132}  plays  an important role in the proof of Theorem \ref{main1}.  In view of \eqref{214},   we may want to estimate $M$ more accurately by considering
\begin{equation}\label{253}
 M=\max_{x\in[-1,1]}G'(x)=\frac{1}{\alpha}\left(1-\frac{4}{3\gamma}e^{4u}\right).
\end{equation}
To this purpose,  first, we observe, due to $G'(x)\leq M\leq\frac{1}{\alpha}$,  that
 \begin{equation}\label{254}
  -\frac{1}{\alpha}(1-x)\leq G(x)\leq \frac{1}{\alpha}(1+x),\quad x\in[-1,1].
 \end{equation}
 This leads to
 \begin{equation}\label{255}
    -\frac{1}{\alpha(1+x)}\leq u'(x)\leq \frac{1}{\alpha(1-x)},\quad x\in(-1,1).
 \end{equation}
 Assume, without loss of generality, $u(0)=0$. Then one has
 \begin{equation}\label{256}
   u(x)\geq -\frac{1}{\alpha}\ln2,\quad e^{4u}\geq2^{-\frac{4}{\alpha}}.
 \end{equation}
We next estimate $\gamma$ from above. Note that   $u=-\ln(1-ax) $ is a solution when $\alpha=1$. By some computations, we can see that \eqref{254}--\eqref{255} can not be improved, and there is {\it no}  apriori estimate for $\gamma$. However, if we assume
 \begin{equation}\label{257}
   1-\alpha\beta>0,
 \end{equation}
 then we can use \eqref{221} to estimate  $\gamma$.   Precisely,  take  a constant  $a$  such  that
\begin{equation}\label{258}
  1-a^2=c(1-\alpha\beta),\ c\in\left(0,\frac{4}{5}\right).
\end{equation}
 Then
\begin{equation}\label{259}
 \frac{4}{5}(1-\alpha\beta)\leq \frac{2}{\gamma}\int_0^a (1-x^2)(a^2-x^2)(1-x)^{-4/\alpha}+(1-a^2).
\end{equation}
  Since $\alpha\in(0.5,0.517)$,  one has
 \begin{equation*}
 \begin{aligned}
   I&:=\int_0^a (1-x^2)(a^2-x^2)(1-x)^{-4/\alpha}\\
   &\leq\int_0^a (1+x)(a^2-x^2)(1-x)^{-7}\\
  &=\frac{2a^2}{15}\left((1-a)^{-5}-1\right).
 \end{aligned}
\end{equation*}
It follows from \eqref{258} that  $a=\sqrt{1-c(1-\alpha\beta)}$ and so
\begin{equation*}
  \frac{1}{1-a}=\frac{1+a}{c(1-\alpha\beta)}\leq \frac{2}{c(1-\alpha\beta)}.
\end{equation*}
We further compute directly
\begin{equation*}
 \begin{aligned}
   I&\leq \frac{2}{15}(1-c(1-\alpha\beta))\left[\left(\frac{2}{c(1-\alpha\beta)}\right)^5-1\right]\\
   &\leq \frac{2^6[1-c(1-\alpha\beta)]}{15[c(1-\alpha\beta)]^5},
 \end{aligned}
\end{equation*}
which,  joint with \eqref{259},   lead to
\begin{equation}\label{260}
  \gamma\leq \left[(\frac{4}{5}-c)(1-\alpha\beta)\right]^{-1}\frac{2^7[1-c(1-\alpha\beta)]}{15[c(1-\alpha\beta)]^5}.
\end{equation}
We see from \eqref{253}, \eqref{256} and \eqref{260} that
\begin{equation}\label{262}
\begin{aligned}
   M&\leq \frac{1}{\alpha}\left[1- \frac{5c^5}{2^{13}}\left(\frac{4}{5}-c\right)\frac{(1-\alpha\beta)^6}{1-c(1-\alpha\beta)}\right]\\
&:=\frac{1}{\alpha}\left[1- B(\alpha,\beta)\right].
\end{aligned}
\end{equation}
We conclude from the above relations that
the upper bound of $M$ can be slightly improved in terms of $B(\alpha, \beta)>0$ by choosing, e.g.  $c=\frac25$, given $\beta \not =0$ and $1-\alpha \beta >0$. Thus, one has
\begin{equation}\label{B}
  B(\alpha,\beta)=\frac{1}{5^42^7} \frac{(1-\alpha\beta)^6}{3+2\alpha\beta}.
\end{equation}

We next will use some notations in Section 3 and  assume that
\begin{equation}\label{range1}
 0.5165\leq\alpha<0.51696.
\end{equation}
Note that
\begin{equation}\label{beta-1}
 \beta>h(0.5165)> 1.3375.
\end{equation}
It follows from \eqref{upbd} and \eqref{257} that
\begin{equation}\label{bd-1}
 1 >\alpha\beta>0.5165h(0.5165)>0.69057
\end{equation}
and
\begin{equation}\label{upbd-1}
   \frac{1}{\alpha}-\beta\leq 0.255  \beta.
\end{equation}
\par
 Let   $t=\frac{1}{\alpha}-\beta$. We now estimate the lower bound of $t$.
Noting that
\begin{equation*}
  f_n(\alpha)<f_6(0.5165)<-13.764,\quad \mbox{for }n\geq6,
\end{equation*}
we derive from \eqref{242} and \eqref{beta-1}  that
 \begin{equation*}
 \begin{aligned}
   \frac{5}{3}f_6(0.5165)&>\frac{5}{3}f_6(\alpha)\geq
   A_6t-\frac{B_6}{\beta} t^2> A_6t-\frac{B_6}{1.3375} t^2.
 \end{aligned}
 \end{equation*}
 Noting that $t>0$, a direct calculation indicates that
 \begin{equation}\label{t-lowbd1}
   t>0.253,
 \end{equation}
which  joint with \eqref{bd-1} and \eqref{upbd-1} suggests that
\begin{equation}\label{B-bd1}
\begin{aligned}
   7.85*10^{-10}&>\frac{0.255^6}{5^42^7(3+2*0.69057)}\\
   &\ge   B(\alpha,\beta)= \frac{1}{5^42^7} \frac{(1-\alpha\beta)^6}{3+2\alpha\beta}\\
 &>\frac{(0.5165*0.253)^6}{5^5 2^7}  >1.132*10^{-11}.
\end{aligned}
\end{equation}

\par
On the other hand,  we need to modify some inequalities  in Section $3$ by exploiting \eqref{262} instead of \eqref{21}. First,  the inequality  \eqref{218} becomes
\begin{equation*}
      \begin{aligned}
    \left(\frac{24}{\alpha}-12\right)&\int_{-1}^1 (1-x^2)G^2\geq \lfloor G\rfloor^2+ \left(6-\frac{4}{\alpha}(1-B)\right)\int_{-1}^1(1-x^2)^2(G')^2.
\end{aligned}
\end{equation*}
 Here,  $B$ denotes $B(\alpha,\beta)$. Similarly, we have
\begin{equation*}
     \lfloor G\rfloor^2\leq \left(\frac{4}{\alpha}(1-B)-6\right)\int_{-1}^1 |[(1-x^2)G]'|^2+\frac{16+8B}{\alpha}\int_{-1}^1 (1-x^2)G^2
\end{equation*}
and
\begin{equation}\label{264}
   \begin{aligned}
  \bar D&:=\sum_{k=3}^\infty\left[\lambda_k(\lambda_k+2)-\left(10-\frac{4+2B}{3\alpha}\right)(\lambda_k+2)-\frac{16+8B}{\alpha}
    \right]b_k^2\\
   &\leq\left(\frac{16-10B}{3\alpha}-16\right)\int_{-1}^1 |[(1-x^2)G]'|^2-\left[6\left(\frac{4+2B}{3\alpha}-6\right)-\frac{16+8B}{\alpha}\right]\frac{4\beta^2}{15} \\
    &\leq \frac{16\beta}{15}\left[
    \left(\frac{16-10B}{3\alpha}-16\right)\left(5-\frac{1}{\alpha}\right)+\left(9+\frac{2+B}{\alpha}\right)\beta\right].
\end{aligned}
\end{equation}
  From \eqref{223}, \eqref{230} and \eqref{264}, we derive  that
\begin{equation*}
  \begin{aligned}
&\quad \sum_{k=3}^n\left(\lambda_k-\lambda_{n+1}-\frac{4+2B}{15\alpha}\right) (\lambda_k+2)b_k^2+\left(\lambda_{n+1}- 10+\frac{4+2B}{5\alpha}\right)\left[\frac{16\beta}{15}\left(5-\frac{1}{\alpha}\right)-\frac{8}{5}\beta^2
    -\frac{8}{7}a_2^2\right]\\
    &=\sum_{k=3}^\infty\left[\lambda_k(\lambda_k+2)-\left(10-\frac{4+2B}{3\alpha}\right)(\lambda_k+2)-\frac{16+8B}{\alpha}
    \right]b_k^2\\
    &\leq\frac{16\beta}{15} \left[\left(9+\frac{2+B}{\alpha}\right)\beta
    +\left(\frac{16-10B}{3\alpha}-16\right)\left(5-\frac{1}{\alpha}\right)\right].
  \end{aligned}
\end{equation*}
One further obtains that
 \begin{equation}\label{265}
   \begin{aligned}
0&\leq\frac{16\beta}{15}\left(5-\frac{1}{\alpha}\right)\left(\frac{68-56B}{15\alpha}-6-\lambda_{n+1} \right)+\frac{8}{15}\beta^2
\left(3\lambda_{n+1}-12+\frac{32+16B}{5\alpha} \right) \\
&+\frac{8}{25}\left(\frac{1}{\alpha}-\beta\right)^2\left[7\left(\lambda_{n+1}- 10+\frac{4+2B}{5\alpha}\right)+ c_{n,B}\right],
  \end{aligned}
 \end{equation}
 where
\begin{equation*}
 c_{n,B}=\frac{1}{2}\lambda_{n+1}^2+\left(\frac{4+2B}{15\alpha}-14\right)\lambda_{n+1}+90-(n+16)\frac{4+2B}{15\alpha}.
\end{equation*}
 Note that \eqref{265}  is equivalent to
 \begin{equation*}
   \begin{aligned}
   0&\leq10\beta\left(5-\frac{1}{\alpha}\right)\left(\frac{68-56B}{15\alpha}-6-\lambda_{n+1} \right)+5\beta^2
\left(3\lambda_{n+1}-12+\frac{32+16B}{5\alpha} \right)+3\bar  c_{n,B}\left(\frac{1}{\alpha}-\beta\right)^2
\end{aligned}
 \end{equation*}
and
\begin{equation*}
 \bar c_{n,B}=\frac{1}{2}\lambda_{n+1}^2-7\lambda_{n+1}+20+\frac{4+2B}{15\alpha}\left(\lambda_{n+1}+5-n\right).
\end{equation*}
Therefore,
 \begin{equation}\label{B-ine}
   \begin{aligned}
    &\quad \frac{5\beta}{3}\left[(32B-8)\frac{1}{\alpha^2}+ (15\lambda_{n+1}+136-112B)\frac{1}{\alpha}-180-30\lambda_{n+1}\right]\\
&\geq\left(\frac{1}{\alpha}-\beta\right)\left[5\beta\left(3\lambda_{n+1}-12+\frac{32+16B}{5\alpha} \right)-3\bar c_{n,B}\left(\frac{1}{\alpha}-\beta\right)\right].
\end{aligned}
 \end{equation}
When $n=3$,  it follows from \eqref{upbd-1} that
 \begin{equation*}
  \begin{aligned}
 \mbox{RHS  of}  \, \eqref{B-ine}
   &=\left(\frac{1}{\alpha}-\beta\right)\beta\left[360+\frac{16(2+B)}{\alpha}-0.255\left(648+\frac{12(2+B)}{\alpha}\right)
   \right]\\
   &\geq\left(\frac{1}{\alpha}-\beta\right)\beta\left(194.76+\frac{12.94(2+B)}{\alpha}\right).
\end{aligned}
 \end{equation*}
  Combining \eqref{B-bd1},  \eqref{t-lowbd1}  and \eqref{B-ine},  we conclude
  \begin{equation*}
     \begin{aligned}
 &\quad 0.253\beta\left(194.76+\frac{12.94(2+1.132*10^{-11})}{\alpha}\right) <t\beta\left(194.76+\frac{12.94(2+B)}{\alpha}\right)\\
  &\leq \frac{5\beta}{3}\left[\frac{32B-8}{\alpha^2}+ \frac{556-112B}{\alpha}-1020\right]\\
  &\le \frac{5\beta}{3}\left[\frac{32*7.85*10^{-10}-8}{\alpha^2}+ \frac{556-112*1.132*10^{-11}}{\alpha}-1020\right].
\end{aligned}
  \end{equation*}
 Then one has
  \begin{equation*}
     \alpha<0.511.
  \end{equation*}
This is a contraction with \eqref{range1}. Consequently,
 \begin{equation*}
   \alpha< 0.5165.
 \end{equation*}
 We now assume that
\begin{equation}\label{range2}
 0.516\leq \alpha<0.5165.
\end{equation}
 Similarly, one has
 \begin{equation*}
   \frac{1}{\alpha}-\beta<0.2611\beta,\quad \beta>1.3341 \quad \mbox{ and }\quad t>0.2492.
 \end{equation*}
 Furthermore,
 \begin{equation*}
\begin{aligned}
   9.05*10^{-10}>  B(\alpha,\beta)>1.13*10^{-11}.
\end{aligned}
\end{equation*}
Then we find
 \begin{equation*}
 \frac{32*9.05*10^{-10}-8}{\alpha^2}+ \frac{556-112*1.13*10^{-11}}{\alpha}-1020-0.6*0.2492\left[190.8+ \frac{12.8(2+1.13*10^{-11})}{\alpha} \right]>0.
\end{equation*}
A direct calculation shows that
\begin{equation*}
  \alpha<0.512.
\end{equation*}
This  yields a contradiction.
\par
Repeating previous arguments, we can obtain a contradiction for the following ranges of $\alpha$:
\begin{equation*}
  \alpha\in[0.5155,0.516),\quad\alpha\in[0.515,0.5155), \quad \alpha\in[0.5145,0.515).
\end{equation*}
Hence, $\alpha<0.5145.$
\par
Unfortunately,    it does  not seem possible to improve the estimate of $\alpha$ significantly in this way and recursively, due to \eqref{B-bd1}, let alone to obtain the possible optimal constant $\alpha=\frac12$.

\vskip4mm
$\mathbf{Acknowledgement}$
This research is partially supported by  NSF grant DMS-1601885 and DMS-1901914. The first author would like to thank Xiaodong Wang for helpful conversations regarding the  Szeg\"o limit theorem and the functional invariance under the conformal transformation.


\begin{thebibliography}{}

 \bibitem{Beckner}
W. Beckner, Sharp Sobolev inequalities on the sphere and the Moser-Trudinger
inequality, {\em Ann.  Math. }138 (1993) 213-242.

 \bibitem{BCY}
 T. Branson, S-Y. A. Chang, P.C. Yang, Estimates and extremal problems for
the log-determinant on 4-manifolds, {\em  Commun. Math. Phys. }149 (1992) 241-262.



\bibitem{CY87}
S.-Y. A. Chang,  P.C. Yang, Prescribing Gaussian curvature on $ \mathbb{S}^2$, {\em Acta Math. } 159
(1987) 215-259.


\bibitem{Chang95}
S.-Y. A. Chang,  P.C. Yang, Extremal metrics of zeta function determinants on
4-manifolds, {\em Ann. Math. } 142 (1995) 171-212.

\bibitem{Chang97}
S.-Y. A. Chang,  P.C. Yang, On uniqueness of an n-th order differential equation in conformal geometry,
{\em Math. Res. Lett. } 4 (1997) 1-12.


\bibitem{CG}
S.-Y. A. Chang, C. Gui,  A Sharp Inequality on the Exponentiation of Functions  on the Sphere, preprint (2020).

\bibitem{CR71}
M. Crandall and P. Rabinowitz, Bifurcation from simple eigenvalues. {\em  J. Funct. Anal. } 8 (1971)
321-340.

\bibitem{DHL00}
Z. Djadli, E. Hebey, M. Ledoux, Paneitz-type operators and applications, {\em Duke Math. J. }  104(1) (2000) 129-169.

 \bibitem{Dj08}
 Z. Djadli, A. Malchiodi, Existence of conformal metrics with constant $Q$-curvature. {\em Ann.  Math. }  168 (2008) 813-858.


\bibitem{Gui98}
J. Feldman, R. Froese, N. Ghoussoub, C. Gui, An improved Moser-Aubin-Onofri
inequality for radially symmetric functions on $ \mathbb{S}^2$, {\em Cal. Var. Partial Differ. Equ. } 6 (1998)  95-104.


\bibitem{Gidas81}
B. Gidas,  J. Spruck,  Global and local behavior of positive solutions of nonlinear elliptic
equations, {\em Commun. Pure Appl. Math. } 34 (1981) 525-598.


\bibitem{GR}
I. S. Gradshteyn, I. M. Ryzhik, Table of Integrals, Series and Products. 7th ed.,
Academic Press, San Diego.

\bibitem{GM}
C. Gui,   A. Moradifam, The sphere covering inequality and its applications,
{\em Invent. Math. } 214(3) (2018) 1169-1204.

\bibitem{Gui-Wei}
C. Gui, J. Wei, On a sharp Moser-Aubin-Onofri inequality for functions on  $ \mathbb{S}^2$ with
symmetry, {\em Pac. J. Math. } 194(2) (2000) 349-358.


\bibitem{Hang}  F.B.  Hang,  A Remark On The Concentration Compactness Principle In Critical Dimension, (2020), preprint.

\bibitem{Hang16}
 F.B. Hang, P. Yang,  $Q$ curvature on a class of manifolds with dimension at least
5, {\em Commun. Pure Appl. Math. } 69(8) (2016)  1452-1491.

\bibitem{Hang163}
 F.B. Hang, P. Yang, $Q$ curvature on a class of $3$ manifolds, {\em Commun. Pure Appl.
Math. } 69 (4) (2016)  734-744.


\bibitem{KW74}
 J.L. Kazdan, F.W. Warner, Curvature functions for compact 2-manifolds, {\em Ann. Math. }99 (1974) 14-47.

\bibitem{KH12}
H. Kielh\"{o}fer, Bifurcation Theory, volume 156 of Applied Mathematical Sciences, Springer,
New York, Second edition, 2012, An Introduction with Application to Partial Differential
Equations.

 \bibitem{Ly19}
 Y.Y. Li, J.G. Xiong,   Compactness of conformal metrics with constant $Q$ curvature,
I. {\em Adv. Math. } 345 (2019) 116-160.

 \bibitem{Lin98}
 C.S. Lin, A classification of solutions of a conformally invariant fourth order equation
in $\R^n$, {\em  Comment. Math. Helv.} 73 (1998) 206-231.

\bibitem{M06}
A. Malchiodi, Compactness of solutions to some geometric fourth-order equations, {\em  J.
Reine Angew. Math. } 594 (2006) 137-174.

\bibitem{MM}
 M. Morimoto, Analytic Functionals on the Sphere, Translations of Mathematical
Monographs, No 178. AMS, first edition, 1998.

\bibitem{R}
 P.H. Rabinowitz,   Some global results for nonlinear eigenvalue problems,
{\em J. Funct.\ Anal. } 7 (1971) 487--513.

\bibitem{Schoen88}
R. Schoen,  The existence of weak solutions with prescribed singular behavior for a conformally invariant scalar equation, {\em Commun. Pure Appl. Math. } 41 (1988) 317-392.

\bibitem{tian19}
Y. Shi, J. Sun, G. Tian,  D. Wei,  Uniqueness of the Mean Field Equation and Rigidity of Hawking Mass,
{\em Calc. Var. Partial Differ. Equ.} (2019)  58:41.

\bibitem{Wei-Xu}
J. Wei, X. Xu, On Conformal Deformations of Metrics on $\mathbb{S}^n$, {\em  J.  Funct. Anal. }
  157 (1998)  292-325.

\bibitem{Wei99}
J. Wei, X. Xu, Classification of solutions of higher order conformally invariant equations, {\em Math.
Ann. } 313 (1999) 207-228.
\end{thebibliography}
\end{document}